\documentclass[12pt,a4paper]{article}
\usepackage[a4paper,top=2.5cm,bottom=2.5cm,left=2.5cm,right=2.5cm]{geometry}
\usepackage{amsmath}
 \usepackage{graphicx}
\usepackage{amsthm}
\usepackage{color}
\usepackage{fancyhdr}
\usepackage{pdfpages}
\usepackage{amssymb}
\usepackage{paralist}
\usepackage{extarrows}
\usepackage{setspace}
\usepackage{braket} 
\usepackage[colorlinks=true,citecolor=blue,linkcolor=blue,anchorcolor=blue,urlcolor=blue]{hyperref}
\linespread{1.0}
\setcounter{footnote}{-1}

\allowdisplaybreaks
\numberwithin{equation}{section}
\newtheorem{theorem}{Theorem}[section]
\newtheorem{lemma}[theorem]{Lemma}
\newtheorem{proposition}[theorem]{Proposition}
\newtheorem{corollary}[theorem]{Corollary}

\begin{document}
\title{The  energy identity of Sacks-Uhlenbeck operator and infinitely many solutions for  Brezis-Nirenberg problem}
\author{Fei Fang\footnote{E-mail addresses: fangfei68@163.com}  \\  \footnotesize  \emph{Department of Mathematics,Beijing Technology and Business University, Beijing 100048, China}}

\maketitle
\noindent \textbf{\textbf{Abstract:}} Let $\Omega$ be a bounded smooth  domain in $\mathbb{R}^N$ with $N\geq 3$, $1<\alpha$, $2^{\ast}=\frac{2N}{N-2}$ and
$\{u_\alpha\}\subset H_{0}^{1,2\alpha}(\Omega)$   be a critical point of the functional
\begin{equation*}
I_{\alpha,\lambda}(u)=\frac{1}{2\alpha}\int\limits_{\Omega} [(1+|\nabla u|^2)^{\alpha}-1 ]dx-\frac{\lambda}{2}\int\limits_{\Omega}u^2dx-\frac{1}{2^{\ast}}\int\limits_{\Omega}|u|^{2^{\ast}}dx.
\end{equation*}
In this paper, we obtain  the limit behaviour of $u_\alpha$ ( $\alpha\rightarrow 1$),  energy identity, Pohozaev identity, some integral estimates, etc. And using these results, we prove
infinitely many solutions for the following  Brezis-Nirenberg problem for $N\geq 7$:
\begin{equation*}
\left\{
\begin{aligned} 
 &-\Delta u=|u|^{2^{\ast}-2}u+\lambda u\ \ \ \mbox{in}\ \Omega,\\
&u=0,\ \  \mbox{on}\ \partial\Omega.
\end{aligned}
\right.
\end{equation*}

\noindent  \textbf{Keywords:} Critical growth, Energy identity, Infinitely many solutions, Brezis-Nirenberg problem.

\section{Introduction}
We know that the energy of harmonic maps does not satisfy the Palais-Smale condition (see \cite{wang1,wang2,zhu1,rsa}).  So, from the viewpoint of calculus
of variation, it is difficult to show the existence of harmonic maps from a surface. In order
to obtain harmonic maps,   Sacks and Uhlenbeck  in \cite{rsa} introduced the so called $\alpha$-energy $E_{\alpha}$
instead of $L^2$ energy $E$ as the following

$$E_{\alpha}=\frac{1}{2\alpha}\int\limits_{\sum} [(1+|\nabla u|^2)^{\alpha}-1 ]dV_g,$$
where $\alpha>1$, $(\sum, g)$ is a Riemann surface, $(N, h)$ is an $n$-dimensional smooth compact Riemannian
manifold which is embedded in $\mathbb{R}^k$ and $u$ is a map  between $\sum$ and $N$.
Using $\alpha$-energy $E_{\alpha}$  Sacks and Uhlenbeck proved that there is a sequence such
that $u_{\alpha}$ converges to a harmonic map $u_1$  outside a finite set of points $X$, as  $\alpha\rightarrow1$.
And the  energy identity of a sequence of $u_{\alpha}$ was consedered in \cite{wang1,wang2,zhu1}.

 Motivated by the ideas of   Sacks and Uhlenbeck, we consider following boundary value
 problem
\begin{equation} \label{P}
\left\{
\begin{aligned} 
 &-\mbox{div}((1+|\nabla u|^2)^{\alpha-1}\nabla u)=|u|^{2^{\ast}-2}u+\lambda u\ \ \ \mbox{in}\ \Omega\\
&u=0,\ \  \mbox{on}\ \partial\Omega,
\end{aligned}
\right.
\end{equation}
where $\alpha>1$, $2^{\ast}=\frac{2N}{N-2}$ and $\Omega$ is a bounded smooth domain in $\mathbb{R}^N$. We call
the nondegenerate operator $-\mbox{div}((1+|\nabla \cdot|^2)^{\alpha-1}\nabla \cdot)$ Sacks-Uhlenbeck operator.
 The energy functional of problem \eqref{P} is
\begin{equation*}
I_{\alpha,\lambda}(u)=\frac{1}{2\alpha}\int\limits_{\Omega} [(1+|\nabla u|^2)^{\alpha}-1 ]dx-\frac{\lambda}{2}\int\limits_{\Omega}u^2dx-\frac{1}{2^{\ast}}\int\limits_{\Omega}|u|^{2^{\ast}}dx, \ \  u\in  H_{0}^{1,2\alpha}(\Omega)
\end{equation*}
and  limit problem of  \eqref{P}  is the well-known Brezis-Nirenberg problem
\begin{equation}\label{app18}
\left\{
\begin{array}{ll}
-\Delta u=|u|^{2^{\ast}-2}u+\lambda u, &\mbox{in}\ \Omega,    \\
u= 0,  &\mbox{on}\  \partial\Omega.
\end{array}
\right.
\end{equation}
Since the embedding $H_{0}^{1,2\alpha}(\Omega)\hookrightarrow L^{2^{\ast}}(\Omega)$
is compact,  we easily show that the functional $I_{\alpha,\lambda}(u)$ satisfies the Palais-Smale condition, then
by symetric Mountain Pass theorem (see \cite{rab}), the functional $I_{\alpha,\lambda}(u)$ has infinitely many
critical points $u_{\alpha,k}, k=1,2,\cdots.$.

Our aim is  to obtain the limit behaviour of $u_{\alpha,k}$ ( $\alpha\rightarrow 1$),  energy identity, Pohozaev identity,  some integral estimates, etc.
And using these results, we prove
infinitely many solutions for  Brezis-Nirenberg problem for $N\geq 7$.

Our  first  result is  the  following  energy identity,:

\begin{theorem}\label{p21}
  Let $\{u_\alpha\}$ be a critical point  of the functional $I_{\alpha}$, and  $u_\alpha\rightharpoonup u$ in $H_{0}^{1}(\Omega)$ as $\alpha\rightarrow 1$. Then there exists a nonnegative integer $k$, $x_{\alpha,j}\in \Omega$, $R_{\alpha,j}\in \mathbb{R}_{+}$, $U_j\in \mathcal{D}^{1,2}(\mathbb{R}^{N})$ such that
  \begin{equation}\label{e8}
    \left\|u_\alpha-u-\sum_{j=1}^k(R_{\alpha,j})^{\frac{N-2}{2}}U_{j}\left(R_{\alpha,j}(x-x_{\alpha,j})\right)\right\|=o(1),
  \end{equation}
\begin{equation}\label{e9}
    I_{\alpha,\lambda}(u_\alpha)=I_{\alpha,\lambda}(u)+\sum_{j=1}^{k} I_{1,0}(U_j, \mathbb{R}^{N})+o(1),
  \end{equation}

  \begin{equation}\label{e10}
R_{\alpha,j} \emph{dist} (x_{\alpha,j},\partial \Omega)\rightarrow +\infty,
  \end{equation}
 where $U_j$ is a positive solution of

 \begin{equation}
\begin{array}{ll}
-\Delta u=u^{2^{\ast}-1}, &\mbox{in}\  \mathbb{R}^N
\end{array}
  \label{P3}
\end{equation}
 and
 \begin{equation}\label{e100}
  I_{1,0}(u, \mathbb{R}^{N})=\frac{1}{2}\int\limits_{\mathbb{R}^{N}} |\nabla u|^2 dx-\frac{1}{2^{\ast}}\int\limits_{\mathbb{R}^{N}}|u|^{2^{\ast}}dx.
\end{equation}
 \end{theorem}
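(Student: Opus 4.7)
The plan is to adapt Struwe's global compactness decomposition for the critical Brezis--Nirenberg problem to the present $\alpha$-regularized setting, treating the loss of compactness at the critical Sobolev exponent by an iterated blow-up and rescaling scheme.

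First I would verify that the weak limit $u$ is a weak solution of the limit problem \eqref{app18}. From the uniform $H^{1}_{0}$-bound on $\{u_\alpha\}$ (which follows from the Mountain-Pass energy control) together with $\alpha-1\to 0$, one checks that $(1+|\nabla u_\alpha|^2)^{\alpha-1}\nabla u_\alpha \rightharpoonup \nabla u$ in $L^{2}_{\mathrm{loc}}(\Omega)$, while the subcritical term $\lambda u_\alpha$ converges strongly in $L^2$ and the critical term $|u_\alpha|^{2^\ast-2}u_\alpha$ is handled through the a.e.\ convergence combined with Rellich and a truncation/dominated-convergence argument. This yields $-\Delta u=|u|^{2^\ast-2}u+\lambda u$ in $\Omega$ with $u=0$ on $\partial\Omega$.

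Next set $v_\alpha:=u_\alpha-u$. If $\|\nabla v_\alpha\|_{2}\to 0$ I take $k=0$. Otherwise I run a concentration-compactness procedure of Lions/Struwe type on the concentration function $Q_\alpha(r):=\sup_{x}\int_{B_{r}(x)}|v_\alpha|^{2^\ast}dx$, choosing $R_{\alpha,1}>0$ and $x_{\alpha,1}\in\Omega$ so that the rescaled profile
\[
w_\alpha(y):=R_{\alpha,1}^{-(N-2)/2}\,v_\alpha\bigl(x_{\alpha,1}+R_{\alpha,1}^{-1}y\bigr)
\]
carries a fixed positive amount of $L^{2^\ast}$-mass on the unit ball. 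Then $\{w_\alpha\}$ is bounded in $\mathcal{D}^{1,2}(\mathbb{R}^{N})$ and, up to a subsequence, $w_\alpha\rightharpoonup U_{1}\neq 0$. Writing the Euler--Lagrange equation for $u_\alpha$ in the rescaled $y$-variable produces
\[
-\mathrm{div}\!\left((1+R_{\alpha,1}^{\,4-N}|\nabla w_\alpha|^{2})^{\alpha-1}\nabla w_\alpha\right)=|w_\alpha|^{2^\ast-2}w_\alpha+\lambda R_{\alpha,1}^{-2}w_\alpha,
\]
and since $\alpha\to 1$, $R_{\alpha,1}\to\infty$, the prefactor tends to $1$ and $\lambda R_{\alpha,1}^{-2}\to 0$; hence $U_{1}$ is a nontrivial weak solution of $-\Delta U=|U|^{2^\ast-2}U$ on $\mathbb{R}^{N}$ (which, after choosing a sign, is an Aubin--Talenti bubble and thus may be taken positive). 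The boundary statement \eqref{e10} is obtained by contradiction: if $R_{\alpha,1}\,\mathrm{dist}(x_{\alpha,1},\partial\Omega)$ stayed bounded, a further translation shows that $U_{1}$ would live in a half-space with zero Dirichlet datum, and the Pohozaev identity on the half-space forces $U_{1}\equiv 0$.

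Finally I iterate: subtract the first bubble from $v_\alpha$ and repeat the construction on the new remainder. Each extracted bubble has Dirichlet energy at least $\tfrac{1}{N}S^{N/2}$, so since $I_{\alpha,\lambda}(u_\alpha)$ is uniformly bounded the process terminates after a finite number $k$ of steps. At every step a Brezis--Lieb-type splitting is needed: for the Sobolev term this is classical, while for the $\alpha$-energy one uses the expansion $\tfrac{1}{2\alpha}[(1+|\nabla u|^2)^{\alpha}-1]=\tfrac{1}{2}|\nabla u|^{2}+O(\alpha-1)(1+|\nabla u|^{2\alpha})$ together with the fact that the successive bubbles live on asymptotically separated scales or locations to eliminate interaction terms. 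The main obstacle is precisely this last step: reconciling the non-quadratic, $\alpha$-dependent leading part of the energy with the scale invariance of the blow-up analysis, and showing that cross-terms between distinct profiles and the $\alpha$-perturbation vanish in the limit. Once that is under control, \eqref{e8} and \eqref{e9} follow by induction on $k$.
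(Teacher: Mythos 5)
Your proposal follows essentially the same Struwe-type global compactness route as the paper: select a scale $R_{\alpha}$ and center $x_{\alpha}$ via a concentration function, rescale to extract a bubble limit, use the Pohozaev identity on the half-space to rule out boundary concentration, and iterate, with termination of the iteration forced by the quantized energy $\tfrac{1}{N}S^{N/2}$ of each bubble. One small computational slip worth flagging: with your convention $w_{\alpha}(y)=R_{\alpha,1}^{-(N-2)/2}v_{\alpha}(x_{\alpha,1}+R_{\alpha,1}^{-1}y)$ one has $|\nabla_{x}v_{\alpha}|^{2}=R_{\alpha,1}^{N}|\nabla_{y}w_{\alpha}|^{2}$, so the rescaled prefactor is $(1+R_{\alpha,1}^{N}|\nabla w_{\alpha}|^{2})^{\alpha-1}$ rather than $(1+R_{\alpha,1}^{4-N}|\nabla w_{\alpha}|^{2})^{\alpha-1}$; this does not change the structure of the argument, but the claim that this prefactor tends to $1$ requires $R_{\alpha,1}^{N(\alpha-1)}\to 1$ (a compatibility condition between the rates $\alpha\to 1$ and $R_{\alpha,1}\to\infty$), a point the paper itself handles only implicitly.
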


As an application, we use Theorem \eqref{p21} to  prove the existence of  infinitely many solutions for the   well-known Brezis-Nirenberg  problem:
\begin{theorem}\label{rrr}
Suppose that $\lambda >0,N \geq 7$. Then (\ref{app18}) has infinitely many solutions.
\end{theorem}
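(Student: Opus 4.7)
The plan is to deduce Theorem \ref{rrr} from the regularised problem \eqref{P} by letting $\alpha\to 1^+$ and using Theorem \ref{p21} as the key compactness tool, in the spirit of the Devillanova--Solimini approach to Brezis--Nirenberg. For each fixed $\alpha>1$ the even functional $I_{\alpha,\lambda}$ satisfies the Palais--Smale condition on $H_0^{1,2\alpha}(\Omega)$, because $H_0^{1,2\alpha}(\Omega)\hookrightarrow L^{2^{\ast}}(\Omega)$ is compact, so the symmetric mountain pass theorem produces an unbounded sequence of symmetric minimax critical values $c_{\alpha,1}\le c_{\alpha,2}\le\cdots\to\infty$ realised by critical points $u_{\alpha,k}$.

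To pass to the limit I first need $\alpha$-uniform bounds. Fix a symmetric $k$-dimensional subspace $V_k\subset C_c^\infty(\Omega)$ that realises the $k$-th symmetric minimax level $c_k$ of the limit functional $I_{1,\lambda}$; since $\tfrac{1}{2\alpha}[(1+|\xi|^2)^\alpha-1]\to\tfrac12|\xi|^2$ uniformly on bounded sets as $\alpha\to 1^+$, testing the minimax construction against $V_k$ gives $\limsup_{\alpha\to 1^+}c_{\alpha,k}\le c_k$. Together with the Pohozaev identity and integral estimates provided earlier in the paper, this yields a uniform $H_0^1$-bound $\|u_{\alpha,k}\|_{H_0^1}\le M_k$. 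Fix $k$ and extract $\alpha_n\to 1^+$ with $u_{\alpha_n,k}\rightharpoonup u^{(k)}$ weakly in $H_0^1(\Omega)$; the weak limit satisfies \eqref{app18} by standard test-function arguments, and Theorem \ref{p21} exhibits the defect as $m_k\in\mathbb{Z}_{\ge 0}$ rescaled Aubin--Talenti bubbles, each contributing $\tfrac{1}{N}S^{N/2}$ to the energy, giving
\begin{equation*}
c_k = I_{1,\lambda}\bigl(u^{(k)}\bigr) + \tfrac{m_k}{N}S^{N/2}.
\end{equation*}

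The main obstacle is to prove that $\{u^{(k)}\}_{k\ge 1}$ contains infinitely many distinct functions, since a priori all of the diverging energy $c_k\to\infty$ could be absorbed by a growing number of bubbles while the weak limits remain bounded (or even vanish identically). Here one invokes the Pohozaev identity and integral estimates for $u_{\alpha,k}$ established earlier in the paper, applied together with the bubble decomposition \eqref{e8}, to obtain a quantitative inequality of the form
\begin{equation*}
\lambda\int_\Omega\bigl(u^{(k)}\bigr)^2\,dx \ \ge\ A\,c_k\ -\ B\,m_k S^{N/2},
\end{equation*}
with positive dimensional constants $A,B$ depending only on $N$ and $\Omega$. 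The hypothesis $N\ge 7$ enters precisely here: it is the regime where each bubble carries a negligible $L^2$-mass compared with its critical energy (informally $\int U_j^2\sim R_{\alpha,j}^{-2}\to 0$ already for $N\ge 5$, with the sharper quantitative gap required to defeat the bubble contribution available only for $N\ge 7$). Since $m_k S^{N/2}\le N\,c_k$ grows at most linearly in $c_k$, the displayed lower bound forces $\|u^{(k)}\|_{L^2}^2\to\infty$, hence $I_{1,\lambda}\bigl(u^{(k)}\bigr)\to\infty$, producing infinitely many distinct classical solutions of \eqref{app18} and completing the proof of Theorem \ref{rrr}.
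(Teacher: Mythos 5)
The core of your proposal is the displayed inequality
\begin{equation*}
\lambda\int_\Omega\bigl(u^{(k)}\bigr)^2\,dx \ \ge\ A\,c_k\ -\ B\,m_k S^{N/2},
\end{equation*}
and this is where the argument breaks down: it is asserted, not derived, and it does not follow from anything in the paper. The paper's use of the Pohozaev identity is fundamentally different in structure. In Section 3.3, the Pohozaev identity is applied \emph{locally} on a shrinking ball $B_\alpha = B_{t_\alpha R_\alpha^{-1/2}}(x_\alpha)\cap\Omega$ around a (hypothetical) concentration point, and the integral estimates of Propositions \ref{p31}, \ref{p36}, \ref{p37} bound the \emph{boundary} terms by $CR_\alpha^{-(N-2)/2}$ (see \eqref{a48}--\eqref{a52}), while the bubble decomposition \eqref{a53} forces the interior term $\lambda\int_{B_\alpha}u_\alpha^2$ to be at least $cR_\alpha^{-2}$ (see \eqref{a54}--\eqref{a63}). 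Since $-2 > -(N-2)/2$ exactly when $N>6$, this is a contradiction: for $N\geq 7$ \emph{there are no bubbles at all}, i.e.\ $m_k=0$ and $u_{\alpha,k}\to u_k$ strongly. Your proposal instead tries to coexist with bubbles ($m_k>0$) and defeat them by a global $L^2$ lower bound; no such bound is produced by the paper's machinery, and even granting the form you wrote, the conclusion $\|u^{(k)}\|_{L^2}\to\infty$ would need $A>BN$ (since $m_kS^{N/2}\le Nc_k$), which you do not justify. Your informal remark that the bubble $L^2$-mass scales like $R_{\alpha,j}^{-2}$ is correct, but it is precisely the ingredient the paper uses to make the \emph{interior} side of the local Pohozaev identity large and hence derive the contradiction --- it does not yield a global inequality of the type you want.

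There is a second gap: once bubbling is excluded and one knows $c_{k,\alpha}\to c_k$, it is still possible a priori that the limit levels $c_k$ stabilize, $c_k=c$ for all $k\geq m$. Your proposal sidesteps this by hoping $I_{1,\lambda}(u^{(k)})\to\infty$, but that is exactly what the (unjustified) inequality was supposed to give. The paper instead handles this carefully in the second part of the proof of Theorem \ref{rrr}: it splits into Case~I (levels strictly increase along a subsequence, done) and Case~II (levels eventually constant), and in Case~II runs a genus/deformation argument on $K_c=\{u:\ I_{1,\lambda}'(u)=0,\ I_{1,\lambda}(u)=c\}$ to show $\gamma(K_c)\geq 2$, using an odd homeomorphism $\eta$ deforming $K_\alpha^{c+\delta}\setminus(\mathcal{K}\cap H_0^{1,2\alpha})$ into $K_\alpha^{c-\delta}$ and a contradiction with the definition of $c_{k,\alpha}$. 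This genus step is not a decoration; it is what turns ``strong convergence at each fixed level'' into ``infinitely many distinct solutions'', and your proposal does not replace it.

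In short: the paper's proof has two load-bearing pieces (exclusion of concentration via a local Pohozaev identity on safe regions, then a genus argument for the case of constant minimax levels), and your proposal replaces both with a single inequality that is neither derived nor plausible from the stated tools. To fix it you should (i) replace the global inequality by the paper's contradiction argument, localized at the bubble of slowest concentration rate, to show $m_k=0$ when $N\ge 7$; and (ii) add the genus/deformation argument to handle the possibility $c_k\equiv c$ for large $k$.
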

We know that the functional corresponding
to (\ref{app18}) does not satisfy the Palais-Smale condition at large energy level. So it is
impossible to apply the Mountain Pass lemma directly to obtain the existence of solutions for (\ref{app18}).
The pioneering paper on problem \eqref{e100} was by
Br\'{e}zis-Nirenberg \cite{r8} in 1983 where the authors showed that for $N \geq 4$ and $\lambda\in (0, \lambda_1)$
problem \eqref{e100} has at least one positive solution, where $\lambda_1$ denotes the principal
eigenvalue of $-\Delta$ on $\Omega$.
The same conclusion was proved in  \cite{r8}  for $N = 3$ when
$\Omega$ is a ball and $\lambda \in (\frac{\lambda_1}{4}, \lambda_1)$. In this case, by using the Pohozaev
identity, equation \eqref{e100} has no radial solution when $\lambda \in (0,\frac{\lambda_1}{4})$.
Note that, using the Pohozaev
identity,  \eqref{e100}   has no nontrivial solution when $\lambda \leq 0$ and
$\Omega$ is star-shaped.

Since 1983, there has been a considerable number of papers on
problem \eqref{app18}. Let us now briefly enumerate the multiplicity  results obtained to date  as follows:
\begin{itemize}\addtolength{\itemsep}{-1.5 em} \setlength{\itemsep}{-5pt}
\item[(1)]
Cerami et al. in \cite{cer} proved that
the number of solutions of \eqref{app18} is bounded below by the number of eigenvalues of
$(-\Delta, \Omega)$ lying in the open interval $(\lambda,\lambda+S|\Omega|^{-\frac{2}{N}})$,
where $S$ is the best constant
for the Sobolev embedding $D^{1,2}(\mathbb{R}^N) \rightarrow  L^2(\mathbb{R}^N)$
and $|\Omega|$ is the Lebesgue measure
of $\Omega$.

\item[(2)]  If $N \geq  4$ and $\Omega$ is a ball,
then for any $\lambda > 0$,  infinitely
many sign-changing solutions which were built using particular symmetries of the
domain $\Omega$ were obtained by  Fortunato and Jannelli (see  \cite{for}).

\item[(3)]
In Cerami et al.\cite{cer1}
 it was proved for $N \geq 6$, that \eqref{app18} has two pairs of solutions on any smooth
bounded domain.

\item[(4)]
 Using  Pohozaev identity and  the global compact result Devillanova and Solimini \cite{r3} showed that, if $N \geq 7$, problem
\eqref{app18} has infinitely many solutions for each $\lambda > 0$.
For low dimensions,
that is, $N = 4, 5, 6$, in \cite{devv}, Devillanova and
Solimini proved the existence of at least $N +1$ pairs of solutions provided $\lambda$ is small
enough. In \cite{cla}, Clapp and Weth extended this last result to all $\lambda >0$.

\item[(5)]
Schechter and Zou \cite{sch} showed that in any bounded and
smooth domain, for $N \geq  7 $ and for each fixed $\lambda > 0$, problem \eqref{app18}  has infinitely
many sign changing solutions
\end{itemize}

Using the methods of   \cite{r3},  Cao  et al. in \cite{r9} obtained infinitely many solutions
for  the semilinear elliptic equations involving Hardy potentials and critical Sobolev exponents for $N\geq 7$,
The result of the existence of the infinitely many solutions  was also extended
to  $p$-Laplacian equation ($1 < p < \infty$)  with critical growth for $N > p^2 + p$ in \cite{cao}.

In \cite{r3}, the  well known  global compactness result which gives
a complete description of the  noncompact (P.S.)c sequence
for all energy levels $c$ of    the functional
was used to obtain
 to prove the existence of the infinitely many solutions for \eqref{app18}.
The global compactness result was firstly obtained for  Brezis-Nirenberg problem by M. Struwe \cite{r12}.
For $p$-Laplacian case,    C. Mercuri and  M. Willem  \cite{r13} obtained the global compactness result for all $1<p<N$,
And  the result was proved in \cite{r14} for  singular elliptic problem.  When $\alpha\rightarrow 1$, the solution $u_{\alpha}$
of problem \eqref{P} converges weakly  to $u$, and $u$ is a solution of \eqref{app18}.
Our Theorem \ref{p21} describes  the limit behaviour of $u_\alpha$, and is similar to the global compactness result.

Note that  Theorem \ref{rrr} is similar to the result which was obtained by Devillanova and Solimini \cite{r3}. In \cite{r3},
they firstly  considered the following approximation problem
\begin{equation}\label{app180}
\left\{
\begin{array}{ll}
-\Delta u=|u|^{2^{\ast}-2-\varepsilon}u+\lambda u, &\mbox{in}\ \Omega,    \\
u= 0,  &\mbox{on}\  \partial\Omega,
\end{array}
\right.
\end{equation}
where $\varepsilon>0$, and then, they set up  the  global compactness result, Pohozaev identity, some integral estimates for the approximation problem. 
So they
used  these results to prove  infinitely many solutions  for  Brezis-Nirenberg problem for $N\geq 7$.
In this work,  our problem \eqref{P} can be regarded as  the  approximation problem of    Brezis-Nirenberg problem. So using
the methods of  \cite{r3}, we can also obtain infinitely many solutions   for  Brezis-Nirenberg problem for $N\geq 7$. Since
 the two  approximation problems are different, the two sets  of infinitely many solutions may also be different.
 In a forthcoming paper, we will prove the conclusion.

Let $P(t)=\frac{1}{2\alpha}\left((1+t^2)^{\alpha}-1\right)$, then  $p(t)=P'(t)=(1+t^2)^{\alpha-1}t$
and  $P(t)$ and $p(t)$ satisfy the following inequalities
\begin{align}\label{app17}
&(1):\ 2\leq \frac{tp(t)}{P(t)}\leq 2\alpha,  \ \mbox{for}\  t>0, \nonumber \\
&(2):\ t^{2\alpha}P(u)<P(tu)\leq t^2P(u), \ \ \mbox{if} \ \  0\leq t\leq 1, \\
&(3): \ t^2P(u)\leq  P(tu)\leq t^{2\alpha}P(u), \ \ \mbox{if}  \ \ \ \  t\geq 1.
 \qquad \qquad \qquad\qquad\qquad\qquad\qquad \qquad\quad\qquad\nonumber
\end{align}
Since the operator   $-\mbox{div}((1+|\nabla \cdot|^2)^{\alpha-1}\nabla \cdot)$
is  inhomogeneous, \eqref{app17} is  used to overcome the
nonhomogeneous difficulty.

 \section{Proof of Theorem \ref{p21}}
The Hilbert space $D^{1,2}(\mathbb{R}^{N})$
is the completion of the space  $C_0^{\infty}(\mathbb{R}^{N})$
with respect to the norm
\begin{equation}\label{e3}
  \|u\|_{\mathcal{D}^{1,2}(\mathbb{R}^{N})}=\left\{\int\limits_{\mathbb{R}^{N}}|\nabla u|^2dx\right\}^{\frac{1}{2}}.
\end{equation}
The well-known  Sobolev inequalities  state that for  all $u\in \mathcal{D}^{1,2}(\mathbb{R}^{N})$,
\begin{equation}\label{e4}
  \left(\int\limits_{\mathbb{R}^{N}}|u(x)|^{\frac{2N}{N-2}}dx\right)^{\frac{N-2}{2N}}\leq C\left(\int\limits_{\mathbb{R}^{N}} |\nabla u(x)|^2dx,
  \right)^{\frac{1}{2}}
\end{equation}
where $C$ depends only on $N$.

Set

\begin{equation}\label{e400}
  S_0=\inf\left\{\frac{\int\limits_{\mathbb{R}^{N}} |\nabla u(x)|^2dx}{\left(\int\limits_{\mathbb{R}^{N}}|u(x)|^{2^{\ast}}\right)^{\frac{2}{2^{\ast}}}}: w\in \mathcal{D}^{1,2}(\mathbb{R}^{N})\right\}.
\end{equation}
It is well-known that $S_0$ is attained by the extremal functions
\begin{equation}\label{e5}
  U_{\varepsilon}(x)= \left(\frac{\varepsilon}{x^2+\varepsilon^2}\right)^{\frac{N-2}{2}},
\end{equation}
where $\varepsilon>0$ is arbitrary.

Now using the proof of the  concentration-compactness principle in Orlicz space in \cite{r122},   we can  prove the following lemma:

\begin{lemma}\label{l21}
 Let $\{u_\alpha\}$ be a bounded sequence in  $\mathcal{D}^{1,2}(\mathbb{R}^{N})$. We may assume $u_\alpha$ converges a.e. $u\in \mathcal{D}^{1,2}(\mathbb{R}^{N})$,
 $(1+|\nabla u_{\alpha}|^2)^{\alpha-1}|\nabla u_\alpha|^2$, $u_\alpha^{2^{\ast}}$ converges weakly to some bounded, nonnegative measures $\mu$, $\nu$ on $\mathbb{R}^{N}$, as $\alpha\rightarrow 1$.
  \begin{compactitem}
  \item[\quad\emph{(1):}]  Then we have for some at most countable family $J$, for some families $\{x_j\}_{j\in J}$ for distinct points in $\mathbb{R}^{N}$, $\{\nu_j\}_{j\in J}$ in $(0, \infty)$

      \begin{equation}\label{e6}
        \nu=|u|^{2^{\ast}}+\sum_{j\in J}\nu_j\delta_{x_j},
      \end{equation}
       \begin{equation}\label{e7}
        \mu\geq |\nabla u|^2+\sum_{j\in J} S_0\nu_j^{\frac{2}{2^{\ast}}}\delta_{x_j}.
      \end{equation}
   \item[\quad\emph{(2):}] If $\mu\equiv0$ and $\mu(\mathbb{R}^{N})\leq S_0\nu(\mathbb{R}^{N})^{2/2^{\ast}}$, then $J=\{x_0\}$ for some
   $x_0\in \mathbb{R}^{N}$ and $\nu=c_0\delta_{x_0}$, $\mu=S_0c_0^{2/2^{\ast}}\delta_{x_0}$ for some $c_0>0$.
   \end{compactitem}
\end{lemma}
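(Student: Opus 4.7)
I would adapt the second concentration-compactness principle of P.-L.~Lions to the present $\alpha$-dependent density, following the Orlicz-space version cited in~\cite{r122}.  The key observation is the pointwise inequality $(1+|\nabla u_\alpha|^2)^{\alpha-1}\geq 1$ for $\alpha>1$, which lets one dominate any weak-$*$ cluster point of the ordinary Dirichlet density $|\nabla u_\alpha|^2$ by $\mu$ and thereby reduce to the classical setting.

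First I would set $v_\alpha:=u_\alpha-u$, so that $v_\alpha\rightharpoonup 0$ in $\mathcal{D}^{1,2}(\mathbb{R}^N)$ and, by Rellich--Kondrachov, $v_\alpha\to 0$ in $L^2_{\mathrm{loc}}$.  The Brezis--Lieb lemma applied to $\{u_\alpha\}$ yields $|u_\alpha|^{2^{\ast}}-|v_\alpha|^{2^{\ast}}\to|u|^{2^{\ast}}$ in $L^1_{\mathrm{loc}}$, hence $\nu=|u|^{2^{\ast}}+\tilde\nu$ where $\tilde\nu$ is the weak-$*$ limit of $|v_\alpha|^{2^{\ast}}$.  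Similarly, weak $L^2$-convergence of $\nabla v_\alpha$ makes the cross term $\nabla v_\alpha\cdot\nabla u$ vanish against any test function, so $|\nabla u_\alpha|^2\rightharpoonup|\nabla u|^2+\tilde\mu$ as measures for some nonnegative defect $\tilde\mu$, and the pointwise bound above upgrades this to $\mu\geq|\nabla u|^2+\tilde\mu$.

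The heart of the argument is to test Sobolev against $\phi v_\alpha$ for $\phi\in C_c^{\infty}(\mathbb{R}^N)$:
\begin{equation*}
S_0\left(\int_{\mathbb{R}^N}|\phi|^{2^{\ast}}|v_\alpha|^{2^{\ast}}\,dx\right)^{2/2^{\ast}}\leq\int_{\mathbb{R}^N}|\nabla(\phi v_\alpha)|^2\,dx;
\end{equation*}
expanding the right-hand side, the two terms containing $v_\alpha^2|\nabla\phi|^2$ and $\phi v_\alpha\nabla\phi\cdot\nabla v_\alpha$ are $o(1)$ by the strong $L^2_{\mathrm{loc}}$ convergence of $v_\alpha$, so in the limit
\begin{equation*}
S_0\left(\int|\phi|^{2^{\ast}}\,d\tilde\nu\right)^{2/2^{\ast}}\leq\int|\phi|^2\,d\tilde\mu\leq\int|\phi|^2\,d\mu.
\end{equation*}
The standard reverse-H\"older/atomic-decomposition argument of Lions then forces $\tilde\nu=\sum_{j\in J}\nu_j\delta_{x_j}$ with $J$ at most countable and $\tilde\mu\geq S_0\sum_j\nu_j^{2/2^{\ast}}\delta_{x_j}$, which combined with the two decompositions above yields (\ref{e6}) and (\ref{e7}).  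For part (2) (which I read with ``$u\equiv 0$'' in place of the apparent typo ``$\mu\equiv 0$''), the global reverse inequality $\mu(\mathbb{R}^N)\geq S_0\sum_j\nu_j^{2/2^{\ast}}$, together with the concavity of $t\mapsto t^{2/2^{\ast}}$ (which gives $\sum_j\nu_j^{2/2^{\ast}}\geq\nu(\mathbb{R}^N)^{2/2^{\ast}}$, with equality iff a single $\nu_j$ is nonzero) and the assumed opposite inequality $\mu(\mathbb{R}^N)\leq S_0\nu(\mathbb{R}^N)^{2/2^{\ast}}$, forces $J=\{x_0\}$ as claimed.  The main obstacle is the technical tracking of the $\alpha$-dependence through the weak-$*$ limits and the Rellich cross terms; both are dispatched by the monotonicity $(1+t)^{\alpha-1}\geq 1$ and the uniform $\mathcal{D}^{1,2}$-bound on $\{v_\alpha\}$ flagged at the outset, so the argument is in the end a direct adaptation of Lions' classical proof.
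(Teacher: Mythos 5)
The paper itself supplies no proof of Lemma~2.1, remarking only that it follows from the concentration-compactness principle in Orlicz spaces of~\cite{r122}; your self-contained argument is the natural one and is correct. Reducing to the classical Lions scheme via the pointwise monotonicity $(1+|\nabla u_\alpha|^2)^{\alpha-1}|\nabla u_\alpha|^2\geq|\nabla u_\alpha|^2$, the Brezis--Lieb decomposition for $\nu$, the defect measure $\tilde\mu$ for $|\nabla v_\alpha|^2$, and the reverse-H\"older/atomic step tested against $\phi v_\alpha$ all fit together exactly as you describe, and your reading of ``$\mu\equiv0$'' as a typo for ``$u\equiv0$'' in part~(2) is the only reading under which that part is non-vacuous and is surely what was intended. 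The one point worth stating explicitly (you flag it only implicitly) is that the lemma's hypothesis that $(1+|\nabla u_\alpha|^2)^{\alpha-1}|\nabla u_\alpha|^2$ have a weak-$*$ cluster point tacitly requires the uniform $L^1$ bound $\sup_\alpha\int(1+|\nabla u_\alpha|^2)^{\alpha-1}|\nabla u_\alpha|^2\,dx<\infty$, which does not follow from boundedness in $\mathcal{D}^{1,2}$ alone but is supplied in the applications by the uniform energy bounds on the critical points $u_\alpha$.
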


\begin{lemma}\label{l22}
  Let $\{u_\alpha\}$ be a critical point  of the functional $I_{\alpha}$, and  $u_\alpha\rightharpoonup u$ in $H_{0}^{1}(\Omega)$.
  Then there exist at most finitely many points $x_1,x_2, \cdots x_l\in\Omega$ such that
  $$u_\alpha \rightarrow u \ \mbox{in}\ H_{0}^{1}(\Omega\setminus\{x_1,x_2, \cdots x_l\}.$$
  \end{lemma}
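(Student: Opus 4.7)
The plan is to combine the concentration-compactness decomposition of Lemma~\ref{l21} with a cutoff test of the Euler--Lagrange equation to show that at most finitely many bubbles form, then extract strong $H_0^1$-convergence away from the bubble points by a monotonicity argument.

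First, since $\{u_\alpha\}$ is uniformly bounded in $H_0^1(\Omega)$ (obtained by combining $I_{\alpha,\lambda}(u_\alpha)-\frac{1}{2\alpha}\langle I'_{\alpha,\lambda}(u_\alpha),u_\alpha\rangle$ with inequality \eqref{app17}(1)), I would extend $u_\alpha$ by zero to $\mathbb{R}^N$ and apply Lemma~\ref{l21} to produce finite nonnegative measures $\mu,\nu$ and an at most countable set $\{x_j\}_{j\in J}$ with $\nu=|u|^{2^*}+\sum_j\nu_j\delta_{x_j}$ and $\mu\geq|\nabla u|^2+\sum_j S_0\nu_j^{2/2^*}\delta_{x_j}$. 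To prove $J$ is finite, it suffices to bound $\nu_j$ from below. For each $x_j$, pick a standard cutoff $\varphi_\varepsilon\in C_0^\infty(B_{2\varepsilon}(x_j))$ with $\varphi_\varepsilon\equiv 1$ on $B_\varepsilon(x_j)$ and $|\nabla\varphi_\varepsilon|\leq C/\varepsilon$, and test \eqref{P} against $\varphi_\varepsilon^2 u_\alpha$:
\begin{align*}
&\int_\Omega (1+|\nabla u_\alpha|^2)^{\alpha-1}|\nabla u_\alpha|^2\varphi_\varepsilon^2\,dx + 2\int_\Omega (1+|\nabla u_\alpha|^2)^{\alpha-1}u_\alpha\varphi_\varepsilon\nabla u_\alpha\cdot\nabla\varphi_\varepsilon\,dx \\
&\qquad =\int_\Omega |u_\alpha|^{2^*}\varphi_\varepsilon^2\,dx+\lambda\int_\Omega u_\alpha^2\varphi_\varepsilon^2\,dx.
\end{align*}
Sending first $\alpha\to 1$, the main left term tends to $\int\varphi_\varepsilon^2\,d\mu$ and the first right term to $\int\varphi_\varepsilon^2\,d\nu$; the cross term is controlled by Cauchy--Schwarz via the uniformly bounded $\mu$-energy times $(\int u_\alpha^2|\nabla\varphi_\varepsilon|^2\,dx)^{1/2}$, which vanishes as $\varepsilon\to 0$ by H\"older (using scale-invariance of $\int|\nabla\varphi_\varepsilon|^N$) together with Rellich, and the $\lambda$-term vanishes for the same reason since $u\in L^2$. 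Sending $\varepsilon\to 0$ yields $\mu(\{x_j\})=\nu(\{x_j\})=\nu_j$; combining with $\mu(\{x_j\})\geq S_0\nu_j^{2/2^*}$ forces $\nu_j\geq S_0^{N/2}$. Since $\nu$ is finite, only finitely many atoms exist, and a standard boundary cutoff (exploiting $u_\alpha|_{\partial\Omega}=0$) rules out concentration on $\partial\Omega$, so all $x_j\in\Omega$.

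To conclude strong convergence on any compact $K\subset\Omega\setminus\{x_1,\dots,x_l\}$, I would choose $\psi\in C_0^\infty(\Omega\setminus\{x_1,\dots,x_l\})$ with $\psi\equiv 1$ on $K$, and test both \eqref{P} (for $u_\alpha$) and the limit equation \eqref{app18} (for $u$) against $\psi^2(u_\alpha-u)$, then subtract. The right-hand side differences go to $0$ by Rellich ($L^2$ part) and by strong $L^{2^*}$-convergence on $\mathrm{supp}\,\psi$ (the atomless part of Lemma~\ref{l21} applied to the restriction of $\nu$ to $\mathrm{supp}\,\psi$ gives $\int_{\mathrm{supp}\,\psi}|u_\alpha|^{2^*}\to\int_{\mathrm{supp}\,\psi}|u|^{2^*}$). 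On the left, the elementary monotonicity
\begin{equation*}
\bigl((1+|a|^2)^{\alpha-1}a-(1+|b|^2)^{\alpha-1}b\bigr)\cdot(a-b)\geq c|a-b|^2
\end{equation*}
combined with the pointwise convergence $(1+|\nabla u|^2)^{\alpha-1}\to 1$ a.e.\ and the uniform $L^1$-control coming from \eqref{app17} yields $\int_K|\nabla u_\alpha-\nabla u|^2\,dx\to 0$.

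The chief technical obstacle is handling the nonhomogeneous $\alpha$-dependent coefficient $(1+|\nabla u_\alpha|^2)^{\alpha-1}$ simultaneously with the cutoff limits $\varepsilon\to 0$ and the strong-convergence monotonicity step: one must use \eqref{app17} to translate uniform $\alpha$-energy bounds into Dirichlet-energy bounds, and justify that the weak $L^2$-limit of the vector field $(1+|\nabla u_\alpha|^2)^{\alpha-1}\nabla u_\alpha$ coincides with $\nabla u$ away from the atoms, which is precisely what powers the monotonicity closure.
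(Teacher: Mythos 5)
Your proposal follows essentially the same route as the paper: Lemma~\ref{l21} for the atom decomposition, testing the equation against a cutoff times $u_\alpha$ to get $\mu(\{x_j\})=\nu(\{x_j\})$ and hence $\nu_j\geq S_0^{N/2}$ (so finiteness follows from $\nu$ being a finite measure), and then a compensated-compactness/monotonicity argument away from the atoms to upgrade weak to strong $H^1$-convergence. Your explicit monotonicity inequality $\bigl((1+|a|^2)^{\alpha-1}a-(1+|b|^2)^{\alpha-1}b\bigr)\cdot(a-b)\geq c|a-b|^2$ (which does hold with $c=1$ for $\alpha\geq 1$, since the Hessian of $P$ has all eigenvalues $\geq 1$) is a cleaner and more rigorous rendering of the paper's informal ``$\backsim$'' step in \eqref{e144}--\eqref{e14}, but the overall strategy is the same.
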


\begin{proof}
Since $\{\nabla u_\alpha\}$ is bounded in $L^{2}(C)$, there exists a $T\in L^{2}(\Omega)$  such that
$$(1+|\nabla u_{\alpha}|^2)^{\alpha-1}\nabla u_\alpha\rightharpoonup T\ \mbox{in}\ L^{2}(\Omega).$$
Obviously, $T$ satisfies
\begin{equation}\label{e11}
\int\limits_{\Omega} T \nabla \psi dx=\int\limits_{\Omega}(|u|^{2^{\ast}-1}\psi+\lambda u\psi)dx, \forall \psi \in H_{0}^{1}(\Omega).
\end{equation}
 By Lemma \ref{l21}, assume that there exists a countable set $\{x_1, x_2, \cdots, x_i, \cdots| x_i\in \Omega\}$, $\nu_j>0$ such that
  \begin{equation}\label{e12}
        |u_\alpha|^{2^{\ast}}\rightharpoonup\nu=|u|^{2^{\ast}}(x, 0)+\sum_{j\in J}\nu_j\delta_{x_j},
      \end{equation}
       \begin{equation}\label{e13}
       (1+|\nabla u_{\alpha}|^2)^{\alpha-1}|\nabla u_\alpha|^2  \rightharpoonup\mu\geq |\nabla u|^2+\sum_{j\in J} S_0\nu_j^{\frac{2}{2^{\ast}}}\delta_{x_j}.
      \end{equation}
 Now we show that $X:=\{x_1, x_2, \cdots\}$ is  a finite set. We choose $\psi=\phi u_\alpha$ in $\langle I'_{\lambda}(u_\alpha),\psi\rangle=o(1)\|\psi\|$,
and let $m\rightarrow +\infty$.  Then by \eqref{e11}, we have
\begin{align*}
  \int\limits_{\Omega}\phi d\nu=  \int\limits_{\Omega} \phi d\mu -\int\limits_{\Omega}\phi T_iD_i u  dx +\int\limits_{\Omega} |u|^{2^{\ast}}\phi dx.
\end{align*}
Let  $\phi$ concentrate on $x_i$, then $\mu(\{x_i\})=\nu(\{x_i\})$.  Moreover, according to
 the relation $\mu(\{x_j\})\geq S_0(\nu(\{x_j\}))^{2/2^{{\ast}}}$, if $\nu_j=\nu(\{x_j\})>0$, then $\nu_j\geq S_0^{N/2}$. Note that $\nu(\bar{\Omega})<\infty$. This implies that $X$ is a finite set.

Choose a function $\varphi\in  C_{0}^{\infty}(\bar{\Omega})$,   $\varphi\geq 0$,
$\varphi\equiv 0$ on $\partial\Omega$,  and $\varphi(x_j)=0$, $\forall x_j\in X$. We get
$$\int\limits_{\Omega} |\varphi u_\alpha|^{2^{\ast}}dx \rightarrow \int\limits_{\Omega} |\varphi u|^{2^{\ast}}dx+\sum_{j}\nu_j\varphi^{2^{\ast}}(x_j)= \int\limits_{\Omega} |\varphi u|^{2^{\ast}}dx.$$
So,   $\varphi u_\alpha\rightarrow \varphi u$  in $L^{2^{\ast}}(\Omega)$. Furthermore,  we easily  obtain
\begin{align}\label{e144}
 &\int\limits_{\Omega} \varphi(1+|\nabla u_{\alpha}|^2)^{\alpha-1}\nabla u_{\alpha}\nabla ( u_{\alpha}-u)dx-\int\limits_{\Omega} \varphi\nabla u\nabla ( u_{\alpha}-u)dx\nonumber \\
 &=\int\limits_{\Omega}(|u_\alpha|^{2^{\ast}-1}-|u|^{2^{\ast}-1} )(u_\alpha-u)\varphi dx
   +\lambda \int\limits_{\Omega} \varphi  (u_\alpha-u)^2dx \nonumber \\
 & -\int\limits_{\Omega} (1+|\nabla u_{\alpha}|^2)^{\alpha-1}\nabla u_{\alpha}\nabla \varphi ( u_{\alpha}-u)dx-\int\limits_{\Omega} \nabla u\nabla\varphi ( u_{\alpha}-u)dx +o(1) \rightarrow 0\ \mbox{as}\ n\rightarrow \infty.
\end{align}
We easily know that $$\int\limits_{\Omega} \varphi(1+|\nabla u_{\alpha}|^2)^{\alpha-1}\nabla u_{\alpha}\nabla ( u_{\alpha}-u)dx-\int\limits_{\Omega} \varphi\nabla u\nabla ( u_{\alpha}-u)dx\backsim \int\limits_{\Omega} \varphi (|\nabla u_{\alpha}|^{2}- |\nabla u|^2)dx$$

So, we have
\begin{align}\label{e14}
 &\int\limits_{\Omega} \varphi (|\nabla u_{\alpha}|^{2}- |\nabla u|^2)dx
 \backsim \int\limits_{\Omega}(|u_\alpha|^{2^{\ast}-1}-|u|^{2^{\ast}-1} )(u_\alpha-u)\varphi dx
   +\lambda \int\limits_{\Omega} \varphi  (u_\alpha-u)^2dx \nonumber \\
 & -\int\limits_{\Omega} (1+|\nabla u_{\alpha}|^2)^{\alpha-1}\nabla u\nabla \varphi ( u_{\alpha}-u)dx-\int\limits_{\Omega} \nabla u\nabla\varphi ( u_{\alpha}-u)dx +o(1) \rightarrow 0\ \mbox{as}\ n\rightarrow \infty.
\end{align}
This implies that
$$u_\alpha \rightarrow u \ \mbox{in}\ H_{0,  loc}^{1}(\Omega\setminus\{x_1,x_2, \cdots x_l\}).$$
\end{proof}

\begin{lemma}\label{l23}
  Suppose that $u_{\alpha}\in H_{0}^{1}(\Omega)$ satisfy $I_{\alpha,0}'(u_{\alpha})\rightarrow 0$ and $u_{\alpha}\rightharpoonup 0$ in $H_{0}^{1}(\Omega)$. Then there exist three sequences  $\{x_{\alpha}\}\subset \Omega$, $R_{\alpha}\rightarrow\infty$ and $w_{\alpha}\in H_{0}^{1}(\Omega)$ such that
  $$w_{\alpha}(x)=u_{\alpha}-R_{\alpha}^{(N-2)/2}U_0(R_{\alpha}(x-x_{\alpha}))+o(1),$$
  $$|I_{\alpha, 0}(u_{\alpha})-I_{\alpha, 0}(w_{\alpha})-I_{\alpha, 0}(U_0, \mathbb{R}^{N})|\rightarrow 0,$$
  $$I'_{\alpha, 0}(w_{\alpha})\rightarrow 0,$$
  $$R_{\alpha} \emph{dist}(x_{\alpha}, \partial \Omega)\rightarrow +\infty,$$
where $o(1)\rightarrow 0$ in $\mathcal{D}^{1,2}(\mathbb{R}^{N+1})$ and $U_0$ is a solution of
\begin{equation}\label{P5}
-\Delta u=u^{2^{\ast}-1}, \mbox{in}\  \mathbb{R}_{+}^N.
\end{equation}
\end{lemma}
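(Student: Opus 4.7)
The plan is to follow the bubble-extraction scheme of Struwe \cite{r12} and Devillanova-Solimini \cite{r3}, adapted to the non-homogeneous Sacks-Uhlenbeck operator. If $u_\alpha\to 0$ strongly there is nothing to extract, so assume otherwise: by Lemma \ref{l22} (via Lemma \ref{l21}) the defect measure of $|u_\alpha|^{2^{\ast}}$ has at least one atom with mass $\geq S_0^{N/2}$ at some $x_0\in\bar\Omega$. I would fix a small $\delta_0\in (0,S_0^{N/2}/2)$ and pick the scale $R_\alpha$ and the center $x_\alpha$ by the concentration rule
\[
\int_{B(x_\alpha,1/R_\alpha)}|u_\alpha|^{2^{\ast}}\,dx=\sup_{x\in\bar\Omega}\int_{B(x,1/R_\alpha)}|u_\alpha|^{2^{\ast}}\,dx=\delta_0,
\]
which forces $R_\alpha\to\infty$ and $x_\alpha\to x_0$.

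Next I would introduce the rescaled sequence $v_\alpha(y):=R_\alpha^{-(N-2)/2}u_\alpha(x_\alpha+y/R_\alpha)$ on $\Omega_\alpha:=R_\alpha(\Omega-x_\alpha)$. Conformal invariance gives $\|\nabla v_\alpha\|_{L^2(\Omega_\alpha)}=\|\nabla u_\alpha\|_{L^2(\Omega)}$ bounded, and $v_\alpha$ solves the rescaled equation
\[
-\mathrm{div}_y\!\bigl[(1+R_\alpha^{N}|\nabla v_\alpha|^{2})^{\alpha-1}\nabla v_\alpha\bigr]=|v_\alpha|^{2^{\ast}-2}v_\alpha+\lambda R_\alpha^{-2}v_\alpha\quad\text{in }\Omega_\alpha.
\]
If $R_\alpha\,\mathrm{dist}(x_\alpha,\partial\Omega)$ stayed bounded, $\Omega_\alpha$ would converge (after a rotation) to a half-space, and the non-existence of nontrivial positive finite-energy solutions of $-\Delta U=U^{2^{\ast}-1}$ in a half-space with Dirichlet condition (Pohozaev / Gidas-Spruck) would force the weak limit $U_0\equiv 0$, contradicting the lower bound $\int_{B(0,1)}|U_0|^{2^{\ast}}\geq\delta_0$ built into the choice of $R_\alpha$. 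Hence $R_\alpha\,\mathrm{dist}(x_\alpha,\partial\Omega)\to\infty$ and $\Omega_\alpha\to\mathbb{R}^N$.

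The key analytic step is to identify the limit equation of $v_\alpha$. Using \eqref{app17} together with the uniform bound on $I_{\alpha,0}(u_\alpha)$ one gets $\int_\Omega|\nabla u_\alpha|^{2\alpha}\,dx=O(1)$, which after rescaling gives $\int_{B_r(0)}|\nabla v_\alpha|^{2\alpha}\,dy\leq C\,R_\alpha^{-N(\alpha-1)}$; combined with the normalization of $R_\alpha$ this yields $(\alpha-1)\log R_\alpha\to 0$, which is exactly the regime in which $(1+R_\alpha^{N}|\nabla v_\alpha|^{2})^{\alpha-1}\to 1$ pointwise on every set where $|\nabla v_\alpha|$ is controlled. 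A Boccardo-Murat style almost-everywhere gradient convergence then lets one pass to the limit in the rescaled equation tested against $C_c^\infty(\mathbb{R}^N)$, giving $v_\alpha\rightharpoonup U_0$ in $\mathcal{D}^{1,2}(\mathbb{R}^N)$ with $-\Delta U_0=|U_0|^{2^{\ast}-2}U_0$; the maximum principle, combined with the concentration lower bound preventing $U_0\equiv 0$, delivers the positive solution.

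Finally set $w_\alpha(x):=u_\alpha(x)-R_\alpha^{(N-2)/2}U_0(R_\alpha(x-x_\alpha))$ (a standard cut-off is added to keep it in $H_0^1(\Omega)$, and is $o(1)$ in $\mathcal{D}^{1,2}$ thanks to $R_\alpha\,\mathrm{dist}(x_\alpha,\partial\Omega)\to\infty$). For the $L^{2^{\ast}}$ term the Brezis-Lieb lemma yields the splitting directly; for the $\alpha$-energy of the rescaled bubble a change of variables gives
\[
\frac{1}{2\alpha}\int_\Omega\!\bigl[(1+|\nabla(\text{bubble})|^{2})^{\alpha}-1\bigr]dx=\frac{1}{2\alpha R_\alpha^{N}}\int_{\mathbb{R}^N}\!\bigl[(1+R_\alpha^{N}|\nabla U_0|^{2})^{\alpha}-1\bigr]dy\longrightarrow \tfrac{1}{2}\|\nabla U_0\|_{2}^{2},
\]
again from $(\alpha-1)\log R_\alpha\to 0$; a convexity splitting for $P$ provided by \eqref{app17} controls the cross-term, producing the required decomposition $I_{\alpha,0}(u_\alpha)=I_{\alpha,0}(w_\alpha)+I_{1,0}(U_0,\mathbb{R}^N)+o(1)$. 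The derivative bound $I'_{\alpha,0}(w_\alpha)\to 0$ then follows from $I'_{\alpha,0}(u_\alpha)\to 0$ and the fact that the subtracted bubble approximately solves the Euler-Lagrange equation. The main obstacle throughout is controlling the non-homogeneous factor $(1+|\nabla u|^2)^{\alpha-1}$ under the rescaling, and in particular establishing the scaling regime $(\alpha-1)\log R_\alpha\to 0$; once that is in hand, the rest of the argument mirrors the classical global compactness proof.
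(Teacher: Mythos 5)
Your proposal follows the same global-compactness/bubble-extraction scheme as the paper: choose a concentration scale $R_\alpha$, rescale $v_\alpha(y)=R_\alpha^{-(N-2)/2}u_\alpha(x_\alpha+y/R_\alpha)$, rule out the half-space alternative by Pohozaev, cut off the bubble and split the energy. The only cosmetic difference is that you normalize $R_\alpha$ by the $L^{2^\ast}$-mass alone, while the paper's $Q_\alpha$ in \eqref{e16} uses the combined Dirichlet plus $L^{2^\ast}$ mass; this does not materially change anything. The one place where you go further than the paper is precisely the crucial analytic point that the paper glosses over in \eqref{e22} and \eqref{e25}: one must justify that the factor $(1+|\nabla u_\alpha|^2)^{\alpha-1}=(1+R_\alpha^N|\nabla v_\alpha|^2)^{\alpha-1}$ degenerates to $1$ on the bubble scale, so that the limiting profile solves $-\Delta U_0=U_0^{2^\ast-1}$ and the $\alpha$-energy of the bubble reduces to $\tfrac12\|\nabla U_0\|_2^2$. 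You propose the criterion $(\alpha-1)\log R_\alpha\to 0$.

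That criterion is not derived by the argument you give, and this is a genuine gap. From the Nehari identity one does get $\int_\Omega|\nabla u_\alpha|^{2\alpha}\,dx\le C$ and hence, after rescaling, $\int|\nabla v_\alpha|^{2\alpha}\,dy\le CR_\alpha^{-N(\alpha-1)}$; and the normalization of $R_\alpha$, together with a local Caccioppoli/Sobolev estimate, yields a fixed lower bound $\int_{B_2}|\nabla v_\alpha|^2\ge c_0>0$. But H\"older then gives only
\[
c_0\le |B_2|^{\frac{\alpha-1}{\alpha}}\Bigl(\int_{B_2}|\nabla v_\alpha|^{2\alpha}\Bigr)^{1/\alpha}\le C\,R_\alpha^{-N(\alpha-1)/\alpha}(1+o(1)),
\]
i.e.\ $(\alpha-1)\log R_\alpha\le M$ for some finite $M$. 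This is boundedness, not vanishing. If, along a subsequence, $(\alpha-1)\log R_\alpha\to L\in(0,M]$ — and nothing in the hypotheses of the lemma excludes the regime $R_\alpha\sim\exp\bigl(L/(\alpha-1)\bigr)$ — then wherever $|\nabla v_\alpha|$ has a nontrivial finite limit the weight $(1+R_\alpha^N|\nabla v_\alpha|^2)^{\alpha-1}$ tends to the constant $e^{NL}>1$ rather than to $1$. The limit equation then becomes $-e^{NL}\Delta U_0=U_0^{2^\ast-1}$, and the bubble's $\alpha$-energy picks up an extra factor, so both the claimed limit PDE and the energy split $I_{\alpha,0}(u_\alpha)-I_{\alpha,0}(w_\alpha)\to I_{1,0}(U_0,\mathbb{R}^N)$ would fail. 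You therefore need a different mechanism to force the weight to degenerate — either a sharper a priori estimate on $\int(1+|\nabla u_\alpha|^2)^{\alpha-1}|\nabla u_\alpha|^2-\int|\nabla u_\alpha|^2$, or a uniform local $C^1$ bound on $v_\alpha$ that bounds $R_\alpha^N|\nabla v_\alpha|^2$ independently of $\alpha$ on the bubble region — before the Boccardo--Murat passage to the limit can be carried out as you describe.
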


\begin{proof}
We assume that
\begin{equation}\label{e15}
  |u_{\alpha}|^{2^{\ast}}\rightarrow \sum \nu_j\delta_{x_j}, x_j \in \bar{\Omega}.
\end{equation}
Then there  exists at least one $\nu_j\not=0$. Otherwise, $u_{\alpha}\rightarrow 0$ in $L^{2^{\ast}}(\Omega)$. And by $I_{\alpha,0}'(u_{\alpha})\rightarrow 0$, we have
 $u_{\alpha}\rightarrow 0$ in $H_{0}^{1}(\Omega)$.

 Set

 \begin{equation}\label{e16}
   Q_{\alpha}(r)=\sup_{x\in \bar{\Omega}}\left\{\int\limits_{B_r(x)}|\nabla u_{\alpha}|^2dx+\int\limits_{B_r(x)}|u_{\alpha}|^{2^{\ast}}dx\right\}.
 \end{equation}
For sufficiently small $\tau \in(0, S_0^{N/2})$, choose $R_{\alpha}=R_{\alpha}(\tau)>0, x_{\alpha}\in \bar{\Omega}$ such that

 \begin{equation}\label{e17}
   \left\{\int\limits_{B_{1/R_{\alpha}}(x_{\alpha})}|\nabla u_{\alpha}|^2dx+\int\limits_{B_{1/R_{\alpha}}(x_{\alpha})}|u_{\alpha}|^{2^{\ast}}dx\right\}=Q_{\alpha}\left(\frac{1}{R_{\alpha}}\right)=\tau.
 \end{equation}
Set
\begin{equation}\label{e18}
  \tilde{u}_{\alpha}(x)=R_{\alpha}^{(2-N)/2}u_{\alpha}\left(\frac{x}{R_{\alpha}}+x_{\alpha}\right), x\in \Omega_{\alpha}=\left\{x: \frac{x}{R_{\alpha}}+x_{\alpha}\in \Omega\right\}.
\end{equation}
Then,  we obtain
 \begin{align}\label{e19}
   \tilde{Q}_{\alpha}(r)&=\sup_{x\in \mathbb{R}^{N}}\left\{\int\limits_{B_r(x)}|\nabla \tilde{u}_{\alpha}|^2dx+\int\limits_{B_r(x)}|\tilde{u}_{\alpha}|^{2^{\ast}}dx\right\}\nonumber\\
   &=\sup_{x\in \mathbb{R}^{N}}\left\{\int\limits_{B_{r/R_{\alpha}}(x)}|\nabla u_{\alpha}|^2dx+\int\limits_{B_{r/R_{\alpha}}(x)}|u_{\alpha}|^{2^{\ast}}dx\right\}\nonumber\\
   &=Q_\alpha\left(\frac{r}{R_{\alpha}}\right).
 \end{align}
Hence,
\begin{equation}\label{e20}
  \left\{\int\limits_{B_1(0)}|\nabla \tilde{u}_\alpha|^2dx+\int\limits_{B_1(0)}|\tilde{u}_\alpha|^{2^{\ast}}dx\right\}=\tilde{Q}_\alpha(1)=\tau.
\end{equation}
Now we will show that there exists  a sufficiently small $\tau\in (0, S_0^{N/2})$ such that $R_{\alpha}(\tau)\rightarrow +\infty$, as $m\rightarrow +\infty$,
if not, for each $\varepsilon>0$, there is a constant $M_{\varepsilon}>0$ such that $R_{\alpha}(\varepsilon)\leq M_{\varepsilon}$. So,
\begin{align}\label{e21}
 \int\limits_{B_{1/M_\varepsilon}(x)}|u_{\alpha}|^{2^{\ast}}dx&\leq  \sup_{x\in \bar{\Omega}} \left\{\int\limits_{B_{1/R_{\alpha}}(x_{\alpha})}|\nabla u_{\alpha}|^2dx+\int\limits_{B_{1/R_{\alpha}}(x_{\alpha})}|u_{\alpha}|^{2^{\ast}}dx\right\}\nonumber\\
&=Q_{\alpha}\left(\frac{1}{R_{\alpha}}\right)=\varepsilon, \forall x\in \bar{\Omega}.
\end{align}
Futhermore,
$$\nu_j\leq \int\limits_{B_{1/M_\varepsilon}(x)}|u_{\alpha}|^{2^{\ast}}dx +o(1)\leq \varepsilon+o(1), \forall \varepsilon >0.$$
Contradiction!

Now  we distinguish two case:

(i) $R_{\alpha} \mbox{dist}(x_{\alpha}, \partial \Omega)\rightarrow +\infty$, in this case $\Omega_\alpha\rightarrow \Omega^{\infty}=\mathbb{R}^{N}$.

(ii) $R_{\alpha} \mbox{dist}(x_{\alpha}, \partial \Omega)\rightarrow M<+\infty$, uniformly. Then after an orthogonal transformation,
$$\Omega_\alpha\rightarrow \Omega^{\infty}=\mathbb{R}_{+}^{N}=\{x=(x_1,  \cdots, x_N ), x_1>0\}.$$

If $(x, y)\not \in \Omega_\alpha$, we define $\tilde{u}_{\alpha}=0$. Since
$$\int\limits_{\mathbb{R}^{N}}|\nabla \tilde{u}_\alpha|^2 dx=\int\limits_{\Omega_\alpha} |\nabla \tilde{u}_\alpha|^2 dx=\int\limits_{\Omega} |\nabla u_{\alpha}|^2 dx,$$
we can assume that $\tilde{u}_\alpha\rightharpoonup U_0$ in $\mathcal{D}^{1,2}(\mathbb{R}_{+}^{N})$.

Since in each case for  any $\varphi \in C_{}^{\infty}(\bar{\Omega}^{\infty})$,
we have that $\varphi \in  C_{}^{\infty}(\bar{\Omega}_{\alpha})$,
for large $m$, there holds
\begin{align}\label{e22}
  & \int\limits_{\mathbb{R}^{N}} \nabla \tilde{u}_{\alpha} \nabla \varphi dx-\int\limits_{\mathbb{R}^N} |\tilde{u}_{\alpha}|^{2^{\ast}-1}\phi dx\nonumber \\
  & = \int\limits_{\mathbb{R}^{N}} \nabla u_{\alpha} \nabla \varphi_{\alpha}^{\ast} dx-\int\limits_{\mathbb{R}^N} |u_{\alpha}|^{2^{\ast}-1}\varphi_{\alpha}^{\ast} dx\nonumber \\
  & = \int\limits_{\mathbb{R}^{N}}(1+|\nabla \tilde{u}_{\alpha}|^2)^{\alpha-1} \nabla u_{\alpha} \nabla \varphi_{\alpha}^{\ast} dx-\int\limits_{\mathbb{R}^N} |u_{\alpha}|^{2^{\ast}-1}\varphi_{\alpha}^{\ast} dx+o(1)\nonumber \\
  &= o(1) \|\varphi_{\alpha}^{\ast}\|_{H_{}^{1}(C)}= o(1) \|\varphi\|_{\mathcal{D}^{1,2}(\mathbb{R}^{N})},
\end{align}
where $\varphi_{m}^{\ast}(x)=R_{\alpha}^{(2-p)/2}\varphi(R_{\alpha}(x-x_{\alpha}))\in H_{0}^1(\Omega)$. We are going to  prove that
 \begin{equation}\label{e225}
 \tilde{u}_{\alpha}\rightarrow U_0\ \ \mbox{in}\ \ \mathcal{D}_{loc}^{1,2}(\mathbb{R}^{N}).
 \end{equation}
To do this, by \eqref{e22}, we next  only need to show that
$$\tilde{u}_{\alpha}\rightarrow U_0\ \ \mbox{in}\ \ L_{loc}^{2^{\ast}}(\mathbb{R}^{N}).$$
Indeed, choose a smooth  cut-off function $\eta\in  H_{0}^{1}(\Omega)$ such that $0\leq \eta \leq 1$,  $\eta=1$ in $B_1$, $\eta=0$ outside $B_2$, where,
$B_{\rho}=\{(x): |(x|\leq \rho\}.$
Assume that
  \begin{equation}\label{e23}
        |\eta \tilde{u}_{\alpha}|^{2^{\ast}}\rightharpoonup\nu=|\eta U_0|^{2^{\ast}}(x, 0)+\sum_{j\in J}\nu_j\delta_{x_j},
      \end{equation}
   \begin{equation}\label{e24}
       |  \nabla(\eta \tilde{u}_{\alpha})|^2  \rightharpoonup\mu\geq |\nabla (\eta U_0)|^2+\sum_{j\in J} S_0\nu_j^{\frac{2}{2^{\ast}}}\delta_{x_j}.
      \end{equation}
Similar to Lemma \ref{l21}, it is easy to show that if $\nu_j=\nu(\{x_j\})>0$, then  $\nu_j\geq S_0^{N/2}$, where $x_j\in B_k$. Moreover
$$S_0^{N/2}>\tau=\tilde{Q}_{\alpha}(1)\geq \int\limits_{B_1(x_j)}|\nabla \tilde{u}_{\alpha}|^2dx \geq \mu(\{x_j\})\geq S_0^{N/2}.$$
Contradiction. So, $\nu_j=\nu(\{x_j\})=0$, and \eqref{e225} holds.

Next we will show that case (ii) doesn't occur. In fact,  using \eqref{e22} and  \eqref{e225}, one has
\begin{equation}\label{P99}
-\Delta U_0=U_0^{2^{\ast}-1}, \mbox{in}\  \mathbb{R}^N,
\end{equation}
Similar to  the proof in Theorem 1.1  in \cite{r12}, we can use   Pohozaev identity and Strong maximum principle (see \cite{r10}) to show that $U_0=0$. This  contradicts the following relation
\begin{align}
&\int\limits_{B_1(0)} |\nabla U_0|^2dx+\int\limits_{B_1(0)}|U_0|^{2^{\ast}}dx\nonumber\\
&\ \ =\lim_{\alpha\rightarrow 1}\int\limits_{B_1(0)} (1+|\nabla \tilde{u}_{\alpha}|^2)^{\alpha-1}|\nabla \tilde{u}_{\alpha}|^2dx+\int\limits_{B_1(0)}|\tilde{u}_{\alpha}|^{2^{\ast}}dx=\tau>0.
\end{align}
Hence, $R_{\alpha} \mbox{dist}(x_{\alpha}, \partial \Omega)\rightarrow +\infty$.

Let  $\alpha\in C(B_2)$  satisfying $0\leq \alpha\leq 1$, $\alpha=1$ in $B(0,1)$, and $\alpha=0$ outside $B(0,2)$. Set
$$w_{\alpha}(x)=u_{\alpha}(x)-R_{\alpha}^{(N-2)/2}U_0(R_{\alpha}(x-x_{\alpha}))\alpha (\bar{R}_{\alpha} (x-x_{\alpha}))\in H_{0, L}^1(C)),$$
where the sequence  $\bar{R}_{\alpha}$ is chosen such that $\bar{R}_{\alpha} \mbox{dist}(0, \partial \Omega)\rightarrow +\infty$ and
$\tilde{R}_{\alpha}:=\frac{R_{\alpha}}{\bar{R}_{\alpha}}\rightarrow +\infty$,  then  we have
$$\tilde{w}_{\alpha}(x)=\tilde{u}_{\alpha}(x)-U_0(x)\alpha \left(\frac{x}{\tilde{R}_{\alpha}}\right).$$

Similar to \cite{r12}, it is easy to show that
\begin{equation}\label{e24.5}
  w_{\alpha}(x)=u_{\alpha}-R_{\alpha}^{(N-2)/2}U_0(R_{\alpha}(x-x_{\alpha}))+o(1),
\end{equation}
where $o(1)\rightarrow 0$ in $\mathcal{D}^{1,2}(\mathbb{R}^{N+1})$.

\begin{equation}\label{e24.6}
  \tilde{w}_{\alpha}=\tilde{u}_{\alpha}-U_0+o(1).
\end{equation}
Therefore, by \eqref{e225}, we get
\begin{align}\label{e25}
   & \int\limits_{\Omega} [(1+|\nabla u_{\alpha}|^2)^{\alpha}-1 ]dx- \int\limits_{\Omega}[(1+|\nabla w_{\alpha}|^2)^{\alpha}-1 ]dx\nonumber \\
   &\ \ = \int\limits_{\Omega} |\nabla u_{\alpha}|^2dx-\int\limits_{\Omega} |\nabla u_{\alpha}-R_{\alpha}^{(N-2)/2}U_0(R_{\alpha}(x-x_{\alpha}))|^2dx+o(1)\nonumber \\
   &\ \ =\int\limits_{\mathbb{R}^{N}} |\nabla \tilde{u}_{\alpha}|^2dx-\int\limits_{\mathbb{R}^{N}} |\nabla \tilde{u}_{\alpha}-U_0|^2dx+o(1)\nonumber\\
   &\ \ =  \int\limits_{\mathbb{R}^{N}}\int\limits_0^1 2(t\nabla \tilde{u}_{\alpha}+(1-t)\nabla (\tilde{u}_{\alpha}-U_0))\nabla U_0dtdx+o(1)\nonumber \\
   &\ \ \rightarrow  \int\limits_{\mathbb{R}^{N}}\int\limits_0^1 2|\nabla U_0|^2t dtdx=\int\limits_{\mathbb{R}^{N}} |\nabla U_0|^2dx.
\end{align}
To proceed, observe that like (\ref{e25}), we  have
$$\int\limits_{\Omega}|u_{\alpha}|^{2^{\ast}}dx-\int\limits_{\Omega}|w_{\alpha}|^{2^{\ast}}dx\rightarrow \int\limits_{\mathbb{R}^{N} }|u|^{2^{\ast}}dx.$$
So $$|I_{\alpha,0}(u_{\alpha})-I_{\alpha,0}(w_{\alpha})-I_{\alpha,0}(u, \mathbb{R}^{N})|\rightarrow 0.$$
On the other hand,  according to \eqref{e24.5}  and \eqref{e24.6}, we easily infer
\begin{align}\label{e26}
                     &|\langle I_{\alpha, 0}'(u_{\alpha}), \varphi \rangle-\langle I_{\alpha, 0}'(w_{\alpha}), \varphi \rangle| \nonumber \\
                      & \leq  C\left[\left(\int\limits_{\mathbb{R}^{N}}|\nabla \tilde{u}_{\alpha}-\nabla \tilde{w}_{\alpha}-\nabla U_0|^{2}dx\right)^{\frac{1}{2}}\right.\nonumber \\
                      &\ \  + \left.\left(\int\limits_{\mathbb{R}^{N}}\left( |\tilde{u}_{\alpha}|^{2^{\ast}-1}-|\tilde{w}_{\alpha}|^{2^{\ast}-1}-|U_0|^{2^{\ast}-1}\right)^{2^{\ast'}}dx\right)^{\frac{1}{2^{\ast'}}}  \right]\| \varphi \|_{\mathcal{D}^{1,2}(\mathbb{R}_{+}^{N+1})}\rightarrow 0.
                      \end{align}
\end{proof}

\begin{proof}[\textbf{Proof of Theorem \ref{p21}}]
Applying Lemma \ref{l23} to to the sequences,
$$v^1_{\alpha}=u_{\alpha}-u,$$
$$v^j_{\alpha}=u_{\alpha}-u-\sum_{i=1}^{j-1}U_{\alpha}^i=v_{\alpha}^{j-1}-U_{\alpha}^{j-1}, j>1,$$
where $U_{\alpha}^{i}(x)=(R_{\alpha}^i)^{\frac{N-2}{2}}U_{i}\left(R_{\alpha}^i(x-x_n^i)\right)$.

By induction
\begin{align*}
  I_{\alpha, 0}(v_{\alpha}^j) &=I_{\alpha,\lambda}(u_{\alpha})-I_{\alpha,\lambda}(u)-\sum_{i=1}^{j-1}I_{\alpha,0}(U^j,\mathbb{R}^{N}), \\
  &\leq I_{\alpha,\lambda}(u_{\alpha})-(j-1)\beta^{\ast},
\end{align*}
where $\beta^{\ast}=\frac{1}{2N}S^N$.

Since the latter will be negative for large $j$, the iteration must stop after finite steps; moreover, for this index we have
$$v_{m}^{k+1}=u_{\alpha}-u-\sum_{j=1}^kU_{\alpha}^j\rightarrow 0\ \  \mbox{in}\ \ $$
and $$I_{\alpha,\lambda}(u_{\alpha})-I_{\alpha,\lambda}(u)-\sum_{j=1}^kI_{\alpha, 0}(U_j,\mathbb{R}^{N})\rightarrow 0.$$
The proof is complete.
\end{proof}

 \section{Proof of Theorem \ref{rrr}}
In this section, we assume that $1<\alpha<\frac{N}{N-2}$. This section will be  divided into three subsections. In subsection 3.1, we will give some integral estimates. In subsection, we will prove some estimates on safe regions. Subsection 3.3 is devoted to proving Theorem \ref{rrr}.
\subsection{Some integral estimates}
For  any $p_2<2^{\ast}<p_1$, $\beta>0$ and $R\geq 1$, let us consider the following relation:
\begin{equation}\label{a1}
\left\{
\begin{array}{ll}
\|u_1\|_{p_1}\leq \beta,    \\
\|u_2\|_{p_2}\leq \beta R^{\frac{N}{2^{\ast}}-\frac{N}{p_2}}.
\end{array}
\right.
\end{equation}
Based the idea of  \cite{r3}, we define
$$\|u\|_{p_1,p_2,R}=\inf\{\beta: \mbox{there are\ } u_1 \ \mbox{and}\  u_2, \mbox{such that} (\ref{a1}) \mbox{\ holds \ and\ } |u|\leq u_1+u_2 \}.$$
In this subsection, our main result is the following proposition.
\begin{proposition}\label{p31}
Let $u_{\alpha}$ be a weak solution of (\ref{P}) with $\alpha\rightarrow 1$. For any $p_1$, $p_2$ satisfying
$\left(1-\frac{1}{2\alpha}\right)<p_2<2^{\ast}<p_1$,  there exists  a constant $C$, depending on $p_1$ and $p_2$, such that
$$\|u_{\alpha}\|_{p_1,p_2,R_{\alpha}}\leq C.$$
\end{proposition}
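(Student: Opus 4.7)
The plan is to adapt the Moser-type iteration of Devillanova--Solimini \cite{r3} to the two-parameter norm $\|\cdot\|_{p_1,p_2,R_\alpha}$ and to the inhomogeneous Sacks--Uhlenbeck operator. The base case is $(p_1,p_2)=(2^*,2^*)$, where $\|u_\alpha\|_{2^*,2^*,R_\alpha}$ reduces to $\|u_\alpha\|_{2^*}$ (by choosing $u_1=|u_\alpha|$, $u_2=0$), and uniform control follows from the Sobolev inequality \eqref{e4} together with the uniform $H^1$-bound supplied by the energy identity of Theorem \ref{p21}. The induction step aims to pass from a uniform bound at an admissible pair $(p_1,p_2)$ to one at $(p_1',p_2')$ with $p_1'>p_1$ and $p_2'<p_2$; after finitely many iterations any prescribed pair in the claimed range is reached.

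The iteration itself proceeds by testing \eqref{P} with $\phi=u_\alpha T_M(u_\alpha)^{2\beta}$, where $T_M(s)=\mathrm{sgn}(s)\min(|s|,M)$ is truncation at a level $M$ coupled to $R_\alpha$, and $\beta>0$ is to be chosen. The coercivity $(1+|\nabla u_\alpha|^2)^{\alpha-1}|\nabla u_\alpha|^2\geq |\nabla u_\alpha|^2$ coming from \eqref{app17}(1) lets us discard the inhomogeneous weight on the left and arrive at
$$\int_\Omega \bigl|\nabla\bigl(u_\alpha T_M(u_\alpha)^\beta\bigr)\bigr|^2\,dx \ \leq\ C\int_\Omega |u_\alpha|^{2^*}\,T_M(u_\alpha)^{2\beta}\,dx + \lambda\int_\Omega u_\alpha^2\,T_M(u_\alpha)^{2\beta}\,dx.$$
Applying \eqref{e4} to the left-hand side and splitting $|u_\alpha|^{2^*-2}\leq v_1^{2^*-2}+v_2^{2^*-2}$ on the right using the induction decomposition $|u_\alpha|\leq v_1+v_2$ with $\|v_1\|_{p_1}\leq C$ and $\|v_2\|_{p_2}\leq CR_\alpha^{N/2^*-N/p_2}$, followed by H\"older with exponents adapted to $(p_1,p_2)$, produces an improved bound. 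Taking $M\sim R_\alpha^{(N-2)/2}$ (the natural peak height of a bubble at scale $R_\alpha$) is what makes the output readable as $\|v_1'\|_{p_1'}\leq \beta'$ and $\|v_2'\|_{p_2'}\leq \beta' R_\alpha^{N/2^*-N/p_2'}$, which closes the induction.

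The main obstacle is the bookkeeping of the scale $R_\alpha$ through the truncation and H\"older estimates, so that the factor $R_\alpha^{N/2^*-N/p_2}$ regenerates itself with the correct exponent at every step and the truncation level $M$ aligns with the transition between the two components $v_1$ and $v_2$. A secondary difficulty is the extra term that appears if the weight $(1+|\nabla u_\alpha|^2)^{\alpha-1}$ is expanded rather than merely discarded: inequality \eqref{app17}(3) controls the resulting discrepancy by a power with exponent $2\alpha$, and this is precisely what produces the lower threshold $p_2>1-\frac{1}{2\alpha}$, since only for $p_2$ above this value does the H\"older combination involving the inhomogeneous part remain integrable as $\alpha\to 1$.
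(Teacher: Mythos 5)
Your proposal takes a genuinely different route from the paper's argument, which does \emph{not} run a Moser iteration on $u_\alpha$ directly. The paper instead (i) compares $|u_\alpha|$ with the auxiliary supersolution $w_\alpha$ of \eqref{a19} via \eqref{a20}, (ii) establishes the starting bound via Lemma \ref{l35}, and (iii) bootstraps to the full range via Lemma \ref{l34}. The engine is the Wolff potential / comparison estimate of Lemma \ref{l31} (drawn from Kuusi--Mingione), which yields the \emph{pointwise} inequality $w(x)\leq C\inf_{B_r}w + Cw_1(x)+Cw_2(x)$ with $w_i$ solving the split equations for $f_i$; this pointwise splitting is exactly what produces the decomposition $|u_\alpha|\leq v_1+v_2$ required by the definition of $\|\cdot\|_{p_1,p_2,R_\alpha}$. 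Your approach would buy a more self-contained argument (avoiding the appendix's potential-theoretic machinery) if it worked, but as sketched it has real gaps.

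First, your base case is inadmissible: by definition \eqref{a1} requires strictly $p_2<2^{\ast}<p_1$, so $\|\cdot\|_{2^{\ast},2^{\ast},R_\alpha}$ is not a starting point, and more importantly a starting point is not for free. The heavy lifting in the paper is Lemma \ref{l35}, which \emph{constructs} a first admissible pair with decomposition $|u_\alpha|\leq v_1+v_2$ by inserting the bubble decomposition $u_\alpha=u+\sum_j\rho_{x_{\alpha,j},R_{\alpha,j}}(U_j)+\omega_\alpha$ of Theorem \ref{p21} into the Green operator $G$; the factor $R_{\alpha}^{\frac{N}{2^{\ast}}-\frac{N}{p_2}}$ on $v_2$ is not an artifact of iteration but is computed directly from the scaling of the bubbles $\rho_{x_{\alpha,j},R_{\alpha,j}}(U_j)$ and the Kelvin-transform decay \eqref{a27}. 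Your sketch never explains how the $R_\alpha$-weighted component of the decomposition first appears.

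Second, testing \eqref{P} with $u_\alpha T_M(u_\alpha)^{2\beta}$ and applying Sobolev yields a single integral bound on $\int_\Omega|\nabla(u_\alpha T_M(u_\alpha)^\beta)|^2$; this does not by itself supply a new pointwise majorant $|u_\alpha|\leq v_1'+v_2'$ with $v_1'\in L^{p_1'}$ and $v_2'\in L^{p_2'}$ carrying the correct $R_\alpha$ weight. If your intent is to take $v_1'=T_M(u_\alpha)$ and $v_2'=(|u_\alpha|-M)_+$ with $M\sim R_\alpha^{(N-2)/2}$, you still need a separate argument that the tail $(|u_\alpha|-M)_+$ obeys the low-exponent estimate with the precise $R_\alpha^{\frac{N}{2^{\ast}}-\frac{N}{p_2'}}$ factor; this is not produced by the displayed energy estimate alone and would require essentially the same bubble-by-bubble computation that Lemma \ref{l35} performs. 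Finally, your attribution of the lower threshold to the expansion of the weight via \eqref{app17}(3) is off: in the paper this threshold $p_2>\left(1-\frac{1}{2\alpha}\right)2^{\ast}$ is governed by the admissible range of the Sobolev-type bound $\|w\|_{Nq(2\alpha-1)/(N-2\alpha q)}\leq C\|f\|_q^{1/(2\alpha-1)}$ of Proposition \ref{app1}, and by the induced ranges of $q_1,q_2$ in Lemma \ref{l34}, not by integrability of a weight remainder.
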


\begin{lemma}\label{l31}
Assume that $\Omega_1$ is a bounded domain satisfying $\Omega \subset \Omega_1$. For any functions
$f_1(x) \geq 0$ and $f_2(x) \geq 0$ , let $w \geq 0$  be the solution of
\begin{equation} \label{a2}
\left\{
\begin{aligned} 
 &-\emph{div}((1+|\nabla w|^2)^{\alpha-1}\nabla w)=f_1(x)+f_2(x)\ \ \ \mbox{in}\ \Omega_1,\\
&w=0,\ \  \mbox{on}\ \partial\Omega_1.
\end{aligned}
\right.
\end{equation}
Let $w_i$, $i=1,2$, be the solution of
\begin{equation} \label{ab2}
\left\{
\begin{aligned} 
 &-\emph{div}((1+|\nabla w_i|^2)^{\alpha-1}\nabla w)=f_i(x)\ \ \ \mbox{in}\ \Omega_1,\\
&w_i=0,\ \  \mbox{on}\ \partial\Omega_1.
\end{aligned}
\right.
\end{equation}
Then, there is a constant $C>0$, depending only on $r =\frac{1}{3}\mbox{dist}(\Omega, \partial \Omega_1)$,
such that
\begin{align}\label{ab3}
w(x)\leq  C\inf_{y\in B_r(x_0)}w(y)+Cw_1(x)+Cw_1(x), \mbox{\ for \ all}\ x\in \Omega.
\end{align}

\end{lemma}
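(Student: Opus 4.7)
\noindent\textbf{Proof plan for Lemma~\ref{l31}.} The strategy is to linearize the Sacks--Uhlenbeck operator $A_\alpha(u):=-\mathrm{div}((1+|\nabla u|^2)^{\alpha-1}\nabla u)$ around $w_1$ (respectively around $0$), thereby reducing the problem to a comparison of Green's functions of two uniformly elliptic divergence-form operators, and then to absorb the pointwise remainder via a weak Harnack inequality applied to $w$.

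\smallskip\noindent
First, since $f_1,f_2\ge 0$ and $A_\alpha$ is strictly monotone for $\alpha>1$, the weak maximum principle gives $w,w_1,w_2\ge 0$ and $w\ge w_i$ throughout $\Omega_1$. By the fundamental theorem of calculus,
\begin{equation*}
A_\alpha(\nabla w)-A_\alpha(\nabla w_1)=\widetilde A(x)\nabla v,\qquad \widetilde A(x):=\int_0^1 DA_\alpha\bigl(t\nabla w+(1-t)\nabla w_1\bigr)\,dt,
\end{equation*}
so the non-negative excess $v:=w-w_1$ is a weak solution of the linear equation $-\mathrm{div}(\widetilde A\nabla v)=f_2$ in $\Omega_1$ with $v=0$ on $\partial\Omega_1$. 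The analogous linearization of the equation for $w_2$ around zero (using $A_\alpha(0)=0$) produces a uniformly elliptic matrix field $B_2$ with $-\mathrm{div}(B_2\nabla w_2)=f_2$ and $w_2=0$ on $\partial\Omega_1$. The structural inequalities \eqref{app17} ensure that $\widetilde A$ and $B_2$ share the same ellipticity bounds.

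\smallskip\noindent
Second, the choice $r=\tfrac13\mathrm{dist}(\Omega,\partial\Omega_1)$ guarantees $B_{2r}(x)\subset\Omega_1$ for every $x\in\Omega$. I would then use the Green's-function representations $v(x)=\int_{\Omega_1}\widetilde G(x,y)f_2(y)\,dy$ and $w_2(x)=\int_{\Omega_1}G_2(x,y)f_2(y)\,dy$, together with the two-sided pointwise bounds of Littman--Stampacchia--Weinberger for Green's functions of uniformly elliptic divergence-form operators, to get $\widetilde G(x,y)\le C\,G_2(x,y)$ pointwise. This yields $v(x)\le C w_2(x)$ modulo a Harnack remainder $C\inf_{y\in B_r(x_0)}w(y)$, the remainder being obtained by chaining the classical weak Harnack inequality for the non-negative supersolution $w$ through overlapping balls of radius $r$ covering $\Omega$. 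Adding $w_1(x)=w(x)-v(x)$ on both sides gives the claimed bound.

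\smallskip\noindent
The main obstacle is uniformity: the ellipticity constants of $\widetilde A$ and $B_2$ must be controlled independently of $\alpha\to 1$ and of the local gradient magnitudes. The inequalities \eqref{app17} bound $DA_\alpha(\xi)$ two-sidedly between $(1+|\xi|^2)^{\alpha-1}I$ and $(2\alpha-1)(1+|\xi|^2)^{\alpha-1}I$, so the ellipticity \emph{ratio} stays bounded by $2\alpha-1$, while the absolute scale is absorbed into the Harnack constant via the local energy bounds on $\nabla u_\alpha$ developed in Section~2. A subsidiary technical point is the pointwise comparability of $\widetilde G$ and $G_2$ on the same domain, which is standard once common uniform-ellipticity bounds are in hand.
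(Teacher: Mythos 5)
Your approach and the paper's are fundamentally different, and yours has a gap that I do not see how to repair.

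\smallskip\noindent\textbf{What the paper does.} The paper's proof of Lemma~\ref{l31} is a direct application of nonlinear potential theory: it invokes the Kuusi--Mingione pointwise Wolff-potential estimate \cite[Theorem~2]{kuu} for equations of $p$-Laplacian type,
\begin{equation*}
w(x_0)\leq C_2\inf_{B_r(x_0)}w + C_3\,W_{1,2\alpha}(x_0,2r,f_1+f_2),
\end{equation*}
then uses the elementary subadditivity of $t\mapsto t^{1/(2\alpha-1)}$ (valid since $1/(2\alpha-1)\le 1$) to split $W_{1,2\alpha}(x_0,2r,f_1+f_2)$ into $W_{1,2\alpha}(x_0,2r,f_1)+W_{1,2\alpha}(x_0,2r,f_2)$, and finally applies the companion \emph{lower} Wolff-potential bound from the same theorem to dominate $W_{1,2\alpha}(x_0,2r,f_i)$ by $C\,w_i(x_0)$. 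That is the whole argument: the Wolff potential is the linear object that decouples the two right-hand sides; no linearization of the operator is attempted.

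\smallskip\noindent\textbf{Where your approach breaks.} Your plan rests on treating $\widetilde A(x)=\int_0^1 DA_\alpha(t\nabla w+(1-t)\nabla w_1)\,dt$ and $B_2$ as \emph{uniformly} elliptic matrices so that the Littman--Stampacchia--Weinberger comparison of Green's functions and the weak Harnack inequality apply. But for $\alpha>1$ the field $\xi\mapsto(1+|\xi|^2)^{\alpha-1}\xi$ is of $p$-growth with $p=2\alpha>2$; the eigenvalues of $DA_\alpha(\xi)$ lie between $(1+|\xi|^2)^{\alpha-1}$ and $(2\alpha-1)(1+|\xi|^2)^{\alpha-1}$, which is \emph{unbounded} in $|\xi|$. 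So $\widetilde A$ is elliptic only with constants that depend pointwise on the (unknown, possibly unbounded) gradients $\nabla w,\nabla w_1$, and $B_2$ with constants depending on $\nabla w_2$. LSW and the weak Harnack inequality both need ellipticity constants that are independent of the point; they simply do not apply here as stated. Your attempted fix --- absorbing the absolute scale ``into the Harnack constant via the local energy bounds on $\nabla u_\alpha$ from Section~2'' --- does not work: first, Lemma~\ref{l31} is stated for arbitrary data $f_1,f_2\ge 0$, not just the right-hand side of problem \eqref{P}, so there is no $u_\alpha$ to appeal to; second, even restricting to those solutions, Section~2 only gives $L^2$ (or $L^{2\alpha}$) control on the gradient, never the pointwise control you would need to bound the coefficient matrix. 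A further issue: even if one somehow had uniform ellipticity, $\widetilde A$ is frozen along the segment from $\nabla w_1$ to $\nabla w$, while $B_2$ is frozen along the segment from $0$ to $\nabla w_2$; these matrices can have eigenvalues at vastly different magnitudes at the same point, so the asserted pointwise comparison $\widetilde G(x,y)\le C\,G_2(x,y)$ would not follow from LSW, which compares each Green's function to that of the Laplacian only when the respective ellipticity constants are uniform. You would need the potential-theoretic tool (Wolff potentials / Kilpel\"ainen--Mal\'y / Kuusi--Mingione) that is designed precisely to bypass linearization for degenerate $p$-growth operators, which is exactly what the paper uses.
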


\begin{proof}
Let $r =\frac{1}{3}\mbox{dist}(\Omega, \partial \Omega_1)$. Then it follows from \cite[Theorem 2]{kuu}
 that for any $x_0\in \Omega$,
$$w(x_0)\leq C_2\inf_{x\in B_r(x_0)}w(x)+C_3W_{1,2\alpha}(x_0,2r,f_1+f_2),$$
where $W_{1,2\alpha}(x_0,r,f)$ is the Wolff potential for the function $f$:
$$W_{1,2\alpha}(x_0,r,f)=\int\limits_0^r\left(\int\limits_{B_t(x_0)}|f|\right)^{\frac{1}{2\alpha-1}}   \frac{dt}{t^{\frac{N-2\alpha}{2\alpha-1}+1}}.$$
On the other hand, it is easy to show that
$$W_{1,2\alpha}(x_0,2r,f_1+f_2)\leq 2^{\frac{1}{2\alpha-1}}W_{1,2\alpha}(x_0,2r,f_1)+2^{\frac{1}{2\alpha-1}}W_{1,2\alpha}(x_0,2r,f_2).$$
So, using \cite[Theorem 2]{kuu} again, we obtain
\begin{align}\label{a3}
w(x_0)&\leq C_2\inf_{x\in B_r(x_0)}w(x)+C_3W_{1,2\alpha}(x_0,2r,f_1)+C_3W_{1,2\alpha}(x_0,2r,f_2)\nonumber\\
&\leq  C_2\inf_{x\in B_r(x_0)}w(x)+C_4w_1(x_0)+C_5w_1(x_0).
\end{align}

\end{proof}

\begin{lemma}\label{l32}
Let $w$ be the solution of
\begin{equation} \label{a4}
\left\{
\begin{aligned} 
 &-\emph{div}((1+|\nabla w|^2)^{\alpha-1}\nabla w)=a(x)v^{2\alpha-1}\ \ \ \mbox{in}\ \Omega,\\
&w=0,\ \  \mbox{on}\ \partial\Omega,
\end{aligned}
\right.
\end{equation}
where $a(x) \geq 0$ and $v \geq  0$  are functions satisfying $a, v \in  L^{\infty}(\Omega)$. Then, for any $p_1 >2^{\ast}>p_2 >\left(1-\frac{1}{2\alpha}\right)2^{\ast}$,
there is a constant $C = C(p_1,p_2, |\Omega|)$, such that for any $R \geq 1$,
$$\|w\|_{p_1,p_2,R}\leq C\|a\|^{\frac{1}{2\alpha-1}}_{\frac{N}{2\alpha}}\|v\|_{p_1,p_2,R}.$$
\end{lemma}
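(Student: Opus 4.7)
The strategy mirrors the approach of Devillanova--Solimini in \cite{r3}, adapted to the Sacks--Uhlenbeck operator via the Wolff-potential estimates underlying Lemma \ref{l31}. Fix any $\beta > \|v\|_{p_1,p_2,R}$, and choose a decomposition $v \leq v_1 + v_2$ with $\|v_1\|_{p_1} \leq \beta$ and $\|v_2\|_{p_2} \leq \beta R^{N/2^{\ast} - N/p_2}$. By the elementary convexity inequality $(v_1+v_2)^{2\alpha-1} \leq C_\alpha(v_1^{2\alpha-1}+v_2^{2\alpha-1})$, the right-hand side of (\ref{a4}) is majorized by $C_\alpha(f_1+f_2)$ with $f_i := a\, v_i^{2\alpha-1}$. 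Let $w_i$ denote the solution of the Dirichlet problem (\ref{a4}) with right-hand side $f_i$ (on a suitable slightly enlarged domain $\Omega_1 \supset \Omega$, as in Lemma \ref{l31}). Applying Lemma \ref{l31} produces the pointwise bound $w \leq C\inf_{B_r(x_0)} w + C(w_1+w_2)$; since the infimum is controlled by the $L^{p_2}$-average of $w$, it can be absorbed or treated as an additional low-norm piece, so it suffices to estimate $w_1$ and $w_2$ separately.

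For the high-exponent piece $w_1$, select $q_1$ by $\tfrac{1}{q_1} = \tfrac{2\alpha}{N} + \tfrac{2\alpha-1}{p_1}$; the assumption $p_1 > 2^{\ast}$ guarantees $q_1 \in (1,N/(2\alpha))$. H\"older gives
\begin{equation*}
\|f_1\|_{q_1} \leq \|a\|_{N/(2\alpha)} \,\|v_1^{2\alpha-1}\|_{p_1/(2\alpha-1)} = \|a\|_{N/(2\alpha)} \, \|v_1\|_{p_1}^{2\alpha-1}.
\end{equation*}
The Wolff-potential pointwise bound from \cite[Theorem 2]{kuu} applied to $w_1$, together with the $L^{q_1} \to L^{p_1}$ continuity of the nonlinear $(1,2\alpha)$-Riesz/Wolff operator, then yields
\begin{equation*}
\|w_1\|_{p_1} \leq C \|f_1\|_{q_1}^{1/(2\alpha-1)} \leq C \|a\|_{N/(2\alpha)}^{1/(2\alpha-1)} \,\|v_1\|_{p_1} \leq C \|a\|_{N/(2\alpha)}^{1/(2\alpha-1)} \beta.
\end{equation*}

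The analogous argument with exponent triple $(p_2,q_2,N/(2\alpha))$, where $\tfrac{1}{q_2} = \tfrac{2\alpha}{N} + \tfrac{2\alpha-1}{p_2}$, gives $\|w_2\|_{p_2} \leq C \|a\|_{N/(2\alpha)}^{1/(2\alpha-1)} \|v_2\|_{p_2} \leq C \|a\|_{N/(2\alpha)}^{1/(2\alpha-1)} \beta R^{N/2^{\ast}-N/p_2}$. The lower threshold $p_2 > (1-\tfrac{1}{2\alpha})2^{\ast}$ is exactly what forces $q_2 > 1$, so that H\"older and the Wolff bound both apply. Consequently the pair $(Cw_1, Cw_2)$ is an admissible decomposition in the definition of $\|\cdot\|_{p_1,p_2,R}$, so
\begin{equation*}
\|w\|_{p_1,p_2,R} \leq C \|a\|_{N/(2\alpha)}^{1/(2\alpha-1)} \beta,
\end{equation*}
and letting $\beta \downarrow \|v\|_{p_1,p_2,R}$ completes the proof.

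\textbf{Principal obstacle.} The genuine difficulty is the nonhomogeneity of the Sacks--Uhlenbeck operator $-\mathrm{div}((1+|\nabla \cdot|^2)^{\alpha-1}\nabla\cdot)$: it behaves like the $2\alpha$-Laplacian for large gradients and like the linear Laplacian for small gradients, so the quoted Wolff-potential bounds of Kuusi--Mingione type must be invoked with attention to the inhomogeneous term $1+|\nabla w|^2$. The inequalities (\ref{app17}) are the tool for taming this, and one has to check that the constants in the resulting Riesz/Wolff mapping properties depend only on $p_1,p_2$ and $|\Omega|$, not on $\alpha$ as $\alpha \downarrow 1$, so that the estimate is uniform in the approximation parameter---this uniformity is what makes the bound useful when combined with the bubble decomposition of Theorem \ref{p21}.
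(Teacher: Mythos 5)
Your proposal follows the same overall strategy as the paper: split $v\le v_1+v_2$ near-optimally, pass to an enlarged domain $\Omega_1\supset\Omega$, introduce the auxiliary solutions $w_1,w_2$ driven by the split right-hand sides, invoke Lemma~\ref{l31} to obtain a pointwise decomposition, and estimate the high- and low-exponent pieces separately. Your route to the individual $L^{p_i}$-bounds is slightly different in flavor — you appeal directly to the $L^{q}\to L^{p}$ mapping of the $(1,2\alpha)$-Wolff/Riesz potential from Kuusi--Mingione, whereas the paper obtains $\|w_i\|_{p_i}\le C\|a\|_{N/(2\alpha)}^{1/(2\alpha-1)}\|v_i\|_{p_i}$ through Proposition~\ref{app1} and Corollary~\ref{app3}, which are proved by a Caccioppoli test-function argument followed by Sobolev embedding. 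Both give the same estimate with the same exponent arithmetic and are acceptable. You also omit the intermediate comparison step: $w$ itself does not solve a problem with a split right-hand side, so one should first compare $w\le\tilde w$ where $\tilde w$ solves the equation on $\Omega_1$ with right-hand side $2^{2\alpha-1}a(v_1^{2\alpha-1}+v_2^{2\alpha-1})$, and then apply Lemma~\ref{l31} to $\tilde w$, not to $w$.

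The genuine gap is your treatment of the constant term $C\inf_{B_r(x_0)}\tilde w$ coming out of Lemma~\ref{l31}. You write that it ``is controlled by the $L^{p_2}$-average of $w$'' and ``can be absorbed or treated as an additional low-norm piece,'' but this is circular: $\|w\|_{p_2}$ (or $\|w\|_{p_1,p_2,R}$) is exactly the quantity you are trying to estimate, and nothing gives you a smallness constant to absorb it. The paper avoids the circularity by bounding $\inf_{B_r(x_0)}\tilde w\le Cr^{-N/p_2}\|\tilde w\|_{p_2}$ and then using Corollary~\ref{app3} \emph{a priori} on $\tilde w$ to get $\|\tilde w\|_{p_2}\le C\|a\|^{1/(2\alpha-1)}_{N/(2\alpha)}\left(\|v_1\|_{p_1}+\|v_2\|_{p_2}\right)$, exploiting $\|v_1\|_{p_2}\le C(|\Omega|)\|v_1\|_{p_1}$ on the bounded domain. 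The resulting constant is then placed into the $p_1$-controlled component $\hat w_1$ of the decomposition of $w$, not the $p_2$-controlled one, since a constant on a bounded domain lies in every $L^{p}$ with an $|\Omega|$-dependent norm. This is precisely where the stated dependence $C=C(p_1,p_2,|\Omega|)$ enters, and without it your admissible decomposition is not closed.
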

\begin{proof}
For any small $\theta >0$, let $v_1 \geq 0$ and $v_2 \geq  0$ be the functions such that $v \leq  v_1 + v_2$, and
(\ref{a1}) holds with $\alpha  =	\|v\|_{p_1, p_2, R} +\theta$ . Choose a domain $\Omega_1$  with $\Omega\subset \Omega_1$.
We let $a(x) = 0$ and let
$v_i = 0$ in $\Omega_1\setminus\Omega$.  Consider
\begin{equation} \label{a5}
\left\{
\begin{aligned} 
 &-\mbox{div}((1+|\nabla w_i|^2)^{\alpha-1}\nabla w_i)=a(x)v_i^{2\alpha-1}\ \ \ \mbox{in}\ \Omega_1,\\
&w_i=0,\ \  \mbox{on}\ \partial\Omega_1.
\end{aligned}
\right.
\end{equation}
Then, it follows from Corollary \ref{app3} that
$$\|w_i\|_{p_i}\leq C\|a\|^{\frac{1}{2\alpha-1}}_{\frac{N}{2\alpha}}\|v\|_{p_i},\ \ i=1,2.$$
On the other hand, by the comparison principle, we can deduce $w \leq \tilde{w}$ in $\Omega$, where  $\tilde{w}$ is the
solution of
\begin{equation} \label{a6}
\left\{
\begin{aligned} 
 &-\mbox{div}((1+|\nabla \tilde{w}|^2)^{\alpha-1}\nabla \tilde{w})=2^{2\alpha-1}a(x)(v_1^{2\alpha-1}+v_2^{2\alpha-1}),\ \ \ \mbox{in}\ \Omega,\\
&\tilde{w}=0,\ \  \mbox{on}\ \partial\Omega.
\end{aligned}
\right.
\end{equation}
It follows from Corollary \ref{app3} again that
\begin{align}\label{a7}
\|\tilde{w}\|_{p_2}&\leq C\|a\|^{\frac{1}{2\alpha-1}}_{\frac{N}{2\alpha}}\|(v_1^{2\alpha-1}+v_2^{2\alpha-1})^{\frac{1}{2\alpha-1}}\|_{p_2}\nonumber\\
&\leq C\|a\|^{\frac{1}{2\alpha-1}}_{\frac{N}{2\alpha}}\|(\|v_1\|_{p_2}+\|v_2\|_{p_2})\nonumber\\
&\leq C\|a\|^{\frac{1}{2\alpha-1}}_{\frac{N}{2\alpha}}\|(\|v_1\|_{p_1}+\|v_2\|_{p_2}).
\end{align}
As before, let $r = \frac{1}{3}\mbox{dist}(\Omega, \partial \Omega_1)$. So, we obtain
\begin{align}\label{a8}
   \inf_{x\in B_r(x_0)} \tilde{w}(x)&\leq Cr^{-N}\int\limits_{B_{r}(x_0)}  \tilde{w}dx\leq Cr^{-\frac{N}{p_2}}\|\tilde{w}\|_{p_2} \nonumber \\
   & \leq C\|a\|^{\frac{1}{2\alpha-1}}_{\frac{N}{2\alpha}}\|(\|v_1\|_{p_1}+\|v_2\|_{p_2}) \ \ \mbox{for\ all } x_0\in \Omega.
\end{align}
On the other hand, it follows from Lemma \ref{l31} that there is $C_1 > 0$, such that
\begin{align}\label{a9}
\tilde{w}(x_0)&\leq C \inf_{x\in B_r(x_0)} \tilde{w}(x)+Cw_1(x_0)+Cw_2(x_0) \nonumber \\
   & \leq C\|a\|^{\frac{1}{2\alpha-1}}_{\frac{N}{2\alpha}}\|(\|v_1\|_{p_1}+\|v_2\|_{p_2})+Cw_1(x_0)+Cw_2(x_0).
\end{align}
Combining $w(x) \leq  \tilde{w}(x)$, for all $x \in  \Omega$, and (\ref{a9}), we obtain
\begin{equation}\label{a10}
 w(x)\leq \hat{w}_1(x)+\hat{w}_2(x),\ \mbox{for\ all} x\in \Omega,
\end{equation}
where
$$ \hat{w}_1(x)=C\|a\|^{\frac{1}{2\alpha-1}}_{\frac{N}{2\alpha}}(\|v_1\|_{p_1}+\|v_2\|_{p_2})+C_1w_1(x),$$
and $$\hat{w}_2(x)=C_1w_2(x).$$
Noting that $R^{\frac{N}{2^{\ast}}-\frac{N}{p_2}}\leq 1$  as $R\geq 1$, by \eqref{a7}, we deduce
\begin{align}\label{a11}
\|\hat{w}_1\|_{p_1}&\leq C\|a\|^{\frac{1}{2\alpha-1}}_{\frac{N}{2\alpha}}(\|v_1\|_{p_1}+\|v_2\|_{p_2})+C\|w_1\|_{p_1}\nonumber\\
&\leq C'\|a\|^{\frac{1}{2\alpha-1}}_{\frac{N}{2\alpha}}(\|v_1\|_{p_1}+\|v_2\|_{p_2})\nonumber\\
&\leq C\|a\|^{\frac{1}{2\alpha-1}}_{\frac{N}{2\alpha}}(\|v_1\|_{p_1,p_2,R}+\theta),
\end{align}
and
\begin{align}\label{a12}
\|w_2\|_{p_2}&\leq C\|a\|^{\frac{1}{2\alpha-1}}_{\frac{N}{2\alpha}}\|v_2\|_{p_2}
\leq C'\lambda^{\frac{N}{2^{\ast}}-\frac{N}{p_2}}\|a\|^{\frac{1}{2\alpha-1}}_{\frac{N}{2\alpha}}(\|v_1\|_{p_1,p_2,R}+\theta).
\end{align}
So, the result follows.
\end{proof}

\begin{lemma}\label{l34}
Let $w$ be a weak solution of
\begin{equation} \label{a13}
\left\{
\begin{aligned} 
 &-\emph{div}((1+|\nabla w|^2)^{\alpha-1}\nabla w)=2v^{2^{\ast}-1}+A\ \ \ \mbox{in}\ \Omega,\\
&w=0,\ \  \mbox{on}\ \partial\Omega,
\end{aligned}
\right.
\end{equation}
where $v \geq  0$ and $v \in L^{\infty}(\Omega)$.  For any $p_1,p_2$ with  $2^{\ast}-1<p_2 <2^{\ast}<p_1 <\frac{N(2^{\ast}-1)}{2\alpha}$, let
\begin{equation}\label{19}
\frac{1}{q_1}=\frac{2^{\ast}-1}{(2\alpha-1)p_i}-\frac{2\alpha}{N(2\alpha-1)},i=1,2.
\end{equation}
Then there is a constant $C = C(p_1, p_2)$, such that for any $R >0$,

$$\|w\|_{q_1,q_2,R}\leq C\|v\|^{\frac{2^{\ast}-1}{2\alpha-1}}_{p_1,p_2,R}+C.$$
\end{lemma}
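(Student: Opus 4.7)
The strategy is to mimic Lemma \ref{l32}, replacing the $L^p\to L^p$ estimate of Corollary \ref{app3} by a Sobolev--Marcinkiewicz estimate that captures the gain of integrability from the critical source $v^{2^{\ast}-1}$. First I would fix $\theta>0$, set $\beta=\|v\|_{p_1,p_2,R}+\theta$, and choose nonnegative $v_1,v_2$ with $v\leq v_1+v_2$, $\|v_1\|_{p_1}\leq\beta$ and $\|v_2\|_{p_2}\leq\beta R^{N/2^{\ast}-N/p_2}$. By convexity, $v^{2^{\ast}-1}\leq 2^{2^{\ast}-2}(v_1^{2^{\ast}-1}+v_2^{2^{\ast}-1})$. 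Extending $v_1,v_2$ by zero to a slightly larger domain $\Omega_1\supset\Omega$, let $w_i$ ($i=1,2$) and $w_A$ be the zero-boundary solutions on $\Omega_1$ of the Sacks--Uhlenbeck PDE with right-hand sides $v_i^{2^{\ast}-1}$ and $A$ respectively, and let $\tilde w$ be the zero-boundary solution on $\Omega$ with right-hand side $2^{2^{\ast}-1}(v_1^{2^{\ast}-1}+v_2^{2^{\ast}-1})+A$. Monotonicity of the operator gives $w\leq\tilde w$ on $\Omega$.

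The central analytic ingredient will be the Marcinkiewicz--Sobolev estimate: if $-\emph{div}((1+|\nabla z|^2)^{\alpha-1}\nabla z)=f\geq 0$ on a bounded domain with zero boundary data and $f\in L^s$ with $s>N/(2\alpha)$, then $\|z\|_q\leq C\|f\|_s^{1/(2\alpha-1)}$, where $\frac{1}{q}=\frac{1}{s(2\alpha-1)}-\frac{2\alpha}{N(2\alpha-1)}$. This is the Sacks--Uhlenbeck analogue of the standard $2\alpha$-Laplacian Sobolev--Marcinkiewicz estimate and should follow from the Wolff potential bounds of Kuusi--Mingione (already invoked in Lemma \ref{l31}) together with a truncation/rearrangement argument. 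Applied with $f=v_i^{2^{\ast}-1}$ and $s=p_i/(2^{\ast}-1)$, the admissibility $s>N/(2\alpha)$ is ensured by the hypothesis $p_i<N(2^{\ast}-1)/(2\alpha)$, and the resulting exponent $q$ matches $q_i$ in \eqref{19} exactly; hence $\|w_i\|_{q_i}\leq C\|v_i\|_{p_i}^{(2^{\ast}-1)/(2\alpha-1)}$ for $i=1,2$. Since $A$ is bounded, $\|w_A\|_\infty\leq C$ by standard regularity.

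To recover the mixed norm for $w$, apply Lemma \ref{l31} to $\tilde w$ (grouping the three pieces $v_1^{2^{\ast}-1}$, $v_2^{2^{\ast}-1}$, $A$ if necessary, using subadditivity of the Wolff potential): for every $x_0\in\Omega$,
\begin{equation*}
\tilde w(x_0)\leq C\inf_{B_r(x_0)}\tilde w+C\bigl(w_1(x_0)+w_2(x_0)+w_A(x_0)\bigr),\qquad r=\tfrac{1}{3}\emph{dist}(\Omega,\partial\Omega_1).
\end{equation*}
A global $L^{q_2}$ bound on $\tilde w$ via the Step~2 estimate, combined with $\inf_{B_r}\tilde w\leq Cr^{-N/q_2}\|\tilde w\|_{q_2}$, shows the infimum is uniformly bounded. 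Setting $\hat w_1:=Cw_1+C$ (absorbing the infimum and $w_A$ contributions into the constant) and $\hat w_2:=Cw_2$, we get $w\leq\hat w_1+\hat w_2$ with $\|\hat w_1\|_{q_1}\leq C\beta^{(2^{\ast}-1)/(2\alpha-1)}+C$ and $\|\hat w_2\|_{q_2}\leq C\beta^{(2^{\ast}-1)/(2\alpha-1)}R^{(N/2^{\ast}-N/p_2)(2^{\ast}-1)/(2\alpha-1)}$. A direct computation (using $2^{\ast}(N-2)=2N$) shows $(N/2^{\ast}-N/p_2)(2^{\ast}-1)/(2\alpha-1)\leq N/2^{\ast}-N/q_2$ for all $\alpha\geq 1$ (with equality at $\alpha=1$), so since $R\geq 1$ the decomposition matches the definition of $\|w\|_{q_1,q_2,R}$. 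Letting $\theta\to 0$ completes the proof.

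The principal obstacle is establishing the Sobolev--Marcinkiewicz estimate of the second paragraph uniformly in $\alpha$ close to $1$. Unlike the pure $2\alpha$-Laplacian, the Sacks--Uhlenbeck operator is only asymptotically $(2\alpha)$-Laplacian for large gradients, so the Kuusi--Mingione potential-theoretic argument must be adapted so that its constants (and the threshold $s>N/(2\alpha)$) do not degenerate as $\alpha\to 1$; once that estimate is secured, the convexity splitting, comparison principle, Wolff-potential bound and algebraic check of $R$-exponents are all routine variations on the scheme used in Lemma \ref{l32}.
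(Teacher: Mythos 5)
Your plan is essentially the paper's own proof: decompose $v\le v_1+v_2$ with $\beta=\|v\|_{p_1,p_2,R}+\theta$, dominate $w$ by $\tilde w$ solving with right-hand side $C(v_1^{2^*-1}+v_2^{2^*-1})+A$, split via Lemma~\ref{l31}, and estimate each piece. The one correction worth flagging is that the ``Sobolev--Marcinkiewicz'' estimate you propose to derive from Kuusi--Mingione potential bounds is precisely Proposition~\ref{app1}, already proved in the appendix by a direct test-function (Moser-type) iteration, so there is nothing new to establish there; also your admissibility condition has a sign typo ($s>N/(2\alpha)$ should read $s<N/(2\alpha)$, which is what your subsequent check $p_i<N(2^*-1)/(2\alpha)$ actually enforces). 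On the final exponent comparison, you observe $(N/2^*-N/p_2)\tfrac{2^*-1}{2\alpha-1}\le N/2^*-N/q_2$ with equality only at $\alpha=1$; the paper asserts equality, but a direct computation gives the difference $\tfrac{N(1-\alpha)}{2\alpha-1}\le 0$, so your inequality version is the correct one, and since both exponents are negative and $R\ge 1$ the bound still closes. In short, the proposal is sound and structurally identical to the paper, modulo the typo and the (more careful) exponent inequality.
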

\begin{proof}
For any small $\theta >0$, let $v_1 \geq  0$ and $v_2 \geq 0$  be the functions such that $v \leq  v_1 + v_2$, and
(\ref{a1}) holds with $\alpha =	\|v\|_{p_1,p_2,R} +\theta$. Let $\tilde{w}$ be the solution of
\begin{equation} \label{a14}
\left\{
\begin{aligned} 
 &-\mbox{div}((1+|\nabla \tilde{w}|^2)^{\alpha-1}\nabla \tilde{w})=2^{2^{\ast}}v_1^{2^{\ast}-1}+2^{2^{\ast}}v_2^{2^{\ast}-1}+A\ \ \ \mbox{in}\ \Omega_1,\\
&\tilde{w}=0,\ \  \mbox{on}\ \partial\Omega_1.
\end{aligned}
\right.
\end{equation}
Then $w \leq  \tilde{w}$, It follows from Proposition \ref{app1} that there is a $\bar{p} > 0$, such that 	$\|\tilde{w}\|p \leq C$. Thus,
$$\inf_{x\in B_r(x_0)} \tilde{w}(x)\leq C,\ \ \mbox{for\ all} \ x_0\in \Omega$$

Now we consider
\begin{equation} \label{a15}
\left\{
\begin{aligned} 
 &-\mbox{div}((1+|\nabla w_1|^2)^{\alpha-1}\nabla w_1)=2^{2^{\ast}}v_1^{2^{\ast}-1}+A\ \ \ \mbox{in}\ \Omega_1,\\
&w_1=0,\ \  \mbox{on}\ \partial\Omega_1,
\end{aligned}
\right.
\end{equation}
and

\begin{equation} \label{a16}
\left\{
\begin{aligned} 
 &-\mbox{div}((1+|\nabla w_2|^2)^{\alpha-1}\nabla w_2)=2^{2^{\ast}}v_2^{2^{\ast}-1}\ \ \ \mbox{in}\ \Omega_1,\\
& w_2=0,\ \  \mbox{on}\ \partial\Omega_1.
\end{aligned}
\right.
\end{equation}
Then, by Lemma \ref{l31},
$$\tilde{w}(x_0)\leq C +Cw_1(x_0)+Cw_2(x_0) ,\mbox{for\ all}\ x_0\in \Omega.$$

Let $\hat{p}_i=\frac{p_i}{2^{\ast}-1}$, then $q_i=\frac{N(2\alpha-1)\hat{p}_i}{N-2\alpha\hat{p}_i}$, $i=1,2.$
Besides, for $p_i\in \left(2^{\ast}-1,\frac{N(2^{\ast}-1)}{2\alpha}\right)$, we have $\hat{p}_i\in \left(p_i, \frac{N}{2\alpha}\right)$.

By Proposition \ref{app1}, we have

\begin{align}\label{a17}
\|C+Cw_1\|_{q_1}&\leq C'+C\|w_1\|_{q_1} \leq C'+C'\|v^{2^{\ast}-1}_1+A\|_{\hat{p}_1}^{\frac{1}{2\alpha-1}}\leq C\left(1+\|v_1\|_{p_1}^{\frac{2^{\ast}-1}{2\alpha-1}}\right)\nonumber\\
&\leq C\left(\left(\theta+\|v\|_{p_1,p_2,R}\right)^{\frac{2^{\ast}-1}{2\alpha-1}}+1\right),
\end{align}
and
\begin{align}\label{a18}
\|w_2\|_{q_2}&\leq C\|v_2\|_{p_2}^{\frac{2^{\ast}-1}{2\alpha-1}}
\leq C\left(R^{\frac{N}{2^{\ast}}-\frac{N}{p_2}}\left(\theta+\|v_2\|_{p_1,p_2,R}\right)\right)^{\frac{2^{\ast}-1}{2\alpha-1}},\nonumber\\
&=CR^{\frac{N}{2^{\ast}}-\frac{N}{q_2}}\left(\theta+\|v\|_{p_1,p_2,R}\right)^{\frac{2^{\ast}-1}{2\alpha-1}},
\end{align}
since $$\left(\frac{N}{2^{\ast}}-\frac{N}{p_2}\right)\frac{2^{\ast}-1}{2\alpha-1}-\left(\frac{N}{2^{\ast}}-\frac{N}{q_2}\right)
=N\left(\frac{2\alpha}{N(2\alpha-1)}-\frac{2^{\ast}-1}{p_2(2\alpha-1)}+\frac{1}{q_2}\right)=0.$$
So, the result follows.
\end{proof}

Let $u_{\alpha}$ be a solution of \eqref{P}, and $w_{\alpha}$ be the solution of
\begin{equation} \label{a19}
\left\{
\begin{aligned} 
 &-\mbox{div}((1+|\nabla w|^2)^{\alpha-1}\nabla w)=2u_{\alpha}^{2^{\ast}-1}+A\ \ \ \mbox{in}\ \Omega,\\
&w=0,\ \  \mbox{on}\ \partial\Omega,
\end{aligned}
\right.
\end{equation}
where $A > 0$ is a large constant. By the comparison principle, we have
\begin{equation}\label{a20}
|u_{\alpha}(x)|\leq w_{\alpha} (x), x\in \Omega.
\end{equation}

\begin{lemma}\label{l35}
Let $w_{\alpha}(x)$ be a solution of (\ref{a19}). Then there exist  constants $C>0$, and $p_1, p_2\in \left(\left(1-\frac{1}{2\alpha}\right)
2^{\ast},+\infty\right)$ with $p_2<2^{\ast}<p_1$, such that
$$\|w_{\alpha}\|_{p_1,p_2,R_{\alpha}}\leq C.$$
\end{lemma}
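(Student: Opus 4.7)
My plan is to combine the pointwise comparison $|u_{\alpha}|\leq w_{\alpha}$ from \eqref{a20} with Lemma \ref{l34}, bootstrapping from an initial estimate that we read off from the concentration decomposition of Theorem \ref{p21}. The direct approach of invoking Lemma \ref{l34} fails at the outset because one needs some bound on $\|u_{\alpha}\|_{p_{1},p_{2},R_{\alpha}}$ before the lemma can deliver an estimate on $w_{\alpha}$; the initial bound has to be produced by hand, using the blow-up structure of $u_{\alpha}$.

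For the initial estimate I would use that, by Theorem \ref{p21}, $u_{\alpha} = u + \sum_{j=1}^{k}(R_{\alpha,j})^{(N-2)/2}U_{j}(R_{\alpha,j}(x-x_{\alpha,j})) + r_{\alpha}$ with $\|r_{\alpha}\|_{H^{1}_{0}}\to 0$. The weak limit $u$ solves \eqref{app18}, so classical elliptic regularity gives $u \in L^{\infty}(\Omega)$, and each profile $U_{j}$ decays like $|x|^{-(N-2)}$, hence $U_{j} \in L^{p_{2}}(\mathbb{R}^{N})$ for every $p_{2} > N/(N-2)$. Setting $R_{\alpha} := \max_{j} R_{\alpha,j}$ and taking $u_{1} := u+r_{\alpha}$, $u_{2} := \sum_{j}(R_{\alpha,j})^{(N-2)/2}U_{j}(R_{\alpha,j}(x-x_{\alpha,j}))$, a direct scaling computation gives $\|u_{2}\|_{p_{2}} \leq C R_{\alpha}^{N/2^{\ast} - N/p_{2}}$ for any $p_{2} \in (2^{\ast}-1,2^{\ast})$, whereas $\|u_{1}\|_{p_{1}}$ is uniformly bounded for every $p_{1}\in (2^{\ast}, N(2^{\ast}-1)/(2\alpha))$. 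Together these furnish the initial bound $\|u_{\alpha}\|_{p_{1}^{(0)}, p_{2}^{(0)}, R_{\alpha}} \leq C$.

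Next I would feed this into Lemma \ref{l34} with $v = u_{\alpha}$: since $w_{\alpha}$ solves \eqref{a19} with right-hand side $2u_{\alpha}^{2^{\ast}-1}+A$, the lemma outputs $\|w_{\alpha}\|_{q_{1},q_{2},R_{\alpha}} \leq C$ with $(q_{1},q_{2})$ given by \eqref{19}. Choosing $p_{1}^{(0)},p_{2}^{(0)}$ close enough to $2^{\ast}$ and using $1<\alpha<N/(N-2)$, a direct check shows that $(q_{1},q_{2})$ again straddles $2^{\ast}$ and lies in $((1-\tfrac{1}{2\alpha})2^{\ast},\infty)$, which is the conclusion. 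If a single application does not suffice, one can invoke $|u_{\alpha}|\leq w_{\alpha}$ once more to transfer estimates back to $u_{\alpha}$ and reapply Lemma \ref{l34}; because the exponent map \eqref{19} has an attracting fixed point inside the admissible region, the iteration terminates after finitely many steps.

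The hardest step will be the initial $L^{p_{1}}$ control of the remainder $r_{\alpha}$: the decomposition in Theorem \ref{p21} is stated only in $H^{1}_{0}$, and to absorb $r_{\alpha}$ into $u_{1}$ one needs a uniform bound in a better Lebesgue exponent. I would get this by combining Lemma \ref{l22}, which gives $u_{\alpha}\to u$ strongly in $H^{1}_{\mathrm{loc}}(\Omega\setminus\{x_{1},\dots,x_{l}\})$, with a Moser-type iteration on \eqref{P} away from the concentration set $\bigcup_{j} B(x_{\alpha,j},\rho/R_{\alpha,j})$ to upgrade local $H^{1}$ convergence to uniform local $L^{\infty}$ bounds. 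Once this initialization is secured, the passage through Lemma \ref{l34} is essentially routine.
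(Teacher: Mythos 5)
The approach you outline is genuinely different from the paper's, and it has a gap at the point you yourself flag as the hardest: the initialization.

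In the paper, Lemma \ref{l35} is the initialization of the whole scheme and does \emph{not} invoke Lemma \ref{l34}. The key idea is a multiplicative, self-referential estimate: using $|u_{\alpha}| \leq w_{\alpha}$ one rewrites
$|u_{\alpha}|^{2^{\ast}-1} = |u_{\alpha}|^{2^{\ast}-2\alpha}|u_{\alpha}|^{2\alpha-1} \leq C\left(|u|^{2^{\ast}-2\alpha}+\sum_{j}|\rho_{x_{\alpha,j},R_{\alpha,j}}(U_{j})|^{2^{\ast}-2\alpha}+|\omega_{\alpha}|^{2^{\ast}-2\alpha}\right)w_{\alpha}^{2\alpha-1}$,
compares $w_{\alpha}$ with the solution $\tilde w_{\alpha}$ of the problem with this right-hand side, and splits the Wolff-potential estimate of Lemma \ref{l31} over the three terms. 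The $u$- and $\rho(U_{j})$-coefficients are handled with explicit $L^{p}$ bounds (elliptic regularity for $u$, the Kelvin-transform decay \eqref{a27} for the bubbles), while the $\omega_{\alpha}$-coefficient is not estimated in an absolute $L^{p}$ norm at all: it produces $\||\omega_{\alpha}|^{2^{\ast}-2\alpha}\|_{N/(2\alpha)}^{1/(2\alpha-1)}\,\|w_{\alpha}\|_{p_{1},p_{2},R_{\alpha}}$, and since $\||\omega_{\alpha}|^{2^{\ast}-2\alpha}\|_{N/(2\alpha)}\to 0$ this term can be absorbed on the left. That contraction is what closes the argument.

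Your plan, by contrast, tries to bound $\|u_{\alpha}\|_{p_{1},p_{2},R_{\alpha}}$ directly from the decomposition and then feed it into Lemma \ref{l34}. The problem is $\omega_{\alpha}$. It cannot go into $u_{1}$: global compactness gives control of $\omega_{\alpha}$ only in $H_{0}^{1}$, hence in $L^{2^{\ast}}$, and nothing in $L^{p_{1}}$ for $p_{1}>2^{\ast}$. The Moser-iteration idea you propose controls $u_{\alpha}$, not $\omega_{\alpha}$, away from the bubbles, so it does not produce the needed uniform $L^{p_{1}}$ bound of the remainder. Nor can $\omega_{\alpha}$ go into $u_{2}$: the constraint there is $\|u_{2}\|_{p_{2}} \leq \beta R_{\alpha}^{N/2^{\ast}-N/p_{2}}$, and the exponent $N/2^{\ast}-N/p_{2}$ is negative, so the allowed $L^{p_{2}}$ size shrinks as $R_{\alpha}\to\infty$. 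Theorem \ref{p21} gives only $\|\omega_{\alpha}\|_{H_{0}^{1}}\to 0$, with no rate tied to the concentration scale $R_{\alpha}$, so the required quantitative decay of $\|\omega_{\alpha}\|_{p_{2}}$ is simply not available. (There is also a secondary slip: $R_{\alpha}$ should be the slowest concentration rate, i.e.\ $\min_{j}R_{\alpha,j}$ — that is what the safe-region analysis and the proof of Theorem \ref{rrr} use — not $\max_{j}R_{\alpha,j}$; with the maximum, the bubble part $u_{2}$ itself would not satisfy the $u_{2}$-bound when the rates differ.) The missing idea is precisely the factorization that leaves a power of $w_{\alpha}$ on the right-hand side, turning the $\omega_{\alpha}$-contribution into a small coefficient times the very quantity you want to bound, which is why the paper can initialize without ever estimating $\omega_{\alpha}$ in a Lebesgue space better than $L^{2^{\ast}}$.
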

\begin{proof}
From Theorem \ref{p21}, we have
  \begin{equation}\label{a21}
   u_\alpha=u+\sum_{j=1}^k\rho_{x_{\alpha,j},R_{\alpha,j}}(U_j)+\omega_\alpha,
  \end{equation}
where $\rho_{x_{\alpha,j},R_{\alpha,j}}(U_j)=(R_{\alpha}^j)^{\frac{N-2}{2}}U_{j}\left(R_{\alpha}^j(x-x_{\alpha}^j)\right)$,
 $u_0$ is a $C^{1,\alpha}$ function, and 	$\|\omega_\alpha\|\rightarrow 0$ as $\alpha\rightarrow1$.
Let $\tilde{w}_{\alpha}$ be a solution of
\begin{equation} \label{a22}
\left\{
\begin{aligned} 
 &-\mbox{div}((1+|\nabla \tilde{w}_{\alpha}|^2)^{\alpha-1}\nabla \tilde{w}_{\alpha})
 =C\left(|u|^{2^{\ast}-2\alpha}
 +\sum_{j=1}^k|\rho_{x_{\alpha,j},R_{\alpha,j}}(U_j)|^{2^{\ast}-2\alpha}+|\omega_\alpha|^{2^{\ast}-2\alpha}\right)w_{\alpha}^{2\alpha-1}+A\  \mbox{in}\ \Omega_1,\\
&\tilde{w}_{\alpha}=0,\ \  \mbox{on}\ \partial\Omega_1,
\end{aligned}
\right.
\end{equation}
where $\Omega_1$ is a bounded domain in $\mathbb{R}^N$ satisfying $\Omega \subset \Omega_1, C>0$ is a fixed large constant. Then,
by the comparison principle,
$$w_{\alpha}(x)\leq \tilde{w}_{\alpha}(x)\ \ x \in \Omega.$$
Moreover, it is easy to check that
\begin{equation}\label{a24}
\int\limits_{\Omega_1}|\tilde{w}_{\alpha}(x)|^{2^{\ast}}dx\leq C.
\end{equation}

Let $w = Gv$ be the solution of
\begin{equation} \label{a23}
\left\{
\begin{aligned} 
 &-\mbox{div}((1+|\nabla w|^2)^{\alpha-1}\nabla w)=v\ \ \ \mbox{in}\ \Omega_1,\\
&w=0,\ \  \mbox{on}\ \partial\Omega_1.
\end{aligned}
\right.
\end{equation}
Then, it follows from Lemma \ref{l31} and (\ref{a24}) that
$$\tilde{w}_\alpha\leq C+G\left(|u|^{2^{\ast}-2\alpha}w_{\alpha}^{2\alpha-1}+A\right)
 +\sum_{j=1}^kG\left(|\rho_{x_{\alpha,j},R_{\alpha,j}}(U_j)|^{2^{\ast}-2\alpha}w_{\alpha}^{2\alpha-1}\right)
 +G\left(|\omega_\alpha|^{2^{\ast}-2\alpha}w_{\alpha}^{2\alpha-1}\right).$$

Firstly, we treat the term $G\left(|u|^{2^{\ast}-2\alpha}w_{\alpha}^{2\alpha-1}+A\right)$. Take any $q\in \left(\frac{2\alpha N}{2\alpha N-N+2\alpha},\frac{2^{\ast}}{2\alpha-1}\right)$. Then
$$p_1=\frac{Nq(2\alpha-1)}{N-2\alpha q}>(2\alpha)^{\ast}>2^{\ast}.$$
It follows from Proposition \ref{app1} that
\begin{align}\label{a25}
\|G\left(|u|^{2^{\ast}-2\alpha}w_{\alpha}^{2\alpha-1}+A\right)\|_{p_1}&\leq C \||u|^{2^{\ast}-2\alpha}w_{\alpha}^{2\alpha-1}+A\|_{q}^{\frac{1}{2\alpha-1}}\nonumber\\
&\leq C+C\left(\int\limits_{\Omega}\left||u|^{2^{\ast}-2\alpha}w_{\alpha}^{2\alpha-1}\right|^qdx\right)^{\frac{1}{q(2\alpha-1)}}\nonumber\\
&\leq C+C\left(\int\limits_{\Omega}\left|w_{\alpha}\right|^{(2\alpha-1) q}dx\right)^{\frac{1}{q(2\alpha-1)}}\leq C.
\end{align}

Next, we treat the term $G\left(|\rho_{x_{\alpha,j},R_{\alpha,j}}(U_j)|^{2^{\ast}-2\alpha}w_{\alpha}^{2\alpha-1}\right)$.
Let $p_2\in \left(\frac{(2\alpha-1)N}{N-2\alpha}, 2^{\ast}\right)$ be a constant.
By Corollary \ref{app13}, we obtain
\begin{align}\label{a26}
\|G\left(|\rho_{x_{\alpha,j},R_{\alpha,j}}(U_j)|^{2^{\ast}-2\alpha}w_{\alpha}^{2\alpha-1}\right)\|_{p_2}
&\leq C \left\||\rho_{x_{\alpha,j},R_{\alpha,j}}(U_j)|^{2^{\ast}-2\alpha}\right\|_r^{\frac{1}{2\alpha-1}} \|w_{\alpha}\|_{2^{\ast}} \nonumber\\
&\leq C \left\||\rho_{x_{\alpha,j},R_{\alpha,j}}(U_j)|^{2^{\ast}-2\alpha}\right\|_r^{\frac{1}{2\alpha-1}},
\end{align}
where $r$ is determined by $$\frac{1}{r}=\frac{2\alpha-1}{p_2}+\frac{2\alpha}{N}-\frac{2\alpha-1}{2^{\ast}}.$$
But $$ \int\limits_{\Omega} |\rho_{x_{\alpha,j},R_{\alpha,j}}(U_j)|^{(2^{\ast}-2\alpha)r}dx
=(R_{\alpha,j})^{-N+(N-\alpha N+2\alpha)r}\int\limits_{\Omega_{x_{\alpha,j},R_{\alpha,j}}}|U_j|^{\frac{2N-2\alpha N+4\alpha}{N-2}r}dx, $$
where $\Omega_{x,\lambda}=\{y: x+\lambda^{-1}y\in \Omega\}$.

For $j = 1,\cdots, k$, using  the Kelvin transformation $v(x)=|x|^{2-N}U\left(\frac{x}{|x|^2}\right)$, we have
\begin{equation}\label{a27}
|U_j(x)|\leq \frac{C}{|x|^{N-2}}.
\end{equation}
So for any $r>\frac{N}{4\alpha-(2\alpha-2)N}$, we obtain
$$\int\limits_{\Omega_{x_{\alpha,j},R_{\alpha,j}}}|U_j|^{\frac{2N-2\alpha N+4\alpha}{N-2}r}dx\leq C,\ j=1,\cdots,k.$$
Note that $\frac{N}{2\alpha}>\frac{N}{4\alpha-(2\alpha-2)N}$  and $r\rightarrow \frac{N}{2\alpha}$ as $p_2\rightarrow 2^{\ast}$. So we
such that the corresponding $r>\frac{N}{4\alpha-(2\alpha-2)N}$. Thus, we have proved that there is a $p_2 <2^{\ast}$, such that
\begin{equation}\label{a29}
\|G\left(|\rho_{x_{\alpha,j},R_{\alpha,j}}(U_j)|^{2^{\ast}-2\alpha}w_{\alpha}^{2\alpha-1}\right)\|_{p_2}
\leq C(R_{\alpha,j})^{\left(\frac{-N}{r}+(N-\alpha N+2\alpha)\right)\frac{1}{2\alpha-1}}
=C(R_{\alpha,j})^{\frac{N}{2^{\ast}}-\frac{N}{p_2}+\frac{N-\alpha N}{2\alpha-1}}.
\end{equation}

Finally, we treat the term $G\left(|\omega_\alpha|^{2^{\ast}-2\alpha}w_{\alpha}^{2\alpha-1}\right)$. It follows from Lemma \ref{l32} that
\begin{align}\label{a28}
 \|G\left(|\omega_\alpha|^{2^{\ast}-2\alpha}w_{\alpha}^{2\alpha-1}\right)\|_{p_1,p_2.R_{\alpha}} &\leq C\||\omega_{\alpha}|^{2^{\ast}-2\alpha}\|^{\frac{1}{2\alpha-1}}_{\frac{N}{2\alpha}}\|w_{\alpha}\|_{p_1,p_2,R_{\alpha}}\nonumber\\
 &\leq \frac{1}{2}\|w_n\|_{p_1,p_2, R_{\alpha}}+C.
\end{align}
Since $\||\omega_{\alpha}|^{2^{\ast}-2\alpha}\|_{\frac{N}{2\alpha}}\rightarrow 0$  as $\alpha\rightarrow 1$.

Combining (\ref{a26}), (\ref{a29}) and (\ref{a28}), we obtain
\begin{align}\label{a30}
   &\|w_{\alpha}\|_{p_1,p_2,R_{\alpha}}  \nonumber\\
   & \leq C+\|G\left(|u|^{2^{\ast}-2\alpha}w_{\alpha}^{2\alpha-1}+A\right)\|_{p_1,p_2,R_{\alpha}}
 +\sum_{j=1}^k\|G\left(|\rho_{x_{\alpha,j},R_{\alpha,j}}(U_j)|^{2^{\ast}-2\alpha}w_{\alpha}^{2\alpha-1}\right)\|_{p_1,p_2,R_{\alpha}}\nonumber\\
&\ \ \  +\|G\left(|\omega_\alpha|^{2^{\ast}-2\alpha}w_{\alpha}^{2\alpha-1}\right)\|_{p_1,p_2,R_{\alpha}}.\nonumber\\
   & \leq C+\|G\left(|u|^{2^{\ast}-2\alpha}w_{\alpha}^{2\alpha-1}+A\right)\|_{p_1}
 +C\sum_{j=1}^k\|G\left(|\rho_{x_{\alpha,j},R_{\alpha,j}}(U_j)|^{2^{\ast}-2\alpha}w_{\alpha}^{2\alpha-1}\right)\|
 _{p_2}(R_{\alpha,j})^{\frac{N}{2^{\ast}}-\frac{N}{p_2}+\frac{N-\alpha N}{2\alpha-1}}\nonumber\\
&\ \ \  +C\|G\left(|\omega_\alpha|^{2^{\ast}-2\alpha}w_{\alpha}^{2\alpha-1}\right)\|_{p_1,p_2,R_{\alpha}}.\nonumber\\
 &\leq \frac{1}{2}\|w_{\alpha}\|_{p_1,p_2, R_{\alpha}}+C.
\end{align}
So the result follows.

\end{proof}

\begin{proof}[\textbf{Proof of Proposition \ref{p31}}]
The constants $q_1$ and $q_2$ defined in Lemma \ref{l34} satisfy  $q_1 > 2^{\ast}$
 and
$q_1\rightarrow +\infty $ as $p_1\rightarrow  \frac{N}{2\alpha}(2^{\ast}-1)$, while $q_2 <2^{\ast}$
and $q_2\rightarrow(1 - \frac{1}{2\alpha})2^{\ast}$ as $p_2\rightarrow 2^{\ast}-1$.

Using (\ref{a20}), we just need to show the result for $w_n$.

Since $w_n$ satisfies (\ref{a19}), we can use Lemmas \ref{l34} and \ref{l35} to prove that
	$$\|w_{\alpha}\|_{p_1,p_2,R_{\alpha}} \leq  C$$
holds for any $p_1,p_2$ with $p_1 >2^{\ast}>p_2 >\left(1-\frac{1}{2\alpha}\right)2^{\ast}$,
\end{proof}

\subsection{Estimates on safe regions}
Noting that the number of the bubbles of un is finite and using Theorem \ref{p21}, we can always
find a constant  $\bar{C} >0$, independent of $n$, such that the region

$$\mathcal{A}_{\alpha}^1=\left(B_{(\bar{C}+5)R_{\alpha}^{-\frac{1}{2}}}(x_{\alpha})\setminus B_{\bar{C}R_{\alpha}^{-\frac{1}{2}}}(x_{\alpha}) \right)\cap \Omega$$
does not contain any concentration point of $u_{\alpha}$ for any $\alpha$. We call this region a safe region for $u_{\alpha}$.

Let
$$\mathcal{A}_{\alpha}^2=\left(B_{(\bar{C}+4)R_{\alpha}^{-\frac{1}{2}}}(x_{\alpha})\setminus B_{(\bar{C}+1)R_{\alpha}^{-\frac{1}{2}}}(x_{\alpha}) \right)\cap \Omega.$$

In this section, we will prove the following result.

\begin{proposition}\label{p36}
Let $u_{\alpha}$ be a weak solution of (\ref{P}) with $\alpha\rightarrow 1$.
Then there exists  a constant $C>0$, independent of $\alpha$, such that
$$\int\limits_{\mathcal{A}_{\alpha}^2}|u_{\alpha}(x)|dx\leq CR_{\alpha}^{\frac{-N}{2}}, \ \mbox{for\ all} \ x\in \mathcal{A}_{\alpha}^2.$$
\end{proposition}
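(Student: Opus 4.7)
The key geometric observation is that $|\mathcal{A}_\alpha^2|\simeq R_\alpha^{-N/2}$: the inner and outer radii of the annulus, as well as their gap, are all of order $R_\alpha^{-1/2}$. Thus the desired estimate is morally equivalent to a uniform $L^\infty$ bound on $u_\alpha$ on the safe region, consistent with the heuristic that the main bubble $R_\alpha^{(N-2)/2}U(R_\alpha(x-x_\alpha))$ is of order one at distance $R_\alpha^{-1/2}$ from the concentration point (the safe region is, by construction, free of any concentration of $u_\alpha$).

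The plan is to combine the integral estimates of Proposition \ref{p31} with H\"older's inequality on the annulus. By Proposition \ref{p31}, for any admissible pair $(p_1,p_2)$ with $(1-\tfrac{1}{2\alpha})2^*<p_2<2^*<p_1$ there is a decomposition $|u_\alpha|\le u_{\alpha,1}+u_{\alpha,2}$ satisfying
\[
\|u_{\alpha,1}\|_{L^{p_1}(\Omega)}\le C,\qquad \|u_{\alpha,2}\|_{L^{p_2}(\Omega)}\le C R_\alpha^{N/2^*-N/p_2}.
\]
Integrating on $\mathcal{A}_\alpha^2$ and applying H\"older, one obtains
\[
\int_{\mathcal{A}_\alpha^2}|u_\alpha|\,dx \;\le\; \|u_{\alpha,1}\|_{p_1}|\mathcal{A}_\alpha^2|^{1-1/p_1} + \|u_{\alpha,2}\|_{p_2}|\mathcal{A}_\alpha^2|^{1-1/p_2} \;\le\; C R_\alpha^{-N(1-1/p_1)/2} + C R_\alpha^{-1-N/(2p_2)},
\]
where the exponents are obtained by substituting $|\mathcal{A}_\alpha^2|\simeq R_\alpha^{-N/2}$ and using the explicit scaling of $\|u_{\alpha,2}\|_{L^{p_2}}$.

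It remains to optimize the choice of $(p_1,p_2)$ within the admissible range: letting $p_1\to\infty$ drives the first exponent $-N(1-1/p_1)/2$ to $-N/2$, while letting $p_2\downarrow 2^*/2=N/(N-2)$, which is compatible with the admissibility condition in the limit $\alpha\to 1$ since $(1-\tfrac{1}{2\alpha})2^*\to 2^*/2$, drives the second exponent $-1-N/(2p_2)$ to $-N/2$. With these choices both terms are $O(R_\alpha^{-N/2})$.

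The principal obstacle is making these two limits simultaneously consistent with admissibility while keeping the constants from Proposition \ref{p31} uniform in $(p_1,p_2)$; this is a delicate bookkeeping issue analogous to that in Devillanova--Solimini \cite{r3}, and is resolved by choosing the parameters as functions of $\alpha$ so that the exponents approach $-N/2$ at a rate that compensates for the dependence $C=C(p_1,p_2)$ in Proposition \ref{p31}.
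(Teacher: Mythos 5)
Your plan does not close. The obstacle you flag at the end is not a ``bookkeeping issue'' but the central difficulty, and the H\"older-only argument genuinely fails without a new idea. Here is why: with $|\mathcal{A}_\alpha^2|\simeq R_\alpha^{-N/2}$, the first term gives the exponent $-N(1-1/p_1)/2$, which is strictly larger than $-N/2$ for every finite $p_1$, so $R_\alpha^{-N(1-1/p_1)/2}/R_\alpha^{-N/2}=R_\alpha^{N/(2p_1)}\to\infty$; similarly the second term gives $-1-N/(2p_2)$, and since admissibility forces $p_2>(1-\tfrac{1}{2\alpha})2^*>2^*/2$, this exponent is again strictly larger than $-N/2$. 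Both terms blow up relative to the target for every admissible pair. To recover $-N/2$ you would need to send $p_1\to\infty$ and $p_2\downarrow 2^*/2$ at a rate depending on $\alpha$, but the constant $C=C(p_1,p_2)$ in Proposition \ref{p31} is produced by a chain of applications of Lemmas \ref{l32}, \ref{l34} and \ref{l35} and is not shown (and is unlikely) to stay bounded as the exponents approach the endpoints of the admissible range. Moreover the rate at which $R_\alpha\to\infty$ is not quantified, so one cannot arrange the compensation you describe without substantial additional input.

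The paper's proof uses a different mechanism precisely to avoid this. It first proves Lemma \ref{l37}, a uniform bound on spherical averages $\frac{1}{r^N}\int_{\partial B_r(y)\cap\Omega}|u_\alpha|$ for $r\geq \bar{C}R_\alpha^{-1/2}$, by combining the representation formula \eqref{a32} with Proposition \ref{p31} applied at a single, fixed pair $(p_1,p_2)$ (so no optimization over exponents is needed). It then rescales $v_\alpha(x)=u_\alpha(R_\alpha^{-1/2}x)$ and crucially invokes the fact that the safe region contains no concentration point: this gives the smallness \eqref{a37}, $R_\alpha^{-1}\int_{B_1(z)}(2|v_\alpha|^{2^*-1}+A|v_\alpha|)^{N/2}\to 0$, which is exactly what makes Moser iteration on $B_1(z)$ deliver a uniform $L^\infty$ bound \eqref{a38}. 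Rescaling back then yields $\int_{B_{\frac12 R_\alpha^{-1/2}}(y)}|u_\alpha|\leq CR_\alpha^{-N/2}$. Your heuristic that the statement is ``morally an $L^\infty$ bound'' is right, and this is what the paper proves; but the mechanism that converts the integral estimates of Proposition \ref{p31} into an $L^\infty$ bound is Moser iteration driven by the absence of concentration in the safe region, not H\"older's inequality with endpoint exponents. That ingredient is missing from your argument, and without it the proof does not go through.
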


To prove Proposition \ref{p36}, we need the following lemma.
\begin{lemma}\label{l37}
There is a constant $C>0$, independent of $\alpha$, such that
$$\frac{1}{r^N}\int\limits_{\partial B_r(y)\cap \Omega} |u_{\alpha}| dx \leq C,\ \ \mbox{for\ all}\ y\in\Omega,$$
for all $r\geq \bar{C}R_{\alpha}^{-\frac{1}{2}}$.
\end{lemma}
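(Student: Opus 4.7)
The plan is to use Proposition~\ref{p31} as the main input. By the definition of the norm $\|\cdot\|_{p_1,p_2,R_\alpha}$, for any admissible pair with $p_1>2^*>p_2>(1-\frac{1}{2\alpha})2^*$, we can split $|u_\alpha|\le u_{\alpha,1}+u_{\alpha,2}$ with $\|u_{\alpha,1}\|_{p_1}\le C$ and $\|u_{\alpha,2}\|_{p_2}\le CR_\alpha^{N/2^*-N/p_2}$. First I would apply H\"older's inequality on the doubled ball $B_{2r}(y)\cap\Omega$ and sum the two pieces to obtain
\[\int_{B_{2r}(y)\cap\Omega}|u_\alpha|\,dx\;\le\; Cr^{N(1-1/p_1)}+Cr^{N(1-1/p_2)}R_\alpha^{N/2^*-N/p_2}.\]
Choosing $p_1$ sufficiently large and $p_2$ near the lower end of admissibility, then substituting $r\ge\bar CR_\alpha^{-1/2}$, both terms are shown to be controlled by $Cr^N$.

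To pass from this ball integral to the sphere integral in the statement, I would use the co-area identity
\[\int_r^{2r}\int_{\partial B_s(y)\cap\Omega}|u_\alpha|\,dS\,ds\;=\;\int_{(B_{2r}\setminus B_r)(y)\cap\Omega}|u_\alpha|\,dx\;\le\;Cr^N,\]
which produces $\int_{\partial B_s(y)\cap\Omega}|u_\alpha|\,dS\le Cr^{N-1}$ for at least one radius $s\in[r,2r]$. To promote this mean bound to every $r\ge\bar CR_\alpha^{-1/2}$, I would combine with a local mean--value / Harnack estimate for the nondegenerate operator in~(\ref{P}): with $\bar C$ chosen large, $\partial B_r(y)$ stays outside every bubble core $B_{R_{\alpha,j}^{-1}}(x_{\alpha,j})$, and on the complement of these cores $u_\alpha$ admits standard local $L^\infty$--$L^1$ estimates that transfer the bound from the selected $s$ to every $r$.

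The main obstacle is the delicate exponent arithmetic. The admissible strip $\big((1-\frac{1}{2\alpha})2^*,\,2^*\big)$ for $p_2$ in Proposition~\ref{p31} lies just above $2^*/2=N/(N-2)$ and shrinks toward $(N/(N-2),\,2N/(N-2))$ as $\alpha\to 1$, while absorbing the factor $R_\alpha^{N/2^*-N/p_2}$ into $r^N$ at $r=\bar CR_\alpha^{-1/2}$ requires $p_2\le 2^*/2$ --- exactly at the forbidden boundary. I would deal with this borderline by picking $p_2$ just above $2^*/2$ inside the admissible window for fixed $\alpha>1$, accepting a tiny polynomial loss $R_\alpha^{\varepsilon(\alpha)}$ with $\varepsilon(\alpha)\to 0$, and absorbing it using the slack in $\bar C>1$. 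An alternative, which sidesteps the boundary issue, is to combine Proposition~\ref{p31} with the explicit bubble decomposition of Theorem~\ref{p21} (via the comparison $|u_\alpha|\le w_\alpha$ from Lemma~\ref{l35}) and exploit the far--field decay $|U_j(x)|\le C|x|^{-(N-2)}$ of the Aubin--Talenti profiles to directly control $u_\alpha$ at distance $\ge\bar CR_\alpha^{-1/2}$ from the concentration points.
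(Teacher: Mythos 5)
Your plan diverges from the paper's proof, and the divergence is not merely stylistic: the route you describe does not close, for reasons your own last paragraph already half-concedes.

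The paper does not apply Proposition~\ref{p31} to $\int_{B_{2r}}|u_\alpha|\,dx$ at all. Instead it first compares $|u_\alpha|$ against a nonnegative function $\tilde w_\alpha$ solving the \emph{linear} problem $-\Delta\tilde w_\alpha = 2|u_\alpha|^{2^*-1}+A$ on $\Omega_1\supset\Omega$ (this comparison between the Sacks--Uhlenbeck operator and the Laplacian is itself a small lemma, see~\eqref{b32}). It then invokes the exact representation for the spherical mean of $\tilde w_\alpha$,
\[
\frac{1}{r^{N-1}}\int\limits_{\partial B_r(y)}\tilde w_\alpha\,dS=\frac{1}{r_0^{N-1}}\int\limits_{\partial B_{r_0}(y)}\tilde w_\alpha\,dS+\int\limits_r^{r_0}\frac{1}{t^{N-1}}\int\limits_{B_t(y)}\bigl(2|u_\alpha|^{2^*-1}+A\bigr)\,dx\,dt,
\]
and applies Proposition~\ref{p31} to the nonlinearity $|u_\alpha|^{2^*-1}$ \emph{inside this truncated Newtonian potential}, with $p_2=2^*-1$ (comfortably inside the admissible window) and $p_1>N(2^*-1)/2$. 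The weight $\frac{dt}{t^{N-1}}$ in the potential integral is what makes the exponents balance: for the $p_1$-piece the integral $\int_r^{r_0}t^{1-N(2^*-1)/p_1}\,dt$ is convergent and bounded independently of $r$, and for the $p_2$-piece the computation gives exactly $r^{-(N-2)}R_\alpha^{-(N-2)/2}\le C$ once $r\ge\bar C R_\alpha^{-1/2}$.

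By contrast, your single-ball H\"older estimate fails exponent accounting on \emph{both} pieces. For the $p_2$-piece you correctly observe that absorbing $R_\alpha^{N/2^*-N/p_2}$ into $r^N$ at $r\sim R_\alpha^{-1/2}$ forces $p_2\le 2^*/2$, outside the admissible strip; choosing $p_2$ just above $(1-\tfrac{1}{2\alpha})2^*$ leaves a residual factor $R_\alpha^{\varepsilon(\alpha)}$ with $\varepsilon(\alpha)\approx\frac{N}{2^*}\cdot\frac{\alpha-1}{2\alpha-1}>0$. You cannot absorb this using the ``slack in $\bar C$'': nothing in Theorem~\ref{p21} or Proposition~\ref{p31} ties $R_\alpha$ to $\alpha-1$, so $R_\alpha^{\varepsilon(\alpha)}$ is a priori unbounded as $\alpha\to1$. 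The $p_1$-piece has the same defect: $r^{N(1-1/p_1)}/r^N=r^{-N/p_1}\le C R_\alpha^{N/(2p_1)}$, again an uncontrolled positive power of $R_\alpha$ for any finite $p_1$. The co-area step and the Moser/Harnack promotion you sketch would then be built on a ball estimate that does not hold. Your alternative sketch --- passing through the comparison $|u_\alpha|\le w_\alpha$ of~\eqref{a20} and the far-field decay $|U_j(x)|\le C|x|^{-(N-2)}$ from~\eqref{a27} --- is closer in spirit to what the paper actually does (the bubble decomposition enters via Lemma~\ref{l35}, which feeds Proposition~\ref{p31}), but on its own it does not replace the Newtonian-potential representation; that representation is the crucial device that converts a critical $L^{2^*}$-scale estimate on $|u_\alpha|^{2^*-1}$ into a pointwise bound on spherical averages of $|u_\alpha|$.
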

\begin{proof}
Let $\tilde{w}_{\alpha}$ be the solution of

\begin{equation} \label{a31}
\left\{
\begin{aligned} 
 &-\Delta \tilde{w}_{\alpha}=2|u_{\alpha}|^{2^{\ast}-1}+A\ \ \ \mbox{in}\ \Omega_1,\\
&\tilde{w}_{\alpha}=0,\ \  \mbox{on}\ \partial\Omega_1,
\end{aligned}
\right.
\end{equation}
and we konw that $\tilde{w}_{\alpha}>0$.
Then we have
\begin{align}\label{b32}
  0&<\mbox{div}((1+|\nabla u_{\alpha}|^2)^{\alpha-1}\nabla u_{\alpha})-\Delta \tilde{w}_{\alpha}\nonumber\\
  & =\mbox{div}((1+|\nabla u_{\alpha}|^2)^{\alpha-1}\nabla u_{\alpha})-\mbox{div}((1+|\nabla \tilde{w}_{\alpha}|^2)^{\alpha-1}\nabla \tilde{w}_{\alpha})\nonumber\\
  &\ \ + \mbox{div}((1+|\nabla \tilde{w}_{\alpha}|^2)^{\alpha-1}\nabla \tilde{w}_{\alpha})-\Delta \tilde{w}_{\alpha}.\nonumber\\
  & =\mbox{div}((1+|\nabla u_{\alpha}|^2)^{\alpha-1}\nabla u_{\alpha})-\mbox{div}((1+|\nabla \tilde{w}_{\alpha}|^2)^{\alpha-1}\nabla \tilde{w}_{\alpha})\nonumber\\
  &\ \ + \mbox{div}((1+|\nabla \tilde{w}_{\alpha}|^2)^{\alpha-1}-1)\nabla \tilde{w}_{\alpha}).
\end{align}

If $\mbox{div}((1+|\nabla \tilde{w}_{\alpha}|^2)^{\alpha-1}-1)-\nabla \tilde{w}_{\alpha})\leq 0$, using \eqref{b32}   and
the comparison principle of uniform elliptic operator,
 we have $|u_{\alpha}|\leq  \tilde{w}_{\alpha}$ in $\Omega$.

If  $\mbox{div}((1+|\nabla \tilde{w}_{\alpha}|^2)^{\alpha-1}-1)\nabla \tilde{w}_{\alpha})>0$,  using the
by the comparison principle in \cite{fn}, we have $\tilde{w}_{\alpha}<0$, which is a contradiction.

Hence, we have  $|u_{\alpha}|\leq  \tilde{w}_{\alpha}$ in $\Omega$. 

Now we have the following formula
\begin{align}\label{a32}
\frac{1}{r^{N-1}}\int\limits_{\partial B_r(y)}\tilde{w}_{\alpha}dS&=\frac{1}{r_0^{N-1}}\int\limits_{\partial B_{r_0}(y)}\tilde{w}_{\alpha}dS+\int\limits_r^{r_0} \frac{1}{t^{N-1}}\int\limits_{B_t(y)}(2|u_{\alpha}|^{2^{\ast}-1}+A)dxdt.
\end{align}

By continuity, since $\{u_{\alpha}\}$ is  bounded in $L^{2^{\ast}}\subset L^1$, we can suppose
$$\int_{B_1(y)} u_{\alpha}dx\leq C$$ with a constat $C$ independent of $\alpha$. So, there is
$r_0\in [\frac{1}{2}, 1]$, such that
  $$\frac{1}{r_0^{N-1}}\int_{\partial B_{r_0}(y)} u_{\alpha}dx\leq C.$$

We now estimate
$$\int\limits_r^{r_0} \frac{1}{t^{N-1}}\int\limits_{B_t(y)}|u_{\alpha}|^{2^{\ast}-1}dxdt,\
\ \mbox{for\ all\ } r\geq \bar{C}R_{\alpha}^{-\frac{1}{2}}.$$
By Proposition \ref{p31}, we know that 	$\|u_{\alpha}\|_{p_1,p_2,R_{\alpha}} \leq C$  for any $p_1, p_2$ satisfing
$p_1 >2^{\ast}>p_2 >\left(1-\frac{1}{2\alpha}\right)2^{\ast}$.

Let $p_2 = 2^{\ast} - 1$, and let $p_1 >2^{\ast}$. Then we can choose $v_{1,\alpha}$ and $v_{2,\alpha}$, such that
 $|u_{\alpha}| \leq v_{1,\alpha} +v_{2,\alpha}$, and

$$\|v_{1,\alpha}\|_{p_1}\leq C,\ \ \ \|v_{2,\alpha}\|_{p_2}\leq C R^{\frac{N}{2^{\ast}}-\frac{N}{p_2}}.$$

If we choose $p_1 >2^{\ast}$ large enough, then

\begin{align}\label{a33}
\int\limits_r^{r_0}\frac{1}{t^{N-1}}\int\limits_{B_t(y)}v_{1,\alpha}^{2^{\ast}-1}dxdt
&\leq \int\limits_r^{r_0}\frac{1}{t^{N-1}}\left(\int\limits_{B_t(y)}v_{1,\alpha}^{p_1}dx\right)^{\frac{2^{\ast}-1}{p_1}}
t^{N\left(1-\frac{2^{\ast}-1}{p_1}\right)}dt,\nonumber\\
&\leq C\int\limits_r^{r_0} \frac{1}{t^{N-1}}t^{N\left(1-\frac{2^{\ast}-1}{p_1}\right)}dt\leq C,
\end{align}

and

\begin{align}\label{a34}
\int\limits_r^{r_0}\frac{1}{t^{N-1}}\int\limits_{B_t(y)}v_{2,\alpha}^{2^{\ast}-1}dxdt
&\leq C\int\limits_r^{r_0} \frac{1}{t^{N-1}}R_{\alpha}^{(2^{\ast}-1)\left(\frac{N}{2^{\ast}}-\frac{N}{2^{\ast}-1}\right)}dt,\nonumber\\
&\leq C\frac{1}{r^{N-2}}R_{\alpha}^{-\frac{N-2}{2}}\leq C,
\end{align}
since $r\geq \bar{C}R_{\alpha}^{-\frac{1}{2}}.$

Combining (\ref{a33}) and (\ref{a34}), we obtain
\begin{align}\label{a35}
\int\limits_r^{r_0} \left(\frac{1}{t^{N-2}}\int\limits_{B_t(y)}|u_{\alpha}|^{2^{\ast}-1}dx\right)
\frac{dt}{t}&\leq  C\int\limits_r^{r_0}\frac{1}{t^{N-1}}\int\limits_{B_t(y)}v_{1,\alpha}^{2^{\ast}-1}dxdt \nonumber\\
& \ \ +C\int\limits_r^{r_0}\frac{1}{t^{N-1}}\int\limits_{B_t(y)}v_{2,\alpha}^{2^{\ast}-1}dxdt\leq C.
\end{align}

\end{proof}

The conclusion for $r \geq  r_0$ is obvious and thus we complete our proof.

Now we are ready to prove Proposition \ref{p36}.

\begin{proof}[\textbf{Proof of Proposition \ref{p36}}]
It follows from Lemma  \ref{l37}
that for any $y \in \mathcal{A}_{\alpha}^2$, we have
$$\frac{1}{r^N}\int\limits_{\partial B_r(y)\cap \Omega} |u_{\alpha}| dx \leq C,\ \ \mbox{for\ all}\ y\in\Omega,$$
for all $r\geq \bar{C}R_{\alpha}^{-\frac{1}{2}}$.
Let $$v_{\alpha}(x)=u_{\alpha}\left(R_{\alpha}^{-\frac{1}{2}}x\right),\ \ x\in \Omega_{\alpha},$$
where $\Omega_{\alpha}=\{x: R_{\alpha}^{-\frac{1}{2}}x \in \Omega\}$.

Then from \eqref{app17}, $v_{\alpha}$ satisfies
\begin{align}\label{a36}
\int\limits_{\Omega_{\alpha}} |\nabla v_{\alpha}|^2 ]dx
&\leq \int\limits_{\Omega_{\alpha}} [(1+|\nabla v_{\alpha}|^2)^{\alpha-1}|\nabla v_{\alpha}|^2 ]dx\nonumber\\
&\leq R_{\alpha}^{\frac{N}{2}}\int\limits_{\Omega} [(1+|R_{\alpha}^{-\frac{1}{2}}\nabla u_{\alpha}|^2)^{\alpha-1}|R_{\alpha}
^{-\frac{1}{2}}\nabla u_{\alpha}|^2 ]dx\nonumber\\
&\leq R_{\alpha}^{\frac{N}{2}}R_{\alpha}^{-\frac{1}{2}} \int\limits_{\Omega} [(1+|\nabla u_{\alpha}|^2)^{\alpha-1}|\nabla u_{\alpha}|^2 ]dx\nonumber\\
&\leq  CR_{\alpha}^{\frac{N}{2}} R_{\alpha}
^{-1}\int\limits_{\Omega}(2|u_{\alpha}|^{2^{\ast}}+Au_{\alpha}^2)dx\nonumber\\
& \leq C R_{\alpha}
^{-1}\int\limits_{\Omega_{\alpha}}(2|v_{\alpha}|^{2^{\ast}-1}+A|v_{\alpha}|)|v_{\alpha}|dx.
\end{align}

Let $z=R_{\alpha}^{\frac{1}{2}}y$. Since $B_{R_{\alpha}^{-\frac{1}{2}}}(y)$, $y\in \mathcal{A}_{\alpha}^2$
does not contain any concentration point of $u_\alpha$, we can
deduce that
\begin{align}\label{a37}
   R_{\alpha}^{-1}\int\limits_{B_1(z)}(2|v_{\alpha}|^{2^{\ast}-1}+A|v_{\alpha}|)^{\frac{N}{2}}dx\leq \int\limits_{B_{R^{-\frac{1}{2}}_{\alpha}}}
   |u_{\alpha}(x)|^{2^{\ast}}dx+CR_{\alpha}^{-\frac{N}{2}}\rightarrow 0, \ \ \alpha\rightarrow 1.
\end{align}

Since $v_{\alpha}$ satisfies \eqref{a36},  by Moser iteration,  we obtain
\begin{align}\label{a38}
\|v_{\alpha}\|_{L^{\infty}(B_{\frac{1}{2}}(z))}&\leq C\int\limits_{B_1(z)} |v_{\alpha}|dx +C\nonumber\\
&=CR_{\alpha}^{\frac{N}{2}}\int\limits_{B_{R^{-\frac{1}{2}}_{\alpha}}(y)} |u_{\alpha}|dx+C\leq C.
\end{align}
As a result,
\begin{equation}\label{a39}
  \int\limits_{B_{\frac{1}{2}R^{-\frac{1}{2}}_{\alpha}}(y)} |u_{\alpha}|dx \leq CR_{\alpha}^{-\frac{N}{2}}, \mbox{\ for\ all\ } y \in \mathcal{A}_{\alpha}^2.
\end{equation}
\end{proof}

Let
$$\mathcal{A}_{\alpha}^3=\left(B_{(\bar{C}+3)R_{\alpha}^{-\frac{1}{2}}}(x_{\alpha})
\setminus B_{(\bar{C}+2)R_{\alpha}^{-\frac{1}{2}}}(x_{\alpha}) \right)\cap \Omega.$$

\begin{proposition}\label{p37}
\begin{equation}\label{a42}
\int\limits_{\mathcal{A}_{\alpha}^3} |\nabla u_{\alpha}|^{2\alpha}dx
 \leq C\int\limits_{\mathcal{A}_{\alpha}^2}  (|u_{\alpha}|^{2^{\ast}}+1)dx
+CR_{\alpha}\int\limits_{\mathcal{A}_{\alpha}^2}  |u_{\alpha}|^{2}dx.
\end{equation}
In particular,
\begin{equation}\label{a43}
\int\limits_{\mathcal{A}_{\alpha}^3} |\nabla u_{\alpha}|^{2\alpha}dx \leq CR_{\alpha}^{\frac{2-N}{2}}.
\end{equation}
\end{proposition}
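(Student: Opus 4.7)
The plan is to run a Caccioppoli-type argument on the safe annular region, relying on pointwise bounds for $u_\alpha$ and $\nabla u_\alpha$ available there. First I would select a cutoff $\eta\in C_0^\infty(\mathbb{R}^N)$ with $\eta\equiv 1$ on $\mathcal{A}_{\alpha}^3$, $\mathrm{supp}\,\eta\subset \mathcal{A}_{\alpha}^2$, and $|\nabla\eta|\le CR_\alpha^{1/2}$; this is possible because the radial gap between the boundaries of $\mathcal{A}_{\alpha}^3$ and $\mathcal{A}_{\alpha}^2$ is of order $R_\alpha^{-1/2}$. Testing the Sacks-Uhlenbeck equation \eqref{P} against $u_\alpha\eta^2$ and applying Young's inequality to the cross term produced by $\nabla(u_\alpha\eta^2) = \eta^2\nabla u_\alpha + 2u_\alpha\eta\nabla\eta$, I would absorb half of the gradient energy to obtain
\begin{equation*}
\int (1+|\nabla u_\alpha|^2)^{\alpha-1}|\nabla u_\alpha|^2\eta^2\,dx \le C\int (1+|\nabla u_\alpha|^2)^{\alpha-1}u_\alpha^2|\nabla\eta|^2\,dx + C\int (|u_\alpha|^{2^{\ast}}+u_\alpha^2)\eta^2\,dx.
\end{equation*}
Since $(1+s)^{\alpha-1}\ge s^{\alpha-1}$ for $s\ge 0$, we have the pointwise inequality $|\nabla u_\alpha|^{2\alpha}\le (1+|\nabla u_\alpha|^2)^{\alpha-1}|\nabla u_\alpha|^2$, so the left-hand side dominates $\int_{\mathcal{A}_{\alpha}^3}|\nabla u_\alpha|^{2\alpha}\,dx$.

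The delicate step, and the main obstacle, is removing the weight $(1+|\nabla u_\alpha|^2)^{\alpha-1}$ from the first term on the right without losing a bad power of $R_\alpha$ (the naive Young splitting produces $\int u_\alpha^{2\alpha}|\nabla\eta|^{2\alpha}\,dx$, which carries a factor $R_\alpha^\alpha$ instead of the desired $R_\alpha$). To resolve this I would invoke the pointwise bound $|u_\alpha(x)|\le C$ on $\mathcal{A}_{\alpha}^2$ that follows from Proposition \ref{p36} together with the Moser iteration used in its proof; then the right-hand side $|u_\alpha|^{2^{\ast}-2}u_\alpha+\lambda u_\alpha$ of \eqref{P} is uniformly bounded on $\mathcal{A}_{\alpha}^2$, and standard quasilinear elliptic regularity of DiBenedetto--Tolksdorf type (the Sacks-Uhlenbeck operator is uniformly elliptic on sets where the gradient is controlled) yields $|\nabla u_\alpha|\le M$ uniformly on $\mathcal{A}_{\alpha}^2$. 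Consequently $(1+|\nabla u_\alpha|^2)^{\alpha-1}\le C_M$ there, so the first right-hand side term is at most $CR_\alpha\int_{\mathcal{A}_{\alpha}^2}u_\alpha^2\,dx$, and the remaining $\int u_\alpha^2\eta^2\,dx$ is absorbed into $C\int_{\mathcal{A}_{\alpha}^2}(|u_\alpha|^{2^{\ast}}+1)\,dx$ using $u_\alpha^2\le C\cdot 1$ pointwise, producing \eqref{a42}.

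For the \emph{in particular} claim \eqref{a43}, I would combine \eqref{a42} with Proposition \ref{p36}. The $L^1$ estimate $\int_{\mathcal{A}_{\alpha}^2}|u_\alpha|\,dx\le CR_\alpha^{-N/2}$ together with the pointwise bound $|u_\alpha|\le C$ yields
\begin{equation*}
\int_{\mathcal{A}_{\alpha}^2}u_\alpha^2\,dx\le CR_\alpha^{-N/2},\qquad \int_{\mathcal{A}_{\alpha}^2}|u_\alpha|^{2^{\ast}}\,dx\le CR_\alpha^{-N/2},\qquad |\mathcal{A}_{\alpha}^2|\le CR_\alpha^{-N/2}.
\end{equation*}
Inserting these into \eqref{a42} gives $\int_{\mathcal{A}_{\alpha}^3}|\nabla u_\alpha|^{2\alpha}\,dx\le CR_\alpha^{-N/2}+CR_\alpha\cdot R_\alpha^{-N/2}$, whose dominant term is exactly $CR_\alpha^{(2-N)/2}$.
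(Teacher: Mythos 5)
Your overall strategy (cutoff $\eta$ supported in $\mathcal{A}_{\alpha}^2$, equal to $1$ on $\mathcal{A}_{\alpha}^3$, $|\nabla\eta|\lesssim R_\alpha^{1/2}$; test with $\eta^2 u_\alpha$; Caccioppoli; then feed in Proposition~\ref{p36}) is the same as the paper's, and you are right to single out the weight $(1+|\nabla u_\alpha|^2)^{\alpha-1}$ surviving in the cross term $\int (1+|\nabla u_\alpha|^2)^{\alpha-1}u_\alpha^2|\nabla\eta|^2$ as the only nontrivial step — the paper's proof passes over it in silence. However, the fix you propose does not work. The Moser iteration in Proposition~\ref{p36} gives only an $L^\infty$ bound on $u_\alpha$ (more precisely on the rescaled $v_\alpha(x)=u_\alpha(R_\alpha^{-1/2}x)$), not on $\nabla u_\alpha$, and a uniform bound $|\nabla u_\alpha|\le M$ on $\mathcal{A}_\alpha^2$ with $M$ independent of $\alpha$ is false. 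Interior $C^{1}$ estimates for a quasilinear equation with bounded right-hand side scale like $\|\nabla u\|_{L^\infty(B_{\rho/2})}\le C(\rho^{-1}\|u\|_{L^\infty(B_\rho)}+\cdots)$; on the safe annulus $\rho\sim R_\alpha^{-1/2}$, so the best you can extract is $|\nabla u_\alpha|\lesssim R_\alpha^{1/2}$. Equivalently, set $\tilde v_\alpha(x)=R_\alpha^{1/2}v_\alpha(x)=R_\alpha^{1/2}u_\alpha(R_\alpha^{-1/2}x)$: this solves the Sacks--Uhlenbeck equation (with small right-hand side $R_\alpha^{-1/2}(|v_\alpha|^{2^*-2}v_\alpha+\lambda v_\alpha)$) on unit balls, and DiBenedetto--Tolksdorf--Lieberman estimates give $|\nabla\tilde v_\alpha|\lesssim\|\tilde v_\alpha\|_\infty\lesssim R_\alpha^{1/2}$, i.e.\ again $|\nabla u_\alpha|\lesssim R_\alpha^{1/2}$. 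Hence $(1+|\nabla u_\alpha|^2)^{\alpha-1}\lesssim R_\alpha^{\alpha-1}$, not $\lesssim 1$.

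With that corrected bound your Caccioppoli argument yields the cross term $\lesssim R_\alpha^{\alpha-1}\cdot R_\alpha\int_{\mathcal{A}_\alpha^2}u_\alpha^2\,dx = R_\alpha^{\alpha}\int_{\mathcal{A}_\alpha^2}u_\alpha^2\,dx$, so one obtains \eqref{a42} with $R_\alpha^\alpha$ in place of $R_\alpha$ and consequently \eqref{a43} with $R_\alpha^{\alpha-N/2}$ in place of $R_\alpha^{(2-N)/2}$. (It is worth noting that this weaker conclusion is still enough for the contradiction at the end of the proof of Theorem~\ref{rrr}, since $\alpha\to1$ and $N\ge 7$ gives $(N-6)/2-(\alpha-1)>0$ eventually; but it is not the estimate stated in the Proposition, and the uniform gradient bound you assert is the genuine gap in your write-up.)
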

\begin{proof}
Let $\phi_{\alpha}\in  C_0^2 (\mathcal{A}_{\alpha}^2)$ be a function with $\phi_{\alpha} = 1$ in $\mathcal{A}_{\alpha}^3$,
$0\leq \phi_{\alpha}\leq 1$ and $|\nabla \phi_{\alpha}|\leq CR_{\alpha}^{\frac{1}{2}}$.
From
\begin{equation}\label{a45}
\int\limits_{\Omega} (1+|\nabla u_{\alpha}|^{2})^{\alpha-1}\nabla u_{\alpha}\nabla (\phi_{\alpha}^2 u_{\alpha})dx
\leq C\int\limits_{\Omega}  (2|u_{\alpha}|^{2^{\ast}-2}+A)\phi_{\alpha}^2|u_{\alpha}^2|dx,
\end{equation}
we can prove \eqref{a45}.

From \eqref{a45} and Proposition \eqref{p36}, we get
\begin{equation}\label{a46}
\int\limits_{\mathcal{A}_{\alpha}^3} |\nabla u_{\alpha}|^{2\alpha}dx \leq CR_{\alpha}^{\frac{-N}{2}}+CR_{\alpha}^{\frac{2-N}{2}}
\leq CR_{\alpha}^{\frac{2-N}{2}}.
\end{equation}
\end{proof}

\subsection{Proof of Theorem \ref{rrr}}
Take a $t_n \in  [\bar{C} +2, \bar{C} +3]$, satisfying
\begin{align}\label{a47}
&\int\limits_{\partial B_{t_{\alpha}R^{-\frac{1}{2}}_{\alpha}}(y)} (R_{\alpha}^{-1}|u_{\alpha}|^{2^{\ast}}
+|u_{\alpha}|^2+R_{\alpha}^{-1}|\nabla u_{\alpha}|^{2\alpha})dS\nonumber \\
& \leq CR_{\alpha}^{\frac{1}{2}}\int\limits_{\mathcal{A}_{\alpha}^3} (R_{\alpha}^{-1}|u_{\alpha}|^{2^{\ast}}
+|u_{\alpha}|^2+R_{\alpha}^{-1}|\nabla u_{\alpha}|^{2\alpha})dS.
\end{align}
Using Proposition \ref{p36}, (\ref{a43}) and (\ref{a47}), we obtain
\begin{align}\label{a48}
&\int\limits_{\partial B_{t_{\alpha}R^{-\frac{1}{2}}_{\alpha}}(y)} (R_{\alpha}^{-1}|u_{\alpha}|^{2^{\ast}}
+|u_{\alpha}|^2+R_{\alpha}^{-1}|\nabla u_{\alpha}|^{2\alpha})dS
\leq CR_{\alpha}^{\frac{1}{2}-\frac{N}{2}}.
\end{align}

\begin{proof}[\textbf{Proof of Theorem \ref{rrr}}]
We have two different cases:
\begin{itemize}\addtolength{\itemsep}{-1.5 em} \setlength{\itemsep}{-5pt}
\item[(i)]  $B_{t_{\alpha}R^{-\frac{1}{2}}_{\alpha}}(x_{\alpha})\cap (\mathbb{R}^N\setminus \Omega)\not=\emptyset$;

\item[(ii)]  $B_{t_{\alpha}R^{-\frac{1}{2}}_{\alpha}}(x_{\alpha})\subset \Omega.$
\end{itemize}
 We have the following local Pohozaev identity for $u_{\alpha}$ on $B_{\alpha}=B_{t_{\alpha}R^{-\frac{1}{2}}_{\alpha}}(x_{\alpha})\cap \Omega$:
\begin{align}\label{a44}
  &\lambda \int\limits_{B_{\alpha}} u_{\alpha}^2dx+  \frac{N}{2}\int\limits_{B_{\alpha}} |\nabla u_{\alpha} |^2(1+|\nabla u_{\alpha}|^{\alpha-1})dx
 - \frac{N}{2\alpha}\int\limits_{B_{\alpha}} [(1+|\nabla u_{\alpha}|^2)^{\alpha}-1]dx  \nonumber \\
   & = \frac{1}{2^{\ast}}\int\limits_{\partial B_{\alpha}} |u_{\alpha}|^{2^{\ast}}(x-x_0)\cdot \nu dS
   +  \frac{\lambda}{2}\int\limits_{\partial B_{\alpha}} |u_{\alpha}|^{2}(x-x_0)\cdot \nu dS \nonumber \\
    &\ \ -\frac{1}{2\alpha}\int\limits_{\partial B_{\alpha}}[( 1+|\nabla u_{\alpha}|^{2})^{\alpha}-1] (x-x_0))\cdot  \nu dS
   +\frac{N}{2^{\ast}}\int\limits_{\partial B_{\alpha}}( 1+|\nabla u_{\alpha}|^{2})^{\alpha -1}  (\nabla u_{\alpha}\cdot \nu)u_{\alpha}dS\nonumber \\
    &\ \ +\int\limits_{\partial B_{\alpha}}( 1+|\nabla u_{\alpha}|^{2})^{\alpha -1} (\nabla u_{\alpha}\cdot(x-x_0))\cdot  (\nabla u_{\alpha}\cdot \nu) dS,
\end{align}
where $\nu$ is the outward normal to $\partial B_{\alpha}$. The point $x_0$ in (\ref{a44}) is chosen as follows. In case (i), we
take $x_0 \in  \mathbb{R}^N\setminus \Omega $ with $|x_0 -x_{\alpha}|\leq  2t_{\alpha} R^{-\frac{1}{2}}_{\alpha}$
 and $\nu \cdot(x -x_0) \leq 0 $ in $\partial\Omega \cap B_{\alpha}$. In case (ii), we take
a point $x_0= x_{\alpha}$.

By \eqref{app17}, we know that the first term in the left-hand side of (\ref{a44})  is non-negative. We thus obtain
from (\ref{a44}) that

\begin{align}\label{a49}
  \lambda \int\limits_{B_{\alpha}} u_{\alpha}^2dx&
  \leq  \frac{1}{2^{\ast}}\int\limits_{\partial B_{\alpha}} |u_{\alpha}|^{2^{\ast}}(x-x_0)\cdot \nu dS
   +  \frac{\lambda}{2}\int\limits_{\partial B_{\alpha}} |u_{\alpha}|^{2}(x-x_0)\cdot \nu dS \nonumber \\
    &\ \ -\frac{1}{2\alpha}\int\limits_{\partial B_{\alpha}}[( 1+|\nabla u_{\alpha}|^{2})^{\alpha}-1] (x-x_0))\cdot  \nu dS
   +\frac{N}{2^{\ast}}\int\limits_{\partial B_{\alpha}}( 1+|\nabla u_{\alpha}|^{2})^{\alpha -1}  (\nabla u_{\alpha}\cdot \nu)u_{\alpha}dS\nonumber \\
    &\ \ +\int\limits_{\partial B_{\alpha}}( 1+|\nabla u_{\alpha}|^{2})^{\alpha -1} (\nabla u_{\alpha}\cdot(x-x_0))\cdot  (\nabla u_{\alpha}\cdot \nu) dS.
\end{align}
Now we decompose $\partial B_{\alpha}$ into
$$\partial B_{\alpha}=\partial_i B_{\alpha}\cup \partial_e B_{\alpha},$$
where $\partial_i B_{\alpha}=\partial B_{\alpha}\cap \partial \Omega$ and $\partial_e B_{\alpha}=\partial B_{\alpha}\cap \partial\Omega$.

Noting $u_{\alpha}=0$ on $\partial\Omega$, we find
\begin{align}\label{a50}
&\frac{1}{2^{\ast}}\int\limits_{\partial_e B_{\alpha}} |u_{\alpha}|^{2^{\ast}}(x-x_0)\cdot \nu dS
   +  \frac{\lambda}{2}\int\limits_{\partial_e B_{\alpha}} |u_{\alpha}|^{2}(x-x_0)\cdot \nu dS \nonumber \\
    &\ \ -\frac{1}{2\alpha}\int\limits_{\partial_e B_{\alpha}}[( 1+|\nabla u_{\alpha}|^{2})^{\alpha}-1] (x-x_0))\cdot  \nu dS
   +\frac{N}{2^{\ast}}\int\limits_{\partial_e B_{\alpha}}( 1+|\nabla u_{\alpha}|^{2})^{\alpha -1}  (\nabla u_{\alpha}\cdot \nu)u_{\alpha}dS\nonumber \\
    &\ \ +\int\limits_{\partial_e B_{\alpha}}( 1+|\nabla u_{\alpha}|^{2})^{\alpha -1} (\nabla u_{\alpha}\cdot(x-x_0))\cdot  (\nabla u_{\alpha}\cdot \nu) dS\nonumber \\
    &=  -\frac{1}{2\alpha}\int\limits_{\partial_e B_{\alpha}}[( 1+|\nabla u_{\alpha}|^{2})^{\alpha}-1] (x-x_0))\cdot  \nu dS \nonumber \\
    &\ \ +\int\limits_{\partial_e B_{\alpha}}( 1+|\nabla u_{\alpha}|^{2})^{\alpha -1} (\nabla u_{\alpha}\cdot(x-x_0))\cdot  (\nabla u_{\alpha}\cdot \nu) dS\leq 0.
\end{align}
So, we can rewrite (\ref{a49}) as
\begin{align}\label{a51}
  \lambda \int\limits_{B_{\alpha}} u_{\alpha}^2dx&
  \leq  \frac{1}{2^{\ast}}\int\limits_{\partial_i B_{\alpha}} |u_{\alpha}|^{2^{\ast}}(x-x_0)\cdot \nu dS
   +  \frac{\lambda}{2}\int\limits_{\partial_i B_{\alpha}} |u_{\alpha}|^{2}(x-x_0)\cdot \nu dS \nonumber \\
    &\ \ -\frac{1}{2\alpha}\int\limits_{\partial_i B_{\alpha}}[( 1+|\nabla u_{\alpha}|^{2})^{\alpha}-1] (x-x_0))\cdot  \nu dS
   +\frac{N}{2^{\ast}}\int\limits_{\partial_i B_{\alpha}}( 1+|\nabla u_{\alpha}|^{2})^{\alpha -1}  (\nabla u_{\alpha}\cdot \nu)u_{\alpha}dS\nonumber \\
    &\ \ +\int\limits_{\partial_i B_{\alpha}}( 1+|\nabla u_{\alpha}|^{2})^{\alpha -1} (\nabla u_{\alpha}\cdot(x-x_0))\cdot  (\nabla u_{\alpha}\cdot \nu) dS,
  \   (\mbox{by}\  \eqref{app17}).
\end{align}

Using (\ref{a48}), noting that $|x_0 -x_{\alpha}|\leq  C R^{-\frac{1}{2}}_{\alpha}$ for
$\partial_i B_{\alpha}$, we see
\begin{align}\label{a52}
  RHS of (\ref{a51}) &\leq  CR^{-\frac{1}{2}}_{\alpha}\int\limits_{\partial_i B_{\alpha}}
  (|u_{\alpha}|^{2^{\ast}}+|u_{\alpha}|^{2}+|\nabla u_{\alpha}|^{2})dS
  + C\int\limits_{\partial_i B_{\alpha}} |\nabla u_{\alpha}||u_{\alpha}|dS\nonumber\\
  &\leq CR^{-\frac{N-2}{2}}_{\alpha}.
\end{align}
Recall that in the proof of Lemma \ref{l35}, we have the decomposition
  \begin{equation}\label{a53}
   u_\alpha=u+\sum_{j=1}^k\rho_{x_{\alpha,j},R_{\alpha,j}}(U_j)+\omega_\alpha:=u_0+u_{\alpha,1}+u_{\alpha,2},
  \end{equation}
with 	$\|u_{\alpha,2}\|\rightarrow 0$ as $\alpha\rightarrow 1$.  We easily find that if $N >4$,
\begin{equation}\label{a54.5}
  \int\limits_{\mathbb{R}^N} |U_j|^2dx<+\infty,\ j=1,\cdots,k.
\end{equation}

On the other hand, $B_{\alpha}'=B_{LR_{\alpha}^{-1}}(x_{\alpha})$, where $L>0$ is so large that
\begin{equation}\label{a54.6}
  \int\limits_{B_{L}(0)} |U_j|^2dx>0,\ j=1,\cdots,k.
\end{equation}
Since $u_{\alpha} = 0$ in $\mathbb{R}^N \setminus\Omega$, we have
\begin{align}\label{a54}
  \int\limits_{B_{\alpha}}
  |u_{\alpha}|^{2}dx &= \int\limits_{B_{t_{\alpha}R^{-\frac{1}{2}}_{\alpha}}(x_{\alpha})}|u_{\alpha}|^{2}dx\geq  \int\limits_{B'_{\alpha}}
  |u_{\alpha}|^{2}dx\nonumber\\
  &\geq \frac{1}{2}\int\limits_{B'_{\alpha}}|u_{\alpha,1}|^{2}dx
  -C\int\limits_{B'_{\alpha}}|u_{0}|^{2}dx-C\int\limits_{B'_{\alpha}}|u_{\alpha,2}|^{2}dx.
\end{align}
But
\begin{equation}\label{a55}
 C\int\limits_{B'_{\alpha}}|u|^{2}dx\leq CR_{\alpha}^{-N}=o(1)R_{\alpha}^{-2},
\end{equation}
and
\begin{equation}\label{a56}
  \int\limits_{B'_{\alpha}}|u_{\alpha,2}|^{2}dx\leq C\left(\int\limits_{B'_{\alpha}}|u_{\alpha,2}|^{2^{\ast}}dx\right)^{\frac{2}{2^{\ast}}}
 )R_{\alpha}^{-2}=o(1)R_{\alpha}^{-2}.
\end{equation}
Since $\|u_{\alpha,2}\|\rightarrow 0$ as $\alpha\rightarrow1$.

On the other hand, let us assume that $\rho_{x_{\alpha,1},R_{\alpha,1}}(U_1)$ is the bubble with slowest concentration
rate. Then
\begin{equation}\label{a57}
  \int\limits_{B'_{\alpha}}|u_{\alpha,1}|^{2}dx\geq \frac{1}{2} \int\limits_{B'_{\alpha}}|\rho_{x_{\alpha,1},R_{\alpha,1}}(U_1)|^2dx
  +O\left(\sum_{j=2}^k\int\limits_{B'_{\alpha}}|\rho_{x_{\alpha,j},R_{\alpha,j}}(U_j)|^2dx\right).
\end{equation}
Direct calculations implies that
$$ \int\limits_{B'_{\alpha}}|\rho_{x_{\alpha,1},R_{\alpha,1}}(U_1)|^2dx=
 R_{\alpha,1}^{-2}\int\limits_{B_{L}(0)}|U_1|^2dx\geq C_1R_{\alpha,1}^{-2},$$
for some constant $C_1>0$. Similarly,
\begin{equation}\label{a58}
\int\limits_{B'_{\alpha}}|\rho_{x_{\alpha,j},R_{\alpha,j}}(U_j)|^2dx=
 R_{\alpha,j}^{-2}\int\limits_{(B'_{\alpha})_{x_{\alpha,j},R_{\alpha,j}}}|U_j|^2dx.
\end{equation}
Here we use the notation $S_{x,R} = \{y: R^{-1}y + x \in  S\}$ for any set S.

If $\frac{R_{\alpha,j}}{R_{\alpha,1}}\rightarrow +\infty$, then we obtain from (\ref{a58}).
\begin{equation}\label{a59}
\int\limits_{B'_{\alpha}}|\rho_{x_{\alpha,j},R_{\alpha,j}}(U_j)|^2dx=
o\left(R_{\alpha,1}^{-2}\right).
\end{equation}

If $\frac{R_{\alpha,j}}{R_{\alpha,1}}\leq C<+\infty$, then

\begin{align}\label{a60}
  (B'_{\alpha})_{x_{\alpha,j},R_{\alpha,j}} &= \left\{y: R_{\alpha,j}^{-1}y + x_{\alpha,j} \in  B'_{\alpha}\right\}\nonumber\\
  & =\left\{y: \left|R_{\alpha,j}^{-1}y + x_{\alpha,j}- x_{\alpha,1}\right|\leq LR_{\alpha,1}^{-1} \right\} \subset
   \left\{y: \left|y + R_{\alpha,j}(x_{\alpha,j}- x_{\alpha,1})\right|\leq C\right\}.
\end{align}

Since $\left|R_{\alpha,j}(x_{\alpha,j}- x_{\alpha,1})\right|$ as $\alpha\rightarrow 1$,
we find that $(B'_{\alpha})_{x_{\alpha,j},R_{\alpha,j}} $ moves to infinity. So we
obtain from (\ref{a54.5} and (\ref{a58}) that
\begin{equation}\label{a61}
\int\limits_{B'_{\alpha}}|\rho_{x_{\alpha,j},R_{\alpha,j}}(U_j)|^2dx=
o\left(R_{\alpha,1}^{-2}\right).
\end{equation}
So, we have proved that there is a constant $C_1> 0$, such that
\begin{equation}\label{a62}
  \int\limits_{B'_{\alpha}}|u_{\alpha,1}|^{2}dx\geq C_1R_{\alpha}^{-2}.
\end{equation}
Therefore, by \eqref{a54},\eqref{a55},\eqref{a56} and \eqref{a62}, we have proved
\begin{align}\label{a63}
  LHS of (\ref{a51}) \geq \frac{C_1}{4}R^{-2}_{\alpha}.
\end{align}
From (\ref{a52}) and (\ref{a63}), we find

\begin{align}\label{a64}
 R^{-2}_{\alpha} \leq CR^{-\frac{N-2}{2}}_{\alpha}.
\end{align}
which is a contradiction if $N >6$.
\end{proof}

\begin{proof}[\textbf{Proof of Theorem \ref{rrr}}]
For any $k \in  \mathbb{N}$, define the $Z_2$-homotopy class $\mathcal{F}_k$ by
$$\mathcal{F}_k =\{A:  A \in H^{1,2\alpha}_0 (\Omega) \ \mbox{is} \ \mbox{compact},\ \mathbb{Z}_2-\mbox{invariant}, \mbox{and} \ \gamma (A) \geq k\},$$
where the genus $\gamma(A)$ is smallest integer $m$, such that there exists an odd map $\phi\in C(A, \mathbb{R}^m\setminus
\{0\})$.

For $k=1,2,\cdots,$ we can define the minimax value (see\cite[p.134]{gho} )

$$c_{k,\alpha}=\inf_{A\in \mathcal{F}_k} \max_{u\in A} I_{\alpha,\lambda}(u).$$

From Corollary 7.12 in \cite{gho}, for each small $\alpha >0$, $c_{k,\alpha}$ is a critical value of $I_{\alpha,\lambda}(u)$,
since $I_{\alpha,\lambda}(u)$
satisfies the Palais-Smale condition. Thus (1.3) has a solution $u_{k,\alpha}$  such that $I_{\alpha,\lambda}(u)=c_{k,\alpha}$.

 We fix a $\alpha_0<\frac{N}{N-2}$ and $\alpha<\frac{\alpha_0}{2}$.  Since $c_{k,\alpha}$  is is increasing in $\alpha>1$,
 we  obtain $c_{k,\alpha} \leq c_{k,\alpha_0}$.
So $c_{k,\alpha}$ is uniformly bounded for fixed $k$.

 From $I_{\alpha,\lambda}(u_{k,\alpha})= c_{k,\alpha}$ and
the equation satisfied by $c_{k,\alpha_0}$, we get
\begin{align}\label{a65}
\int\limits_{\Omega}u_{k,\alpha}^{2^{\ast}}dx&=\int\limits_{\Omega} (1+|\nabla u_{k,\alpha}|^2)^{\alpha-1}|\nabla u_{k,\alpha} |^2dx
-\lambda\int\limits_{\Omega}u_{k,\alpha}^2dx
\end{align}
and
\begin{align}\label{a66}
\frac{1}{2^{\ast}}\int\limits_{\Omega}|u_{k,\alpha}|^{2^{\ast}}dx+c_{k,\alpha}&=
\frac{1}{2\alpha}\int\limits_{\Omega} [(1+|\nabla u_{k,\alpha}|^2)^{\alpha}-1 ]dx
-\frac{\lambda}{2}\int\limits_{\Omega}u_{k,\alpha}^2dx.
\end{align}
So from (\ref{a65}) and (\ref{a66})  we have
\begin{align}\label{a67}
c_{k,0}> c_{k,\alpha}&=\left(\frac{1}{2\alpha}-\frac{1}{2^{\ast}}\right)\int\limits_{\Omega}|u_{k,\alpha}|^{2^{\ast}}dx
-\left(\frac{\lambda}{2}-\frac{\lambda}{2\alpha}\right)\int\limits_{\Omega}|u_{k,\alpha}|^{2}dx\nonumber\\
&\geq \left(\frac{1}{2\alpha}-\frac{1}{2^{\ast}}\right)\int\limits_{\Omega}|u_{k,\alpha}|^{2^{\ast}}dx
- o(|\alpha-1|)C\lambda\left(\int\limits_{\Omega}|u_{k,\alpha}|^{2^{\ast}}dx\right)^{\frac{2}{2^{\ast}}},
\end{align}
where $C$ depends on $|\Omega|$, $2\alpha$ and $N$ only.

Therefore there is a positive constant $C$ independent
of $N$ such that
$$\int\limits_{\Omega} |\nabla u_{k,\alpha}|^{2}dx\leq C
\int\limits_{\Omega} |\nabla u_{k,\alpha}|^{2\alpha}dx\leq C.$$

Thus 	$u_{k,\alpha}$	 is uniformly bounded with respect to $\alpha$. And the bubble $\rho_{x_{\alpha,j},R_{\alpha,j}}(U_j)$
does not appear in (\ref{a53}). So we have a subsequence of $\{u_{k,\alpha}\}$, such that, $u_{k,\alpha}\rightarrow u_k$ in $H^1_0(\Omega)$,
 and $c_{k,\alpha}\rightarrow c_k$ as $\alpha\rightarrow1$. Then $u_k$ is a critical point of $I_{1,\lambda}(u)$ and $I_{1,\lambda}(u_k)=c_k$.

We are now ready to show that $I_{1,\lambda}(u)$ has infinitely many critical points. Noting that $c_k$ is
non-decreasing in $k$, we have the following two cases:
\begin{itemize}\addtolength{\itemsep}{-1.5 em} \setlength{\itemsep}{-5pt}
\item[\textbf{Case I}.]There are $1 < k_1 < \cdots < k_i < \cdots$, satisfying $c_{k_1} < \cdots < c_{k_i} < \cdots$.

In this case,  $I_{1,\lambda}(u)$  has
infinitely many critical points  $u_i$ such that $I (u_i )= c_{k_i}$ .
\item[\textbf{Case II}.]There is a positive integer $m$ such that $c_k = c$ for all $k \geq  m$.
\end{itemize}
If for any $\delta > 0$, $I_{1,\lambda}(u)$  has a critical point $u$ with $I_{1,\lambda}(u) \in  (c -\delta, c +\delta)$
and $I_{1,\lambda}(u)\not=c$, then we
are done. So from now on we assume that there exists a $\delta >0$, such that $I_{1,\lambda}(u)$  has no critical point
$u$ with $I_{1,\lambda}(u) \in (c - \delta, c) \cup (c, c +\delta)$.

 Let
 $$K_c = \{u \in H^{1,2}_0(\Omega): I'_{1,\lambda}(u)= 0, I_{1,\lambda}(u)= c\}.$$
If   we can  prove  that
$$\gamma(K_c)\geq 2,$$ then $I_{1,\lambda}(u)$ has infinitely many
critical points.

Suppose, on the contrary, that $\gamma(K_c)= 1$. Take a small $\delta_1 > 0$, such that $\gamma(\mathcal{K})= 1$, where
$\mathcal{K}=\{u\in H_0^{1,2}(\Omega): \|u-K_{c}\|\leq \delta_1\}.$  Then $\gamma(\mathcal{K}\cap H^{1,2\alpha}_0(\Omega))\leq  1$.

Define
$$D_{\alpha}=\left(K_{\alpha}^{c+\delta}\setminus K_{\alpha}^{c-\delta}\right)\setminus (\mathcal{K}\cap H^{1,2\alpha}_0(\Omega)),$$
where
$$K_{\alpha}^{t}=\{u \in H^{1,2\alpha}_0(\Omega): I_{\alpha,\lambda}(u)\leq t\}.$$
We now claim that if $ \alpha -1> 0 $ is small, $I_{\alpha,\lambda}(u)$ has no critical point $u \in  D_{\alpha}$. Otherwise, suppose
that there are $\alpha\rightarrow 1$ and $u_n\in D_{\alpha}$ satisfying
$$I'_{\alpha,\lambda}(u_{\alpha})=0,\ u_{\alpha}\not\in \mathcal{K}\cap H^{1,2\alpha}_0(\Omega).$$
And the bubble $\rho_{x_{\alpha,j},R_{\alpha,j}}(U_j)$
does not appear in (\ref{a53}). So $u_{\alpha}$ (up to a subsequence) converges strongly to $u$ in $H_0^{1}(\Omega)$
as $\alpha\rightarrow 1$.  Then
$$I'_{1,\lambda}(u)=0,\  I_{1,\lambda}(u)\in (c-\delta, c+\delta),\ \ u\not\in\mathcal{K}.$$
This contradicts to the assumption.

So, for any $\varepsilon >0$ small, there exists a constant $c^{\ast}_{\alpha} > 0$, such that
$$\|I'_{1,\lambda}(u)\|\geq c^{\ast}_{\alpha} >0,\mbox{for\ all}\ u\in D_{\alpha}.$$
Standard techniques show that we can find an odd homeomorphism $\eta: H_0^{1,2\alpha}(\Omega)\rightarrow H_0^{1,2\alpha}(\Omega)$
such that
\begin{equation}\label{a68}
\eta(\left(K_{\alpha}^{c+\delta}\setminus  (\mathcal{K}\cap H^{1,2\alpha}_0(\Omega))\right)\subset K_{\alpha}^{c-\delta}.
\end{equation}
See for example the proof of Theorem 1.9 in \cite{rab}.


Fix $k>m$, Since $c_{k,\alpha}, c_{k+1,\alpha} \rightarrow c$ as $\alpha\rightarrow1$, we can find an $\alpha-1 >0$ small, such that
$$c_{k,\alpha}, c_{k+1,\alpha} \in \left(c-\frac{1}{4}\delta, c+\frac{1}{4}\delta\right).$$
By the definition of $c_{k+1,\alpha} $, we can find a set $A \in \mathcal{F}_{k+1}$, such that
$$I_{\alpha,\lambda}(u)<c_{k+1,\alpha}+\frac{1}{4}\delta<c+\delta,\ \ u\in A.$$
Hence, $A\subset K_{\alpha}^{c+\delta}$. From \eqref{a68}, $\tilde{A}=:\eta(A\setminus
(\mathcal{K}\cap H^{1,2\alpha}_0(\Omega)) )\subset K_{\alpha}^{c-\delta}$. That is
$$I_{\alpha,\lambda}(u)<c-\delta,\ \ u\in \tilde{A}.$$

On the other hand, if
$\gamma((\mathcal{K}\cap H^{1,2\alpha}_0(\Omega)))=0$, then
$\eta\left(K_{\alpha}^{c+\delta}\right)\subset K_{\alpha}^{c-\delta}$, which is contradiction with the definition of
$c_{k,\alpha}$.

If $\gamma((\mathcal{K}\cap H^{1,2\alpha}_0(\Omega)))=1$, by Lemma 3.32 of \cite{rab}, we find that
$A \setminus (\mathcal{K}\cap H^{1,2\alpha}_0(\Omega)) \subset \mathcal{F}_k$ . Using Theorem 1.9 in \cite{rab},
 we conclude$\tilde{A}\subset \mathcal{F}_k$ . As a result,

$$c_{k,\alpha}\leq \sup_{u\in \tilde{A}} I_{\alpha,\lambda}(u)<c-\delta.$$
This is a contradiction to $c_{k,\alpha} > c -\frac{1}{4}\delta$.

\end{proof}

 \appendix
\numberwithin{equation}{theorem}
  \section*{Appendix. Some estimates for solutions}
   \renewcommand\thetheorem{ A }
   In this section, we assume that $\Omega_1$ is a bounded domain in $\mathbb{R}^N$. We give some estimates for solutions of the equation $-\mbox{div}((1+|\nabla \cdot|^2)^{\alpha-1}\nabla \cdot)=f$.    These estimates  are very similar to the estimates of $p$-Laplacian  equation which were obtained in
 (see\cite{cao}). But   for the readers’ convenience,  we give the details of  these estimates.
\begin{proposition}\label{app1}
Let $ w \in H_0^{1,2\alpha} (\Omega_1) $ be the solution of
\begin{equation} \label{app2}
-\emph{div}((1+|\nabla w|^2)^{\alpha-1}\nabla w)=f(x)\ \ \ \mbox{in}\ \Omega_1.
\end{equation}
Suppose that $f \geq 0, f \in  L^{\infty}(\Omega_1)$. Then, for any
$\frac{N}{2\alpha}>q\geq 1$, there is a constant $C = C(q)$, such
that
$$\|w\|_{\frac{Nq(2\alpha-1)}{N-2\alpha q}}\leq C\|f\|_q^{\frac{1}{2\alpha-1}}.$$
\end{proposition}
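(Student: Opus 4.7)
The plan is to use a Moser-type argument: test the equation with the power test function $\psi = w|w|^{\beta-1}$ for a carefully chosen $\beta\ge 0$, bound the degenerate gradient term from below by the pure $|\nabla w|^{2\alpha}$ quantity, then rewrite the resulting integral as the $2\alpha$-Dirichlet energy of a power of $w$ and invoke the Sobolev embedding $W^{1,2\alpha}_0(\Omega_1)\hookrightarrow L^{(2\alpha)^{\ast}}(\Omega_1)$ with $(2\alpha)^{\ast}=\tfrac{2\alpha N}{N-2\alpha}$.

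More precisely, first I would use that $(1+t^2)^{\alpha-1}t^2\ge t^{2\alpha}$ for all $t\ge 0$ when $\alpha>1$ (easy case split at $t=1$). Testing \eqref{app2} with $w|w|^{\beta-1}$ yields
\begin{equation*}
\beta\int_{\Omega_1}|\nabla w|^{2\alpha}|w|^{\beta-1}\,dx
\le \int_{\Omega_1}(1+|\nabla w|^2)^{\alpha-1}|\nabla w|^2\,\beta |w|^{\beta-1}\,dx
=\int_{\Omega_1}f\,|w|^{\beta}\,dx.
\end{equation*}
Setting $s=(\beta+2\alpha-1)/(2\alpha)$, the chain rule gives $|\nabla(|w|^{s})|^{2\alpha}=s^{2\alpha}|w|^{\beta-1}|\nabla w|^{2\alpha}$, so that Sobolev yields
\begin{equation*}
\|w\|_{s(2\alpha)^{\ast}}^{2\alpha s}
=\bigl\||w|^{s}\bigr\|_{(2\alpha)^{\ast}}^{2\alpha}
\le C\int_{\Omega_1}|\nabla(|w|^{s})|^{2\alpha}dx
\le C\,\frac{s^{2\alpha}}{\beta}\int_{\Omega_1}f\,|w|^{\beta}\,dx,
\end{equation*}
and H\"older on the right-hand side gives $\int f|w|^{\beta}\le \|f\|_q\,\|w\|_{\beta q'}^{\beta}$.

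The second step is to tune $\beta$. Requiring the Lebesgue exponent on the two sides to match, namely $s(2\alpha)^{\ast}=\beta q'$, one solves to obtain
\begin{equation*}
\beta=\frac{(2\alpha-1)N(q-1)}{N-2\alpha q},\qquad
s(2\alpha)^{\ast}=\frac{Nq(2\alpha-1)}{N-2\alpha q},
\end{equation*}
which is nonnegative precisely when $1\le q<N/(2\alpha)$. Because $2\alpha s-\beta=2\alpha-1$, the previous inequality collapses to
\begin{equation*}
\|w\|_{s(2\alpha)^{\ast}}^{2\alpha-1}\le C\,\|f\|_q,
\end{equation*}
which is exactly the claim.

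A few technical points will need care. First, the test function $w|w|^{\beta-1}$ need not a priori lie in $H^{1,2\alpha}_0(\Omega_1)$; I would use the truncation $w\min(|w|,M)^{\beta-1}$, derive the inequality with a constant independent of $M$, and pass to the limit $M\to\infty$ using Fatou. Second, the borderline case $q=1$ ($\beta=0$) is not covered by the test function above; for this one I would either argue by density letting $q\downarrow 1$ and absorbing the $\|w\|_{\beta q'}^{\beta}$ factor, or invoke a Stampacchia level-set argument directly, which is the standard route for the endpoint. The main obstacle is ensuring the algebra of exponents is done correctly so that the Sobolev exponent on the left matches the $\beta q'$ on the right, and that the resulting $\beta$ satisfies $\beta\ge 0$ in the stated range of $q$; this is the linchpin that turns the Moser test into a one-step estimate rather than an iteration.
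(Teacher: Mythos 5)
Your proposal follows essentially the same route as the paper: test with a power of $w$, use $(1+t^2)^{\alpha-1}t^2\ge t^{2\alpha}$, invoke the Sobolev inequality for $|w|^s$, and tune the power so that the Lebesgue exponent produced by Sobolev matches the dual exponent coming from H\"older. Your exponent bookkeeping agrees with the paper's (your $\beta$ corresponds to the paper's $1+2\alpha(r-1)$, and your $s$ is the paper's $r$), so the core of the argument is right.

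There is, however, one technical point where your regularization is aimed at the wrong failure mode. You propose to replace $|w|^{\beta-1}$ by $\min(|w|,M)^{\beta-1}$ and let $M\to\infty$, which protects against unboundedness of $w$; but since $f\in L^\infty$ and $w$ is bounded, the actual obstruction to $w|w|^{\beta-1}$ being an admissible test function is the case $\beta<1$, which occurs for $q$ close to $1$. There $|w|^{\beta-1}$ is singular where $w=0$, and a cap from above does nothing to cure that. The paper instead uses the comparison principle to get $w\ge 0$, then tests with $\eta=w(w+\theta)^{2\alpha(r-1)}$ for $\theta>0$ (which damps the singularity at $w=0$), derives the identity for each fixed $\theta$, and passes to the limit $\theta\downarrow 0$ by Fatou to recover the desired one-sided inequality. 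You do flag that the borderline $\beta=0$ case needs a separate device, but the issue already appears for the whole range $0<\beta<1$; replacing your $\min(|w|,M)$ truncation by the additive $(w+\theta)$-regularization with Fatou, as in the paper, closes this gap.
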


\begin{proof}
We now prove that for $r>1-\frac{1}{2\alpha}$,
\begin{equation}\label{app5}
 \int\limits_{\Omega_1}(1+|\nabla w|^2)^{\alpha-1}\nabla w \nabla (w^{1+2\alpha(r-1)})dx\leq \int\limits_{\Omega_1}f(x)w^{1+2\alpha(r-1)}dx.
\end{equation}
Firstly, we assume $r\geq 1$ and $\eta=w^{1+2\alpha(r-1)}$. Then from
$$\nabla \eta=(1+2\alpha(r-1))w^{2\alpha(r-1)}\nabla w,$$
and $r\geq 1$,  it is easy to show that $\eta\in H_0^{1,2\alpha} (\Omega_1)$ since $w\in L^{\infty}(\Omega_1)$. So, we obtain
$$\int\limits_{\Omega_1}(1+|\nabla w|^2)^{\alpha-1}\nabla w \nabla (w^{1+2\alpha(r-1)})dx= \int\limits_{\Omega_1}f(x)w^{1+2\alpha(r-1)}dx.$$
Hence Proposition \ref{app1} holds.

Next we consider the case $r\in \left(1-\frac{1}{2\alpha},1\right)$. In this case, $w^{1+2\alpha(r-1)}$ may not be in $H_0^{1,2\alpha} (\Omega_1)$.
So we need to proceed differently. By the comparison principle, we know that $w\geq 0$ in $\Omega_1$. For any $\theta>0$ being a small number,
let $\eta=w(w+\theta)^{2\alpha(r-1)}$. Then $\eta\in H_0^{1,2\alpha} (\Omega_1)$, and
$$\nabla\eta=(w+\theta)^{2\alpha(r-1)}\nabla w+2\alpha(r-1)(w+\theta)^{2\alpha(r-1)-1}w\nabla w.$$
So we deduce
\begin{align}\label{app4}
& \int\limits_{\Omega_1}(1+|\nabla w|^2)^{\alpha-1}\nabla w \nabla \eta)dx\nonumber\\
&=\int\limits_{\Omega_1}(1+|\nabla w|^2)^{\alpha-1}|\nabla w|^2 ((w+\theta)^{2\alpha(r-1)}+2\alpha(r-1)(w+\theta)^{2\alpha(r-1)-1}w)dx\nonumber\\
&=\int\limits_{\Omega_1}f(x)w(w+\theta)^{2\alpha(r-1)}dx.
\end{align}

On the other hand, by Fatou's lemma,
\begin{align}\label{app6}
& (1+2\alpha(r-1))\int\limits_{\Omega_1} w^{2\alpha(r-1)}(1+|\nabla w|^2)^{\alpha-1}|\nabla w|^2dx \nonumber\\
& =\int\limits_{\Omega_1}\liminf_{\theta\rightarrow 0}(1+|\nabla w|^2)^{\alpha-1}\nabla w \nabla \eta)dx\nonumber\\
&=\int\limits_{\Omega_1}\liminf_{\theta\rightarrow 0}(1+|\nabla w|^2)^{\alpha-1}|\nabla w|^2 ((w+\theta)^{2\alpha(r-1)}+2\alpha(r-1)(w+\theta)^{2\alpha(r-1)-1}w)dx\nonumber\\
&\leq \liminf_{\theta\rightarrow 0}\int\limits_{\Omega_1}(1+|\nabla w|^2)^{\alpha-1}|\nabla w|^2 ((w+\theta)^{2\alpha(r-1)}+2\alpha(r-1)(w+\theta)^{2\alpha(r-1)-1}w)dx\nonumber\\
&=\liminf_{\theta\rightarrow 0}\int\limits_{\Omega_1}f(x)w(w+\theta)^{2\alpha(r-1)}dx dx\nonumber\\
&\leq \int\limits_{\Omega_1}f(x)w^{1+2\alpha(r-1)}dx.
\end{align}
Hence, \eqref{app5} holds.

From (\ref{app5}), by Sobolev embedding,  H\"{o}lder inequality and $\nabla w^r=rw^{r-1}\nabla w$,  we have
\begin{equation}\label{app7}
C\|w\|_{(2\alpha)^{\ast}r}^{2\alpha r}\leq \int\limits_{\Omega_1} f(x)w^{1+2\alpha(r-1)}dx\leq \|f\|_q\left(w^{1+2\alpha(r-1)\frac{q}{q-1}}\right)^{1-\frac{1}{q}},
\end{equation}
where $(2\alpha)^{\ast}=\frac{2N\alpha}{N-2\alpha}$.
Choose $r$, such that
$$(1+2\alpha(r-1))\frac{q}{q-1}=(2\alpha)^{\ast}r.$$
Then,
\begin{equation}\label{app9}
 r=\frac{q(2\alpha-1)}{2q\alpha-(2\alpha)^{\ast}(q-1)}.
\end{equation}
Note that $2q\alpha-(2\alpha)^{\ast}(q-1)>0$ since $q<\frac{N}{2\alpha}$. So, $r>0$ and
$$r>\frac{q(2\alpha-1)}{2q\alpha}=1-\frac{1}{2\alpha}.$$
For such $r$,
$$(2\alpha)^{\ast}r=\frac{(2\alpha)^{\ast}q(2\alpha-1)}{2\alpha q-(2\alpha)^{\ast}(q-1)}=\frac{Nq(2\alpha-1)}{N-2\alpha q}.$$
Thus, \eqref{app7} implies that
\begin{equation}\label{app8}
C\|w\|_{(2\alpha)^{\ast}r}^{2\alpha r-(2\alpha)^{\ast}r\left(1-\frac{1}{q}\right)}\leq \|f\|_q.
\end{equation}
Moreover, by \eqref{app9}, we know that
$$2\alpha r-(2\alpha)^{\ast}r\left(1-\frac{1}{q}\right)=2\alpha-1.$$
So, from the above discussion,  Proposition \ref{app1} holds.

\end{proof}

\begin{corollary}\label{app3}
Let $ w \in H_0^{1,2\alpha} (\Omega_1) $ be the solution of
\begin{equation} \label{app10}
-\emph{div}((1+|\nabla w|^2)^{\alpha-1}\nabla w)=a(x)v^{2\alpha-1}\ \ \ \mbox{in}\ \Omega_1,
\end{equation}
where   $a(x) \geq 0, v \geq 0$ are functions satisfying $a,v\in L^{\infty}(\Omega_1)$. Then, for any
$q>\left(1-\frac{1}{2\alpha}\right)\frac{2\alpha}{N-2\alpha}$, there is a constant $C = C(q)$, such
that
$$\|w\|_{q}\leq C\|a\|_{\frac{N}{2\alpha}}^{\frac{1}{2\alpha-1}}\|v\|_q.$$
\end{corollary}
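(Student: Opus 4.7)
The plan is to reduce Corollary~\ref{app3} to Proposition~\ref{app1} by choosing $f(x) = a(x)\, v(x)^{2\alpha-1}$ in \eqref{app2} and then applying H\"older's inequality to factor $\|f\|_{q_0}$ into $\|a\|_{N/(2\alpha)}$ times a power of $\|v\|_{q}$.

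First I would fix the target exponent $q$ and determine the exponent $q_0$ of Proposition~\ref{app1} by imposing
$$ \frac{N q_{0}(2\alpha-1)}{N - 2\alpha q_{0}} = q, $$
which solves to $q_{0} = qN / [N(2\alpha-1) + 2\alpha q]$. By H\"older's inequality with conjugate exponents $N/(2\alpha)$ and $s_{0}$ defined through $1/s_{0} = 1/q_{0} - 2\alpha/N$, one has
$$ \|a\, v^{2\alpha-1}\|_{q_{0}} \leq \|a\|_{N/(2\alpha)}\, \|v^{2\alpha-1}\|_{s_{0}} = \|a\|_{N/(2\alpha)}\, \|v\|_{(2\alpha-1)s_{0}}^{2\alpha-1}. $$
A short computation with the chosen $q_{0}$ yields $s_{0} = q/(2\alpha-1)$, hence $(2\alpha-1)s_{0} = q$. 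Substituting into the conclusion of Proposition~\ref{app1} and taking $(2\alpha-1)^{-1}$-powers produces the bound $\|w\|_{q} \leq C\,\|a\|_{N/(2\alpha)}^{1/(2\alpha-1)}\, \|v\|_{q}$.

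Finally, I would check the admissible range of $q$: the constraint $q_{0} < N/(2\alpha)$ is automatic for any $q>0$, while the requirement $q_{0} \geq 1$ of Proposition~\ref{app1} translates, after a short algebraic manipulation, into the lower bound on $q$ stated in the corollary. No step presents any genuine difficulty; the whole proof is essentially bookkeeping of H\"older exponents so that the three algebraic identities
$$ \frac{1}{s_{0}} = \frac{1}{q_{0}} - \frac{2\alpha}{N}, \qquad \frac{N q_{0}(2\alpha-1)}{N - 2\alpha q_{0}} = q, \qquad (2\alpha-1)s_{0} = q $$
are simultaneously consistent, which is a direct verification.
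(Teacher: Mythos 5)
Your argument is essentially identical to the paper's proof of this corollary: take $f = a\,v^{2\alpha-1}$, apply Proposition~\ref{app1} with the exponent $q_0$ chosen so that $\frac{Nq_0(2\alpha-1)}{N-2\alpha q_0}=q$, and split $\|a v^{2\alpha-1}\|_{q_0}$ by H\"older into $\|a\|_{N/(2\alpha)}\,\|v\|_{q}^{2\alpha-1}$; the exponent bookkeeping is right. One caveat, though, on the admissible range: you assert that $q_0\geq 1$ ``translates into the lower bound on $q$ stated in the corollary,'' but the actual computation gives $q_0\geq 1 \Leftrightarrow q\geq \frac{N(2\alpha-1)}{N-2\alpha} = \left(1-\frac{1}{2\alpha}\right)\frac{2\alpha N}{N-2\alpha}$, which is a factor of $N$ larger than the printed threshold $\left(1-\frac{1}{2\alpha}\right)\frac{2\alpha}{N-2\alpha}$. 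This is not a defect in your argument — the paper's own proof (via $q_1\in(1,N/(2\alpha))$) only establishes the corollary on the same larger range, and the stated threshold appears to be missing an $N$ in the numerator — but your claim that the range matches the statement as printed does not quite hold, and you should verify that step explicitly rather than asserting it.
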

 \begin{proof}
  For any $q_1\in \left(1,\frac{N}{2\alpha}\right)$, set $q=\frac{Nq_1(2\alpha-1)}{N-2\alpha q_1}$. We easily prove that
  $\frac{2N}{2\alpha}>q_1>1$ is equivalent to $\left(1-\frac{1}{2\alpha}\right)\frac{2\alpha}{N-2\alpha}<q<+\infty.$
  By Proposition\ref{app1}, there exists a constant $C(q_1) > 0$ such that
 \begin{equation}\label{app11}
   \|w\|_{q}\leq C(q_1)\|av^{2\alpha-1}\|_{\frac{N}{2\alpha}}^{\frac{1}{2\alpha-1}}.
 \end{equation}
On the other hand, from H\"{o}lder inequality we deduce
  \begin{equation}\label{app12}
\|av^{2\alpha-1}\|_{q_1}^{\frac{1}{2\alpha-1}}\leq \|a\|_{\frac{N}{2\alpha}}^{\frac{1}{2\alpha-1}}\|v\|_q.
  \end{equation}
  which, together with (\ref{app11}), completes our proof of Corollary \ref{app3}.
  \end{proof}
 \renewcommand\thetheorem{B }
 \begin{corollary}\label{app13}
Let $ w \in H_0^{1,2\alpha} (\Omega_1) $ be the solution of
\begin{equation} \label{app14}
-\emph{div}((1+|\nabla w|^2)^{\alpha-1}\nabla w)=a(x)v^{2\alpha-1}\ \ \ \mbox{in}\ \Omega_1,
\end{equation}
where   $a(x) \geq 0, v \geq 0$ are functions satisfying $a,v\in L^{\infty}(\Omega_1)$. Then, for any
$p_2\in \left(\frac{(2\alpha-1)N}{N-2\alpha}, (2\alpha)^{\ast}\right)$, there is a constant $C = C(p_2)$, such
that
$$\|w\|_{p_2}\leq C\|a\|_{r}^{\frac{1}{2\alpha-1}}\|v\|_{(2\alpha)^{2^\ast}},$$
where $r$ is determined by $\frac{1}{r}=\frac{2\alpha-1}{p_2}+\frac{2\alpha}{N}-\frac{2\alpha-1}{(2\alpha)^{\ast}}$.
\end{corollary}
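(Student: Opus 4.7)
My plan is to imitate the argument used for Corollary \ref{app3}, i.e.\ first apply Proposition \ref{app1} to the right-hand side $a(x)v^{2\alpha-1}$, and then factor this right-hand side via H\"older. The two exponents entering Proposition \ref{app1} and H\"older will be pinned down by matching to the target norms $\|w\|_{p_2}$, $\|a\|_r$, $\|v\|_{(2\alpha)^*}$ (where the exponent on $v$ in the statement appears to have a typo and I read it as $(2\alpha)^*$).

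Concretely, first I would choose $q_1$ so that Proposition \ref{app1} produces exactly the norm $\|w\|_{p_2}$ on the left. Since that proposition gives $\|w\|_{\frac{Nq_1(2\alpha-1)}{N-2\alpha q_1}}\le C\|av^{2\alpha-1}\|_{q_1}^{1/(2\alpha-1)}$, I set
\begin{equation*}
\frac{Nq_1(2\alpha-1)}{N-2\alpha q_1}=p_2,\qquad\text{equivalently}\qquad \frac{1}{q_1}=\frac{2\alpha-1}{p_2}+\frac{2\alpha}{N}.
\end{equation*}
The assumption $p_2>\frac{(2\alpha-1)N}{N-2\alpha}$ is exactly equivalent to $q_1>1$, and $p_2>0$ forces $q_1<N/(2\alpha)$, so Proposition \ref{app1} applies. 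The assumption $p_2<(2\alpha)^*$ will be used in the second step.

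Next, I would apply H\"older's inequality to $\|av^{2\alpha-1}\|_{q_1}$ with the conjugate pair $(r,s)$ chosen so that the $L^s$ norm of $v^{2\alpha-1}$ becomes a power of $\|v\|_{(2\alpha)^*}$. Taking $s=(2\alpha)^*/(2\alpha-1)$ gives $\|v^{2\alpha-1}\|_s=\|v\|_{(2\alpha)^*}^{2\alpha-1}$, and the H\"older conjugacy $\frac{1}{q_1}=\frac{1}{r}+\frac{1}{s}$ then yields
\begin{equation*}
\frac{1}{r}=\frac{1}{q_1}-\frac{2\alpha-1}{(2\alpha)^*}=\frac{2\alpha-1}{p_2}+\frac{2\alpha}{N}-\frac{2\alpha-1}{(2\alpha)^*},
\end{equation*}
which is exactly the $r$ appearing in the statement. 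The upper bound $p_2<(2\alpha)^*$ ensures $1/r>2\alpha/N>0$, so $r$ is admissible and $\|a\|_r$ is the natural norm. Combining the two steps yields
\begin{equation*}
\|w\|_{p_2}\le C\bigl(\|a\|_r\|v\|_{(2\alpha)^*}^{2\alpha-1}\bigr)^{1/(2\alpha-1)}=C\|a\|_r^{1/(2\alpha-1)}\|v\|_{(2\alpha)^*}.
\end{equation*}

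The proof is almost entirely algebraic bookkeeping; the only substantive input is Proposition \ref{app1}. The ``main obstacle'' is therefore not really analytic, but rather the verification that the interval $p_2\in\bigl(\tfrac{(2\alpha-1)N}{N-2\alpha},(2\alpha)^*\bigr)$ in the hypothesis is exactly the one needed to ensure that both $q_1\in(1,N/(2\alpha))$ (so Proposition \ref{app1} applies) and $r\ge 1$ (so H\"older applies), and that the resulting $r$ coincides with the formula in the statement. Once these identifications are made explicit, the estimate follows by concatenating the two inequalities above.
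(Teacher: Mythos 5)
Your proposal is correct and follows essentially the same path as the paper's proof: choose the intermediate exponent (which the paper calls $q_2$, you call $q_1$) so that Proposition \ref{app1} yields $\|w\|_{p_2}$ on the left, then split $\|av^{2\alpha-1}\|_{q_1}$ via H\"older with the same conjugate pair $(r,\,(2\alpha)^{\ast}/(2\alpha-1))$, arriving at the identical formula for $r$. The only cosmetic difference is that the paper records the hypothesis $\frac{(2\alpha-1)N}{N-2\alpha}<p_2<(2\alpha)^{\ast}$ as the single equivalent condition $1<q_2<\frac{2\alpha N}{N(2\alpha-1)+2\alpha}$, whereas you derive the two endpoints of the admissible range for $q_1$ (and for $r$) separately; both bookkeepings verify the same facts.
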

 \begin{proof}
By the assumption we know that $w\geq 0$. Set $p_2=\frac{Nq_2(2\alpha-1)}{N-2\alpha q_2}$ and choose $r>0$ such that
\begin{equation}\label{app15}
  \frac{1}{r}=\frac{2\alpha-1}{p_2}+\frac{2\alpha}{N}-\frac{2\alpha-1}{(2\alpha)^{2^{\ast}}},
\end{equation}
then $\frac{(2\alpha-1)rq_2}{r-q_2}=(2\alpha)^{\ast}$. It is easy to show  that $\frac{(2\alpha-1)N}{N-2\alpha}<p_2<(2\alpha)^{\ast}$
is equivalent to
$$1<q_2<\frac{N}{2\alpha+(N-2\alpha)\left(1-\frac{1}{2\alpha}\right)},$$
which implies that $1 < q_2 < \frac{N}{2\alpha}$. For such $p_2$ and $q_2 $ we can apply Proposition \ref{app1} to obtain a
constant $C(q_2) > 0$ such that
$$\|w\|_{p_2}\leq C(q_2)\|a v^{2\alpha-1}\|_{q_2}^{\frac{1}{2\alpha-1}},$$
On the other hand, from  H\"{o}lder inequality we get
  \begin{equation}\label{app16}
\|av^{2\alpha-1}\|_{q_2}^{\frac{1}{2\alpha-1}}\leq \|a\|_{r}^{\frac{1}{2\alpha-1}}\|v\|_{(2\alpha)^{\ast}}.
  \end{equation}
Hence, Corollary \ref{app13} holds.
 \end{proof}

\section*{Acknowledgments}
F. Fang was supported by  the funds from NSFC (No. 11626038).

\end{document}